\newtheorem*{Thm*}{Theorem}
\newtheorem{Thm}{Theorem}
\newtheorem{Prop}[Thm]{Proposition}
\newtheorem{Lem}[Thm]{Lemma}
\theoremstyle{definition}
\newtheorem{Def}[Thm]{Definition}
\theoremstyle{remark}
\newtheorem{Ex}[Thm]{Example}
\theoremstyle{definition}
\newcommand\scalemath[2]{\scalebox{#1}{\mbox{\ensuremath{\displaystyle #2}}}}
\DeclareMathOperator{\Hom}{Hom}
\DeclareMathOperator{\minconv}{\bf minconv}
\DeclareMathOperator{\maxconv}{\bf maxconv}
\DeclareMathOperator{\conv}{\bf conv}
\DeclareMathOperator{\Poly}{Poly}
\DeclareMathOperator{\dif}{dif}
\DeclareMathOperator{\vspan}{span}
\DeclareMathOperator{\sort}{sort}
\DeclareMathOperator{\Inv}{Inv}
\tikzset{arrowto/.style={decoration={markings, mark=at position .5 with {\arrow{>}}}, postaction={decorate}}}
\tikzset{arrowfrom/.style={decoration={markings, mark=at position .5 with {\arrow{<}}}, postaction={decorate}}}
\title{Non-elliptic Webs and Convex Sets in the Affine Building}
\author{Tair Akhmejanov}
\begin{document}

\begin{abstract}
We describe the $\mathfrak sl_3$ non-elliptic webs in terms of convex sets in the affine building. Kuperberg defined the non-elliptic web basis in his work on rank-$2$ spider categories. Fontaine, Kamnitzer, Kuperberg showed that the $\mathfrak sl_3$ non-elliptic webs are dual to CAT(0) triangulated diskoids in the affine building. We show that each such triangulated diskoid is the intersection of the min-convex and max-convex hulls of a generic polygon in the building. Choosing a generic polygon from each of the components of the Satake fiber produces (the duals of) the non-elliptic web basis. The convex hulls in the affine building were first introduced by Faltings and are related to tropical convexity, as discussed in work by Joswig, Sturmfels, Yu and by Zhang. 
\end{abstract}

\maketitle
\tableofcontents

\section{Introduction}

The representation theory of a Lie algebra or quantum group forms a pivotal tensor category that can be studied combinatorially via a diagrammatic presentation by generating morphisms and relations. One can restrict to the full subcategory with objects consisting of tensor products of the fundamental representations of $\mathfrak g$, the entire representation category being recovered from this subcategory by idempotent completion. The diagrammatic category corresponding to this full subcategory is called the \emph{spider category of $\mathfrak g$}. The morphisms are linear combinations of \emph{webs}, which are directed, planar graphs with trivalent interior vertices and univalent boundary vertices. They satisfy certain relations, which were completely specified in the $\mathfrak sl_m$ case in \cite{CKM}.

In \cite{Kup}, Kuperberg introduced the spider categories for rank-2 Lie algebras and gave presentations for these categories. Spider categories can be used to describe the invariant spaces $\Hom_{\mathfrak g}(\mathbb C,V_1\otimes\cdots\otimes V_n)\cong\Inv(V_1\otimes\cdots\otimes V_n)$ where each $V_i$ is a fundamental representation of $\mathfrak g$. Furthermore, each hom-space is equivalent to an invariant space due to the pivotal structure, and the spider categories were originally defined in \cite{Kup} entirely in terms of invariant spaces. From this perspective webs are thought of as embedded in a disk, and linear combinations of webs with boundary corresponding to $V_1,\ldots,V_n$ represent invariant vectors. In this rank-2 setting, Kuperberg specified a basis for each invariant space consisting of the \emph{non-elliptic webs} (see \S \ref{sec:webs}). They are the webs such that each interior face has at least 6 sides. We will be concerned with the non-elliptic webs in the $\mathfrak g=\mathfrak sl_3$ case.

Fontaine, Kamnitzer, Kuperberg \cite{FKK} showed that the $\mathfrak sl_3$ non-elliptic webs are dual to CAT(0) triangulated diskoids in the affine building. That is, since webs are trivalent graphs with boundary vertices on a disk, each web is dual to a triangulated polygon with edges labelled by the two fundamental weights of $\mathfrak sl_3$. If the web has multiple connected components, then certain boundary vertices of the polygon are identified, resulting in a triangulated diskoid. The non-elliptic property corresponds to the CAT(0) property of the triangulated diskoid, which says that each interior vertex has degree at least 6 (see Figure~\ref{fig:dual examples}).

This paper gives a description of these CAT(0) triangulated dual diskoids in terms of convex sets in the affine building. To state the main theorem we briefly introduce the $A_2$ affine building and define the convex sets in the building (precise definitions are given \S\ref{sec:building}).

The $A_2$ affine building $\Delta_2$ is an infinite simplicial complex of dimension $2$. A vertex in $\Delta_2$ is represented by a $\mathbb C[[t]]$-lattice in $\mathbb C((t))^3$ (more precisely, a homothety class of lattices where two lattices $L,L'$ are in the same class if $L'=t^kL$ for some integer $k$). Let $[L]$ denote the class of the lattice $L$. Two vertices of $\Delta_2$ are connected by an edge if there are lattice representatives $L,L'$ for their lattice classes such that $tL\subset L'\subset L$. Three vertices form a simplex if each pair forms an edge. There is an $SL_3$-dominant-weight-valued metric on the vertices of the building in the sense of Kapovich, Leeb, Milson \cite{KLM}. Let $\omega_1$ and $\omega_2$ be the two fundamental weights of $SL_3$ and let $V_{\omega_i}$ denote the corresponding fundamental representation. Note that two vertices $x_1,x_2$ in $\Delta_2$ form an edge if and only if $d(x_1,x_2)=\omega_1$ or $\omega_2$.

The following notion of convexity in the affine building is originally due to Faltings \cite{Fal}. Note that the intersection of two lattices is again a lattice. A set of lattice classes is \emph{min-convex} if it is closed under taking intersections. Then define the convex hull $\minconv([L_1],\ldots,[L_n])$ to be the smallest min-convex set containing the lattice classes $[L_1],\ldots,[L_n]$. There is a dual notion of max-convexity where intersection of lattices is replaced by taking their sum. Define $\maxconv([L_1],\ldots,[L_n])$ analogously. Joswig, Sturmfels, Yu \cite{JSY} gave an algorithm for computing these convex hulls in terms of tropical convexity, which was improved upon by Zhang \cite{Zha}. 

Finally, define the following notation for the intersection of these two convex hulls,
\begin{align*}
\conv([L_1],\ldots,[L_n])=\minconv([L_1],\ldots,[L_n])\cap \maxconv([L_1],\ldots,[L_n]).
\end{align*}

Let $\vec\lambda=(\lambda^1,\ldots,\lambda^n)$ be a sequence of fundamental weights. We will be interested in the space of polygon configurations $\Poly(\vec\lambda)$ consisting of $n$ vertices $(x_1,\ldots,x_n)$ in $\Delta_2$ such that $d(x_i,x_{i+1})=\lambda^i, d(x_n,x_1)=\lambda^n$ and $x_1$ is the class of the standard lattice $\mathbb C[[t]]^3$. This forms an algebraic variety, and by the geometric Satake correspondence, which we briefly recall below in this introduction (and in \S\ref{sec:grassmannian}), the number of components of the polygon space is equal to the dimension of $\Inv(\vec\lambda)=\Inv(V_{\lambda^1}\otimes\cdots\otimes V_{\lambda^n})$.

Let $P=(x_1,\ldots,x_n)$ be a polygon representing a point of $\Poly(\vec\lambda)$. Although $\conv(P)$ denotes a set of vertices in the affine building, we will interpret it as the induced subcomplex on those vertices. The main theorem is Theorem \ref{thm:main}, reproduced here.

\begin{Thm*}
Let $P$ be a generic point of a component of $\Poly(\vec\lambda)$, thought of as a polygon in the affine building $\Delta_2$. Then $\conv(P)$ is a CAT(0) triangulated diskoid, that is, the dual of $\conv(P)$ is a non-elliptic web. As $P$ ranges over the components of $\Poly(\vec\lambda)$, the duals of $\conv(P)$ form the non-elliptic web basis for the invariant space $\Inv(\vec\lambda)$.
\end{Thm*}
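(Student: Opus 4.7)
The plan is to combine the geometric Satake correspondence with the Fontaine--Kamnitzer--Kuperberg theorem that identifies non-elliptic webs with CAT(0) triangulated diskoids in $\Delta_2$ having a prescribed boundary polygon. Since the components of $\Poly(\vec\lambda)$ are indexed by a basis of $\Inv(\vec\lambda)$ whose cardinality matches the number of non-elliptic webs with the given boundary, it suffices to exhibit a well-defined assignment $P\mapsto\conv(P)$ from generic polygons to CAT(0) triangulated diskoids with boundary $P$, and to check that this map is constant on components and realizes every such diskoid.

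First I would analyze $\conv(P)$ locally in apartments. Using the tropical-convexity description of $\minconv$ and $\maxconv$ from \cite{JSY,Zha}, one can compute the two hulls in any single apartment containing a portion of $P$ as explicit tropical polytopes. The base case is $n=3$, where a generic triangle in an apartment satisfies $\minconv(P)\cap\maxconv(P) = P$ filled in by the unique 2-simplex (or a small triangulated region depending on the weights), establishing the basic building block. For general $P$, the containment $\conv(P)\subset\minconv(P)\cap\maxconv(P)$ forces any adjacent pair of vertices of $\conv(P)$ to lie in a common apartment, and genericity prevents the coincidences that would obstruct the 2-simplices from filling in to give a triangulated diskoid with boundary $P$.

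The main obstacle will be verifying the CAT(0) property: every interior vertex $v$ of $\conv(P)$ has degree at least $6$. The idea is to show that a degree-less-than-$6$ interior vertex provides a lattice-theoretic certificate against minimality. Concretely, at such a $v$ the link in $\conv(P)$ would contain a short geodesic, which translates to a pair of lattices whose intersection (respectively sum) is not already present in $\conv(P)$, contradicting closure under the $\minconv$ or $\maxconv$ operations. The generic hypothesis is essential here — at non-generic polygons extra lattice classes can appear in the convex hulls (coming from coincidences in the tropical computation), producing low-degree interior vertices and thus spoiling CAT(0).

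Finally, to pass from a single polygon to components, I would argue that the combinatorial type of $\conv(P)$ is constant on the open dense locus of each irreducible component of $\Poly(\vec\lambda)$: semicontinuity of the relevant tropical constructions makes $\conv$ combinatorially rigid on a dense open, and generic polygons in the same component produce the same triangulated diskoid. Distinct components give distinct diskoids because the resulting webs define distinct Satake cycles under the geometric Satake correspondence recalled in \S\ref{sec:grassmannian}, so the assignment is injective on components; a count against the FKK bijection forces surjectivity onto the set of non-elliptic webs. Dualizing then identifies the collection $\{\conv(P)\}_P$ with Kuperberg's non-elliptic web basis of $\Inv(\vec\lambda)$.
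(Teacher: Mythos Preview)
Your proposal sketches a plausible strategy but has two genuine gaps.

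First, the heart of the theorem is that $\conv(P)$ is a triangulated diskoid with boundary $P$ at all, and your argument for this is essentially a promissory note. You write that ``genericity prevents the coincidences that would obstruct the 2-simplices from filling in,'' but this is precisely what requires proof. The paper observes in the introduction that already in rank $3$ there are generic polygons for which $\conv(P)=P$ is trivial and contains no triangulation whatsoever; so something specific to rank $2$ is doing real work, and invoking tropical convexity in apartments does not by itself isolate it. The paper's proof is an induction on $n$ via three explicit local reduction moves (U-turns, sharp corners, and a double-elbow move replacing $[L_2]$ by the unique fourth vertex $[L_2']$ adjacent to $[L_1],[L_2],[L_3]$). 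Each move is checked, via the growth-diagram description of genericity (Proposition~\ref{prop:difs}) and the pairwise description of $\conv$ for paths (Proposition~\ref{prop:conv}), to preserve genericity and to change $\conv$ by exactly one vertex. This is where the $A_2$ combinatorics enters, and there is no indication how a single tropical computation would replace it.

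Second, your CAT(0) argument is incorrect as stated. You claim a degree-$<6$ interior vertex would yield lattices whose intersection or sum lies outside $\conv(P)$, ``contradicting closure under the $\minconv$ or $\maxconv$ operations.'' But $\conv(P)=\minconv(P)\cap\maxconv(P)$ is closed under \emph{neither} operation individually: an intersection of two lattices in $\conv(P)$ lies in $\minconv(P)$ but need not lie in $\maxconv(P)$, and dually for sums, so no contradiction arises. The correct argument (which the paper uses) is that once $\conv(P)$ is known to be a triangulated diskoid embedded in $\Delta_2$, CAT(0) is automatic because the link of any vertex in the affine building has girth $6$. Your distinctness argument is also circular: you need an independent reason that different components yield different diskoids, and the paper supplies one by showing $\conv(P)$ contains a combinatorial geodesic between every pair $[L_i],[L_j]$, hence encodes the full distance array, which separates components by Theorem~\ref{Thm:components}.
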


The main combinatorial tool used in the proof is the cylindrical growth diagram of \cite{Spe, Whi, Akh}. The definition is given in \S\ref{sec:grassmannian} together with the result from \cite{FK,Akh} that the components of $\Poly(\vec\lambda)$ can be described combinatorially via growth diagrams. A polygon $P$ is said to be generic if its pairwise distances $d(x_i,x_j)$ form a growth diagram.

Intuitively, the set of vertices $\minconv(P)$ contains certain combinatorial geodesics between pairs of vertices of $P$. The subcomplex it induces contains the desired CAT(0) triangulation of $P$, but may also contain extra simplices. The same holds for $\maxconv(P)$. Taking the intersection of these two sets of vertices induces a subcomplex that is exactly equal to the CAT(0) triangulation of $P$. The reader may look ahead to the figures in Example \ref{ex:octagon}.

The theorem can also be interpreted as giving a bijective map from components of $\Poly(\vec\lambda)$ to non-elliptic webs: choose a generic point in the component, compute $\conv(P)$, and take the dual of this triangulated diskoid. However, we do not explore computational procedures for computing these convex hulls in the present paper, but refer the reader to \cite{JSY} and \cite{Zha}. Such a bijection was already given in \cite[Thm 1.4]{FKK}, or more completely, we have the following objects in bijective correspondence for a fixed sequence of fundamental weights $\vec\lambda$.
\begin{itemize}
\item
non-elliptic webs with boundary $\vec\lambda$
\item
CAT(0) triangulated dual-diskoids with boundary $\vec\lambda$
\item
components of $\Poly(\vec\lambda)$
\end{itemize}

Fontaine, Kamnitzer, Kuperberg also studied the relation between the non-elliptic web basis and the Satake basis given by the components of $\Poly(\vec\lambda)$. They showed that for each invariant space the transition matrix between the bases is unitriangular. To briefly recall, the geometric Satake correspondence of Lusztig \cite{Lus}, Ginzburg \cite{Gin}, Beilinson--Drinfeld \cite{BD}, and Mirkovi\'c-Vilonen \cite{MV} describes the representation theory of $G$ in terms of the affine Grassmannian $Gr_{G^L}$ of the Langlands dual group $G^L$ (see \S\ref{sec:grassmannian} for precise definitions). In the present case, $G=SL_3$, $G^L=PGL_3$, and $Gr_{G^L}=PGL_3(\mathbb C((t)))/PGL_3(\mathbb C[[t]])$. Note that, as a set, $PGL_3(\mathbb C((t)))/PGL_3(\mathbb C[[t]])$ is the set of homothety classes of lattices, equivalently the vertices of $\Delta_2$. Under this correspondence, the invariant space $\Inv(V_{\lambda^1}\otimes \cdots\otimes V_{\lambda^n})$ is isomorphic to $H_{top}(\Poly(\vec\lambda))$, the top Borel--Moore homology of the polygon configuration space. The polygon space, also known as the Satake fiber, is a reducible variety whose top components in Borel--Moore homology form the Satake basis for $\Inv(\vec\lambda)$. 

The non-elliptic webs have the nice property that they are minimal with respect to the number of interior vertices. In higher rank, there is no known method for generating basis webs that is preserved under rotation and the resulting webs minimal under some natural ordering. However, there are known ways for generating bases, as given in \cite{Fon} and \cite{CKM} (see also \cite{Wes} and \cite{Hag}). A better understanding of the connection to convexity may shed light on good sets of bases in higher rank, which was one of the motivations for establishing this interpretation in rank $2$.

Part of the content of the main theorem is that $\conv(P)$ contains a triangulation of $P$ at all. In fact, already in rank $3$ there are generic polygons $P$ such that $\conv(P)$ is trivial in the sense that $\conv(P)=P$. Hence, the subcomplex induced by $\conv(P)$ does not contain a triangulation of $P$. Such polygon configurations appear to be related to Morrison's Kekul\'e relations \cite{Mor}. 

We remark that the propositions in \S5 leading up to the proof of the main theorem provide a growth algorithm that, given a row-strict semistandard rectangular tableau as input, produces a non-elliptic web. It follows from the geometric results that this algorithm intertwines Sch\"utzenberger promotion on tableaux and rotation of webs. A growth algorithm yielding non-elliptic webs was also given by Kuperberg and Khovanov in \cite{KK}. That promotion on tableaux corresponds to rotation of webs under the Khovanov--Kuperberg algorithm was shown in \cite[Thm~2.5]{PPR}.    

Finally, we mention in passing that the geometry of point configurations in affine buildings is also related to higher laminations \cite{FG, Le1}. In \cite{Le2}, Le described higher laminations as positive configurations of points and interpreted the duality pairing of Fock--Goncharov in terms of length-minimal weighted networks. See \cite{Le1, Le2, LO}.

The paper is organized as follows. In \S\ref{sec:webs} we define non-elliptic webs and triangulated dual diskoids. In \S\ref{sec:building} we define the affine building, the convex sets, and prove a property of $\conv$ in this rank-2 setting. In \S\ref{sec:grassmannian} we define the polygon configuration space of points in the building and recall the result in \cite{FK,Akh} that describes the components in terms of cylindrical growth diagrams. In \S\ref{sec:proof} we give a proof of the main theorem using the geometry of the building, together with growth diagrams. It is the most involved section.

\subsection*{Acknowledgements}
I would like to thank Greg Kuperberg for helpful discussions and Leon Zhang for bringing convexity in affine buildings to my attention.

\section{$A_2$ Webs}\label{sec:webs}
The $A_2$ webs are defined as follows in \cite{Kup}.
\begin{Def}
A \emph{web} is a planar directed graph embedded in a disk (up to isotopy) with no multiple edges such that 
\begin{enumerate}
\item
each interior vertex is trivalent with all adjacent edges either pointing towards the vertex or away from the vertex
\item
each vertex on the boundary of the disk is univalent.
\end{enumerate}
A web is called \emph{non-elliptic} if every internal face contains at least $6$ sides.
\end{Def}

\begin{figure}
\centering
\begin{subfigure}[b]{.45\textwidth}
\center
\begin{tikzpicture}
\draw[decoration={markings, mark=at position .5 with {\arrow{>}}}, postaction={decorate}] 
(0,1) -- (0,0);
\end{tikzpicture}
\hspace{2mm}
\begin{tikzpicture}
\draw[decoration={markings, mark=at position .5 with {\arrow{>}}}, postaction={decorate}]
(0,0) -- (.5,.5);
\draw[decoration={markings, mark=at position .5 with {\arrow{>}}}, postaction={decorate}]
(1,0) -- (.5,.5);
\draw[decoration={markings, mark=at position .5 with {\arrow{>}}}, postaction={decorate}]
(.5,1) -- (.5,.5);
\end{tikzpicture}
\hspace{2mm}
\begin{tikzpicture}
\draw[decoration={markings, mark=at position .5 with {\arrow{>}}}, postaction={decorate}]
(.5,.5) -- (0,0);
\draw[decoration={markings, mark=at position .5 with {\arrow{>}}}, postaction={decorate}]
(.5,.5) -- (1,0);
\draw[decoration={markings, mark=at position .5 with {\arrow{>}}}, postaction={decorate}]
(.5,.5) -- (.5,1);
\end{tikzpicture}
\caption{Generators for the $\mathfrak{sl}_3$ spider. \label{fig:generators}}
\end{subfigure}
\begin{subfigure}[b]{.45\textwidth}
\center
\begin{align*}
\begin{tikzpicture}[baseline=-.5ex]
\draw[decoration={markings, mark=at position 0 with {\arrow{>}}}, postaction={decorate}] 
(0,0) circle (.4);
\end{tikzpicture} \;&=\;3\\
\begin{tikzpicture}[baseline=-.5ex]
\draw[decoration={markings, mark=at position .5 with {\arrow{>}}}, postaction={decorate}] 
(.5,0) -- (1,0);
\draw[decoration={markings, mark=at position .5 with {\arrow{>}}}, postaction={decorate}] 
(2,0) -- (2.5,0);
\draw[decoration={markings, mark=at position .5 with {\arrow{>}}}, postaction={decorate}] 
(2,0) arc (30:150:.58);
\draw[decoration={markings, mark=at position .5 with {\arrow{>}}}, postaction={decorate}] 
(2,0) arc (-30:-150:.58);
\end{tikzpicture} \;&=\;-2\; 
\begin{tikzpicture}[baseline=-.5ex]
\draw[decoration={markings, mark=at position .5 with {\arrow{>}}}, postaction={decorate}] 
(0,0) -- (1,0);
\end{tikzpicture}\\
\begin{tikzpicture}[baseline=-3.5ex]
\draw[decoration={markings, mark=at position .5 with {\arrow{>}}}, postaction={decorate}] 
(0,0) -- (1,0);
\draw[decoration={markings, mark=at position .5 with {\arrow{>}}}, postaction={decorate}] 
(1,-1) -- (1,0);
\draw[decoration={markings, mark=at position .5 with {\arrow{>}}}, postaction={decorate}] 
(0,0) -- (0,-1);
\draw[decoration={markings, mark=at position .5 with {\arrow{>}}}, postaction={decorate}] 
(1,-1) -- (0,-1);
\draw[decoration={markings, mark=at position .5 with {\arrow{>}}}, postaction={decorate}] 
(0,0) -- (-.5,.5) ;
\draw[decoration={markings, mark=at position .5 with {\arrow{>}}}, postaction={decorate}] 
(1.5,.5) -- (1,0);
\draw[decoration={markings, mark=at position .5 with {\arrow{>}}}, postaction={decorate}] 
(-.5,-1.5) -- (0,-1);
\draw[decoration={markings, mark=at position .5 with {\arrow{>}}}, postaction={decorate}] 
(1,-1) -- (1.5,-1.5);
\end{tikzpicture}\; &=\; 
\begin{tikzpicture}[baseline=-3.5ex]
\draw[decoration={markings, mark=at position .5 with {\arrow{>}}}, postaction={decorate}] 
(0,-1) arc (150:30:.58);
\draw[decoration={markings, mark=at position .5 with {\arrow{>}}}, postaction={decorate}] 
(1,0) arc (-30:-150:.58);
\end{tikzpicture}\;+\;
\begin{tikzpicture}[baseline=-3.5ex]
\draw[decoration={markings, mark=at position .5 with {\arrow{>}}}, postaction={decorate}] 
(1,0) arc (135:225:.707);
\draw[decoration={markings, mark=at position .5 with {\arrow{>}}}, postaction={decorate}] 
(0,-1) arc (-45:45:.707);
\end{tikzpicture}
\end{align*}
\caption{Relations for the $\mathfrak{sl}_3$ spider.\label{fig:relations}}
\end{subfigure}
\caption{Kuperberg's presentation of the $\mathfrak{sl}_3$ spider. \label{fig:spider}}
\end{figure}
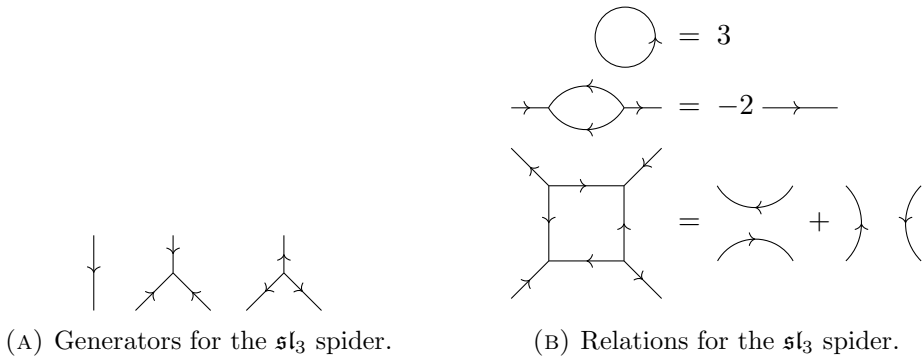

Fix a sequence of fundamental weights $\vec\lambda=(\lambda^1,\ldots,\lambda^n)$. One can view the spider category of $\mathfrak sl_3$ as being generated by two types of trivalent vertices and a directed strand, as shown in Figure \ref{fig:generators}. It is a fact that any invariant vector in the tensor product representation 
\begin{align*}
V_{\lambda^1}\otimes \cdots \otimes V_{\lambda^n}
\end{align*}
can be interpreted as a linear combination of webs with $i$th boundary vertex incident to an edge pointing away from the boundary if $\lambda^i=\omega_1$ and towards the boundary if $\lambda^i=\omega_2$. We say that the \emph{boundary} or \emph{type} of the web is $\vec\lambda$. Any web can be reduced to a linear combination of non-elliptic webs by applying the reduction relations shown in Figure \ref{fig:relations}. Hence, the non-elliptic webs with boundary $\vec\lambda$ span the invariant space $\Inv(\vec\lambda)=\Inv(V_{\lambda^1}\otimes\cdots\otimes V_{\lambda^n})$. Kuperberg \cite{Kup} showed that the number of non-elliptic webs is equal to $\dim\left(\Inv(\vec\lambda)\right)$, so they form a basis for the invariant space.

As mentioned in the introduction, there is a dual picture to webs, that of a diskoid, as defined in \cite{FKK}. It is the graphical dual of a web considered as a planar graph. More precisely, given a web, the dual graph has a vertex for each connected component of the disk. Connect vertices corresponding to neighboring components with a directed edge such that the direction is given by rotating counterclockwise the direction of the corresponding web edge. A web with $n$ external vertices partitions the boundary of the disk into $n$ arcs. Hence, the result of the dualizing procedure can be thought of as a triangulated $n$-gon with directed edges. Note that some of the $n$-gon vertices may be identified if their corresponding arcs are contained in the same connected component of the disk, resulting in a triangulated diskoid. See the example in Figure \ref{fig:dual examples}. A diskoid is CAT(0) if every interior vertex has degree at least $6$. Non-elliptic webs correspond precisely to CAT(0) diskoids. 

Consider a path from vertex $x$ to vertex $y$ in a diskoid that agrees with the orientation on $a$ of its edges and disagrees on $b$ of its edges. The dominant-weight-valued length of this path from $x$ to $y$ is defined to be $a\omega_1+b\omega_2$. Paths in the dual diskoid are dual to cut paths in the corresponding web as defined in \cite{Kup}.

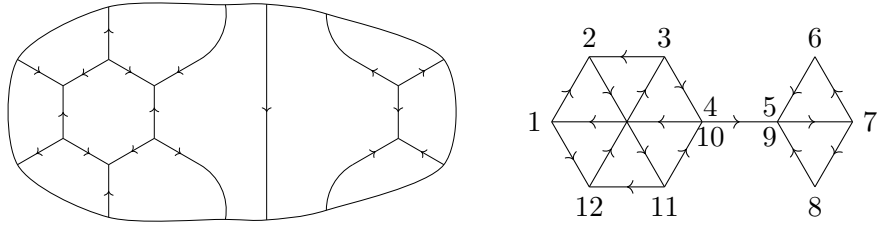
\begin{figure}
\scalemath{.7}{
\begin{tikzpicture}
\tikzmath{\a=.866;}
\draw[arrowto] (-\a,-.5) -- (-\a,.5);
\draw[arrowfrom] (-\a,.5) -- (0,1);
\draw[arrowto] (0,1) -- (\a,.5);
\draw[arrowfrom] (\a,.5) -- (\a,-.5);
\draw[arrowto] (\a,-.5) -- (0,-1);
\draw[arrowfrom] (0,-1) -- (-\a,-.5);

\draw[arrowfrom] (-2*\a,-1) -- (-\a,-.5);
\draw[arrowto] (-2*\a,1) -- (-\a,.5);
\draw[arrowfrom] (0,2) -- (0,1);
\draw[arrowfrom] (2*\a,-1) -- (\a,-.5);
\draw[arrowto] (2*\a,1) -- (\a,.5);
\draw[arrowto] (0,-2) -- (0,-1);

\draw (2*\a,1) arc (-60:10:1) coordinate (A);
\draw (2*\a,-1) arc (60:-10:1) coordinate(B);

\draw[arrowto] (3,2.05) -- (3,-2.05);

\tikzmath{\b=5.5;}
\draw[arrowto] (\b,.5) -- (\b,-.5);
\draw[arrowfrom] (\b-\a,1) -- (\b,.5);
\draw[arrowfrom] (\b+\a,1) -- (\b,.5);
\draw[arrowto] (\b-\a,-1) -- (\b,-.5);
\draw[arrowto] (\b+\a,-1) -- (\b,-.5);

\draw (\b-\a,1) arc (240:180:1) coordinate(C);
\draw (\b-\a,-1) arc (120:180:1) coordinate(F);

\draw plot [smooth cycle] coordinates {(-2*\a,1) (0,2) (A) (3,2.05) (C) (\b+\a,1) (\b+\a,-1) (F) (3,-2.05) (B) (0,-2) (-2*\a,-1)};
\end{tikzpicture}}
\hspace{5mm}
\begin{tikzpicture}
\tikzmath{\a=.866;}
\tikzmath{\a=.866;}
\draw[arrowto] (0,0) -- (-1,0);
\draw[arrowfrom] (0,0) -- (1,0);
\draw[arrowto] (0,0) -- (.5,\a);
\draw[arrowto] (0,0) -- (.5,-\a) coordinate(A);
\draw[arrowfrom] (0,0) -- (-.5,\a);
\draw[arrowfrom] (0,0) -- (-.5,-\a) coordinate(B);

\draw[arrowto] (-1,0) -- (-.5,\a);
\draw[arrowfrom] (-.5,\a) -- (.5,\a);
\draw[arrowto] (.5,\a) -- (1,0);
\draw[arrowfrom] (1,0) -- (.5,-\a);
\draw[arrowto] (.5,-\a) -- (-.5,-\a);
\draw[arrowfrom] (-.5,-\a) -- (-1,0);

\draw[arrowto] (1,0) -- (2,0);
\draw[arrowto] (2,0) -- (3,0);
\draw[arrowfrom] (2,0) -- (2.5,\a) coordinate(X);
\draw[arrowfrom] (2,0) -- (2.5,-\a) coordinate(Y);
\draw[arrowto] (3,0) -- (2.5,\a);
\draw[arrowto] (3,0) -- (2.5,-\a);

\node[left] at (-1, 0) {$1$};
\node[above] at (-.5, \a) {$2$};
\node[above] at (.5, \a) {$3$};
\node[xshift=1.1mm,yshift=2mm] at (1, 0) {$4$};
\node[xshift=-1mm,yshift=2mm] at (2, 0) {$5$};
\node[above] at (X) {$6$};
\node[right] at (3,0) {$7$};
\node[below] at (Y) {$8$};
\node[xshift=-1mm,yshift=-2mm] at (2,0) {$9$};
\node[xshift=1.1mm,yshift=-2mm] at (1,0) {$10$};
\node[below] at (A) {$11$};
\node[below] at (B) {$12$};
\end{tikzpicture}
\caption{A non-elliptic web of type $(1,2,1,1,2,2,1,1,2,2,1,2)$ and its triangulated dual diskoid. The web has one interior face. \label{fig:dual examples}}
\end{figure}

\section{The Affine Building}\label{sec:building}
\subsection{Definition of the Affine Building}
Let $\Delta_m$ denote the affine building of type $A_m$. It is an infinite simplicial complex of dimension $m$, which we now define (our reference is \cite[\S9.2]{Ron}). Consider the field of Laurent series $\mathcal K=\mathbb C((t))$. Let $U=\mathcal K^{m+1}$ be an $(m+1)$-dimensional vector space over $\mathcal K$. The field $\mathcal K$ has a discrete valuation with uniformizer $t$. Its discrete valuation ring is the ring of power series $\mathcal O=\mathbb C[[t]]$. An \emph{$\mathcal O$-lattice} $L$ is a finitely generated $\mathcal O$-submodule of $U$ such that the $\mathcal K$-span of $L$ generates all of $U$. Equivalently, $L$ is an $\mathcal O$-lattice if it is closed under multiplication by $t$, $L$ is contained in $t^{-k}\mathcal O^{m+1}$ for some large $k$, and $L$ contains $t^k\mathcal O^{m+1}$ for some large $k$. Here $\mathcal O^{m+1}=\vspan_{\mathcal O}(e_1,\ldots,e_{m+1})$ is the base lattice where $e_1,\ldots,e_{m+1}$ is a basis of $\mathcal K^{m+1}$. Two lattices $L,L'$ are \emph{homothetic}, or \emph{equivalent}, if $L'=t^kL$ for some integer $k$. Let $[L]$ denote the homothety class of the lattice $L$. 

The vertices of $\Delta_m$ are the homothety classes of lattices. The edges are unordered pairs of classes $\left(x,y\right)$ such that for any lattice $L$ in the class of $x$, there exists a lattice $L'$ in the class of $y$ such that $tL\subset L'\subset L$. Then $\Delta_m$ is the flag complex defined on this $1$-skeleton - that is, a set of vertices $x_1,\ldots,x_{k+1}$ forms a $k$-simplex if the $x_i$ are pairwise adjacent. For any simplex $(x_1,\ldots,x_{k+1})$ one can always find representatives $L_i$ of the $x_i$ such that 
\begin{align*}
tL_1\subset L_{k+1}\subset L_{k}\subset\cdots\subset L_2\subset L_1. 
\end{align*}
Note that $L_1/tL_1$ is a $\mathbb C$-vector space of dimension $m+1$ containing the subspaces $L_i/tL_1$. Hence the maximal simplices have $m+1$ vertices, so $\Delta_m$ is $m$-dimensional. 

There is a more axiomatic description of $\Delta_m$ as follows (see \cite[\S4.1]{AB}). The affine building $\Delta_m$ is a simplicial complex that is a union of subcomplexes called apartments satisfying the following.
\begin{itemize}
\item
Each apartment is the affine Coxeter complex of type $A_m$.
\item
For any two simplices $\sigma,\tau\in \Delta_m$ there exists an apartment containing both.
\item
If two apartments contain a common simplex, then there is a simplicial isomorphism between the two apartments that fixes all common points.
\end{itemize}

We will only be concerned with $\Delta_2$. In this case, each apartment is the Euclidean plane triangulated by equilateral triangles, the vertices of which form the $SL_3$ integral weight lattice. Recall that the weight lattice of $SL_3$ is $\mathbb Z^3/(1,1,1)$ (see Figure \ref{fig:weight lattice}). A choice of basis $v_1,v_2,v_3$ for $\mathcal K^3$ determines an apartment whose vertices are precisely the classes corresponding to lattices of the form $\vspan_{\mathcal O}\left(t^{-\mu_1}v_1,t^{-\mu_2}v_2,t^{-\mu_3}v_3\right)$ where $\mu_1,\mu_2,\mu_3$ are integers (and $(\mu_1,\mu_2,\mu_3)$ can be thought of as an $SL_3$-weight). 

There is a dominant-weight-valued metric on the vertices of the building defined as follows. Any two vertices $x,y$ in $\Delta_2$ are contained in a common apartment by the second bullet point above. Hence, we can choose a basis $v_1,v_2,v_3$ for $\mathcal K^3$ such that $x$ is the class of $\vspan_{\mathcal O}\left(v_1,v_2,v_3\right)$ and $y$ is the class of $\vspan_{\mathcal O}\left(t^{-\mu_1}v_1,t^{-\mu_2}v_2,t^{-\mu_3}v_3\right)$ for integers $\mu_1\geq \mu_2 \geq \mu_3$. In other words, $\mu=(\mu_1,\mu_2,\mu_3)$ is a dominant weight. Define the \emph{distance} between $x$ and $y$ to be $d(x,y)=\mu$. It is well defined, in the sense that if $w_1,w_2,w_3$ is any other basis such that $x=\vspan_{\mathcal O}\left(w_1,w_2,w_3\right)$ and $y=\vspan_{\mathcal O}\left(t^{-\lambda_1}w_1,t^{-\lambda_2}w_2,t^{-\lambda_3}w_3\right)$ for which $\lambda_1\geq \lambda_2\geq \lambda_3$, then $\lambda=\mu$. There is the usual partial order on dominant weights, so that $\lambda\geq \mu$ if $\lambda-\mu$ is a non-negative integer combination of simple roots. 

A \emph{combinatorial geodesic} between $x$ and $y$ will mean any minimal path between~$x$ and $y$ along the edges of $\Delta_2$. It is a fact that any apartment containing both $x$ and $y$ also contains every combinatorial geodesic between $x$ and $y$. In any such apartment, the set of combinatorial geodesics span a parallelogram (see Figure \ref{fig:weight lattice}). 

Although we will not need it, note that there is also a locally Euclidean distance on $\Delta_2$ defined by taking the usual Euclidean distance in any apartment containing two points $p,q$ of the building. This makes the building into a CAT(0) space \cite{BT}. The local Euclidean geodesic is the usual Euclidean line segment passing through the parallelogram of combinatorial geodesics. See \cite[\S 11, 12]{AB} for more details on this point of view.

Recall that the fundamental weights are $\omega_1=(1,0,0)$ and $\omega_2=(1,1,0)$. A directed edge from $x$ to $y$ will be called an \emph{$\omega_1$-step} (resp. \emph{$\omega_2$-step}) if $d(x,y)=\omega_1$ (resp. $\omega_2$). Note that $d(x,y)=\omega_1$ if and only if $d(y,x)=\omega_2$. More generally, $d(y,x)=d(x,y)^*$ where the dual of a dominant weight $\lambda$ is $\lambda^*=(-\lambda_3,-\lambda_2,-\lambda_1)$. For a dominant weight $\lambda=a\omega_1+b\omega_2$, let $\lvert \lambda\rvert=a+b$ denote the number of steps in any combinatorial geodesic. This is the minimum number of steps in a path from $[L]$ to $[L']$ if $d([L],[L'])=\lambda$. 

We will often use the following fact. Given three distinct vertices $x,y,z$ in the building such that $d(x,y)=\omega_1$ and $d=(y,z)=\omega_2$ there exists a unique vertex adjacent to all three $x,y,z$ (likewise if $d(x,y)=\omega_2$ and $d=(y,z)=\omega_1$).

\begin{figure}
\begin{tikzpicture}
\begin{rootSystem}{A}
\weightLattice{5}
\tikzmath{\x=3; \y=1;}
\tikzmath{\a=-2; \b=-2;}
\tikzmath{\w=\x-\a-1;\h=\y-\b-1;}

\draw[thick] \weight{\a}{\b} -- \weight{\a}{\y};
\draw[thick] \weight{\a}{\y} -- \weight{\x}{\y};

\draw[thick] \weight{\a}{\b} -- \weight{\x}{\b};
\draw[thick] \weight{\x}{\b} -- \weight{\x}{\y};

\tikzmath{\H=\h+1;\W=\w+1;}
\foreach \i in {1,...,\H}
{\draw[thick, arrowto, red] \weight{\a}{\b+\i-1} -- \weight{\a}{\b+\i};
\draw[thick, arrowto, blue] \weight{\x}{\b+\i-1} -- \weight{\x}{\b+\i};}

\node[above left, red] at (hex cs:x=-2,y=1){\small\(\minconv([L],[L'])\)};
\node[below right, blue] at (hex cs:x=3,y=-2){\small\(\maxconv([L],[L'])\)};

\foreach \i in {1,...,\W}
{\draw[thick, arrowto, blue] \weight{\a+\i-1}{\b} -- \weight{\a+\i}{\b};
\draw[thick, arrowto, red] \weight{\a+\i-1}{\y} -- \weight{\a+\i}{\y};}

\foreach \i in {1,...,\w}
\draw[thick] \weight{\a+\i}{\b} -- \weight{\a+\i}{\y};
\foreach \i in {1,...,\h}
\draw[thick] \weight{\a}{\b+\i} -- \weight{\x}{\b+\i};

\node[below left=-3pt] at (hex cs:x=-2,y=-2){\small\([L]\)};
\node[above right=-2pt] at (hex cs:x=3,y=1){\small\([L']\)};

\end{rootSystem}
\end{tikzpicture}
\begin{tikzpicture}
\begin{rootSystem}{A}
\weightLattice{2}
\draw[arrowto, thick] \weight{0}{0} -- \weight{1}{0};
\draw[arrowto, thick] \weight{0}{0} -- \weight{-1}{1};
\draw[arrowto, thick] \weight{0}{0} -- \weight{0}{-1};

\node[below] at (0,-1) {$\omega_1$-steps};

\end{rootSystem}
\end{tikzpicture}
\begin{tikzpicture}
\begin{rootSystem}{A}
\weightLattice{2}
\draw[arrowto, thick] \weight{0}{0} -- \weight{0}{1};
\draw[arrowto, thick] \weight{0}{0} -- \weight{-1}{0};
\draw[arrowto, thick] \weight{0}{0} -- \weight{1}{-1};

\node[below] at (0,-1) {$\omega_2$-steps};

\end{rootSystem}
\end{tikzpicture}
\caption{A portion of the $A_2$ weight lattice - a single apartment in $\Delta_2$. The combinatorial geodesics forming a parallelogram between two vertices are shown in bold. Arrows are depicted along the two geodesics consisting of vertices in $\minconv$ and $\maxconv$. \label{fig:weight lattice}}
\end{figure}
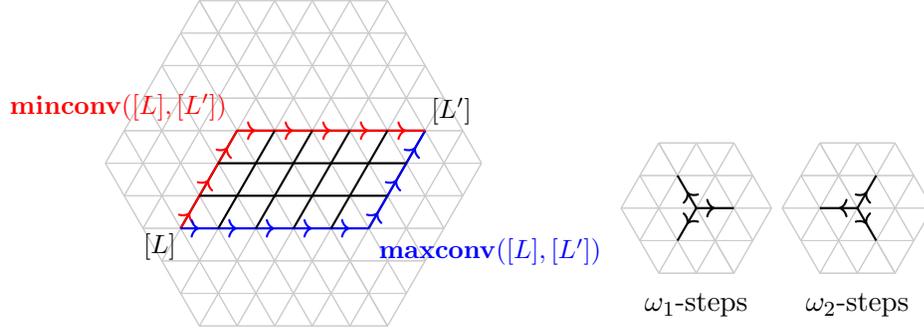

\subsection{Convex Sets}
We now define convex sets in the affine building. See \cite{JSY, Zha} for an exposition of the following definitions, as well as their connection to tropical convexity. The tropical geometry point of view is not used in the present paper.

There are two dual notions of convexity. For two lattices $L,L'$ their intersection $L\cap L'$ is again a lattice. Likewise, their sum $L+L'$ is a lattice. A set of lattice classes is \emph{min-convex} (resp. \emph{max-convex}) if it closed under taking intersections (resp. sums) of a finite subset of lattice representatives. That is, a set $X$ of lattice classes is min-convex if for any finite set of lattice classes $[L_1],\ldots,[L_k]$ in $X$, the class $[L_1\cap\cdots\cap L_k]$ is in $X$. The min and max terminology comes from the interpretation of the affine building in terms of additive norms on $\mathcal K^3$, as mentioned in \cite[\S2]{JSY}.

\begin{Def}
\begin{itemize}
\item
Let $\minconv([L_1],\ldots,[L_k])$ denote the min-convex hull of $[L_1],\ldots,[L_k]$. It is the smallest min-convex set containing the lattice classes $[L_1],\ldots,[L_k]$.
\item
Let $\maxconv([L_1],\ldots,[L_k])$ denote max-convex hull of $[L_1],\ldots,[L_k]$. 
 \item
Let $\conv([L_1],\ldots,[L_k])$ denote the intersection \\$\minconv([L_1],\ldots,[L_k])\cap \maxconv([L_1],\ldots,[L_k])$ of the two convex sets.
\end{itemize}
\end{Def}

The min-convex hull of a finite number of lattice classes is finite, a result attributed to Faltings \cite{Fal} in \cite{KT}. The following lemma and proof of the following proposition appears in \cite[2.11, 2.12]{Zha}. 

\begin{Lem}\label{lem:pairs}
Let $[L_1],\ldots,[L_k]$ be a collection of lattice classes. Then
\begin{align*}
\minconv([L_1],\ldots,[L_k])=\bigcup_{[L]\in \minconv([L_2],\ldots,[L_k])} \minconv([L_1],[L]).
\end{align*}
\end{Lem}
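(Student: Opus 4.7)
The plan is to prove both containments. Write $B$ for the right-hand side and $C = \minconv([L_2],\ldots,[L_k])$ for the inner hull. The containment $B \subseteq \minconv([L_1],\ldots,[L_k])$ is the easy direction: since $\minconv([L_1],\ldots,[L_k])$ is min-convex and contains each $[L_i]$, by minimality it contains $C$, and then being min-convex it contains $\minconv([L_1],[L])$ for every $[L] \in C$. Taking the union gives $B \subseteq \minconv([L_1],\ldots,[L_k])$.

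For the reverse containment, my strategy is to show that $B$ is itself min-convex and contains each $[L_i]$; then minimality of $\minconv([L_1],\ldots,[L_k])$ forces the inclusion. That $B$ contains the generators is immediate: for $i \geq 2$ we have $[L_i] \in C$ and $[L_i] \in \minconv([L_1],[L_i]) \subseteq B$, while $[L_1] \in \minconv([L_1],[L_2]) \subseteq B$ (the case $k=1$ is trivial). The substantive step is closure under pairwise intersection, which by associativity of $\cap$ suffices to give closure under arbitrary finite intersection.

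For that step, I would invoke the explicit description $\minconv([L_1],[L]) = \{[t^a L_1 \cap t^b L] : a, b \in \mathbb{Z}\}$. This family plainly contains $[L_1]$ and $[L]$, and the identity
\begin{align*}
(t^{a_1} L_1 \cap t^{b_1} L) \cap (t^{a_2} L_1 \cap t^{b_2} L) = t^{\max(a_1,a_2)} L_1 \cap t^{\max(b_1,b_2)} L
\end{align*}
shows it is closed under intersection of arbitrary representatives, hence equals the min-convex hull. Given $[M_1], [M_2] \in B$, choose $[L^{(1)}], [L^{(2)}] \in C$ with $[M_j] \in \minconv([L_1],[L^{(j)}])$ and pick representatives $M_j = t^{a_j} L_1 \cap t^{b_j} L^{(j)}$. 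Then
\begin{align*}
M_1 \cap M_2 = t^{\max(a_1,a_2)} L_1 \cap \bigl(t^{b_1} L^{(1)} \cap t^{b_2} L^{(2)}\bigr).
\end{align*}
Setting $L'' = t^{b_1} L^{(1)} \cap t^{b_2} L^{(2)}$, min-convexity of $C$ gives $[L''] \in C$, so $[M_1 \cap M_2] \in \minconv([L_1],[L'']) \subseteq B$, as needed.

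The main obstacle is pinning down the explicit formula for the min-convex hull of two classes and, in particular, ensuring one can freely rescale the representative of $[L_1]$ appearing in $M_1$ and $M_2$ so that the two $L_1$-factors can be merged via the $\max$ rule; once that algebraic point is established, the rest is bookkeeping about how min-convexity propagates through a union indexed by a min-convex set.
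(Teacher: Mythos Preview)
Your argument is correct. Note, however, that the paper does not actually give a proof of this lemma; it simply records the statement and cites \cite[2.11, 2.12]{Zha} for the proof. So there is no in-paper argument to compare yours against. For what it is worth, your approach---showing that the right-hand side is itself min-convex and contains the generators, using the explicit description $\minconv([L_1],[L]) = \{[t^a L_1 \cap t^b L] : a,b \in \mathbb{Z}\}$---is essentially the natural one and matches what one finds in the cited reference.

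One small remark on your final paragraph: the point about ``freely rescaling the representative of $[L_1]$'' is not really an obstacle. Any representative of $[M_j]$ has the form $t^{c_j}M_j = t^{a_j+c_j}L_1 \cap t^{b_j+c_j}L^{(j)}$, so it is automatically of the required shape with shifted exponents, and the same $\max$ identity applies. This handles closure under intersection of \emph{arbitrary} representatives, which is what min-convexity demands; you could state this explicitly rather than flagging it as a potential difficulty.
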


\begin{Prop}
Let $[L_1],\ldots, [L_k]$ be lattice classes representing vertices in $\Delta_m$. The sets $\minconv([L_1],\ldots,[L_k])$ and $\maxconv([L_1],\ldots,[L_k])$ are finite. 
\end{Prop}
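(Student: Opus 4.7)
The plan is to induct on $k$, using Lemma~\ref{lem:pairs} for the inductive step, with the real content concentrated in the base case $k=2$; once this is handled, the $\maxconv$ statement follows by the dual argument.

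For the base case, pick an apartment $A$ containing both $[L_1]$ and $[L_2]$, which exists by the apartment axioms. In $A$ one can find a basis $v_1, \ldots, v_{m+1}$ of $\mathcal K^{m+1}$ and integers $\mu_1, \ldots, \mu_{m+1}$ with $L_1 = \vspan_{\mathcal O}(v_i)$ and $L_2 = \vspan_{\mathcal O}(t^{-\mu_i}v_i)$. Diagonal lattices in this basis are closed under intersection: for integers $c_1, c_2$,
\[
t^{c_1}L_1 \cap t^{c_2}L_2 \;=\; \vspan_{\mathcal O}\bigl(t^{\max(c_1,\,c_2-\mu_i)}v_i\bigr).
\]
By associativity, an iterated intersection involving multiple representatives of $[L_1]$ and of $[L_2]$ collapses to a single representative of each, so every element of $\minconv([L_1],[L_2])$ arises in the form above. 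Modulo homothety the resulting class depends only on $c = c_2 - c_1 \in \mathbb Z$, and the tuple $\bigl(\max(0,\,c-\mu_i)\bigr)_i$ is piecewise constant in $c$ with transitions only at the $m+1$ values $c = \mu_i$. Hence at most $m+2$ distinct classes appear and $\minconv([L_1],[L_2])$ is finite.

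For the inductive step, assume $\minconv([L_2], \ldots, [L_k])$ is finite. Lemma~\ref{lem:pairs} expresses
\[
\minconv([L_1], \ldots, [L_k]) \;=\; \bigcup_{[L] \in \minconv([L_2], \ldots, [L_k])} \minconv([L_1], [L]),
\]
a finite union of finite sets, hence finite.

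The $\maxconv$ statement is handled by the formally identical argument with sums of lattices replacing intersections throughout; alternatively, fixing a nondegenerate pairing on $\mathcal K^{m+1}$ and passing to dual lattices induces an involution of $\Delta_m$ that interchanges $\minconv$ and $\maxconv$ (since duality exchanges sums and intersections of lattices), whence finiteness of $\maxconv$ follows from that of $\minconv$. The main obstacle is the base-case computation in an apartment; once pairwise finiteness is established the induction and the $\maxconv$ case are essentially formal.
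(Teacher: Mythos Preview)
Your overall strategy---induct on $k$ via Lemma~\ref{lem:pairs}, with the work concentrated in the base case $k=2$---is exactly the paper's approach, and your reduction of $\minconv([L_1],[L_2])$ to the one-parameter family $[L_1\cap t^cL_2]$ is correct.

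However, the sentence ``the tuple $\bigl(\max(0,\,c-\mu_i)\bigr)_i$ is piecewise constant in $c$ with transitions only at the $m+1$ values $c=\mu_i$'' is false, and the bound $m+2$ that you derive from it is wrong. Each coordinate $\max(0,c-\mu_i)$ is piecewise \emph{linear} in $c$, not piecewise constant; on an interval $[\mu_j,\mu_{j+1}]$ (with the $\mu_i$ sorted) the first $j$ coordinates increase with $c$ while the rest stay at $0$, so distinct integer values of $c$ in that interval give \emph{distinct} homothety classes. Concretely, already for $m=1$ with $\mu=(3,0)$ one gets the four classes with exponent vectors $(0,0),(0,1),(0,2),(0,3)$, exceeding $m+2=3$. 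In general the number of classes is $\max_i\mu_i-\min_i\mu_i+1$, not $m+2$.

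The fix is the observation the paper makes: for $c\le\min_i\mu_i$ the intersection is $L_1$ and for $c\ge\max_i\mu_i$ it is $t^cL_2$, so the class stabilizes at the two ends and only finitely many integer values of $c$ lie in between. With this correction your argument goes through and coincides with the paper's.
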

\begin{proof}
For two lattices $L_1,L_2$ the set $\minconv([L_1],[L_2])$ consists of classes with representatives of the form $L_1\cap t^aL_2$. For large enough $a$ we have $L_1\supseteq t^aL_2$ and $L_1\subseteq t^{-a}L_2$, so $\minconv([L_1],[L_2])$ is finite. The proposition follows by induction together with the previous lemma.
\end{proof}

Restrict to $\Delta_2$. Then $\minconv([L],[L'])$ is the set of vertices on the combinatorial geodesic from $[L]$ to $[L']$ consisting of $\omega_2$ steps followed by $\omega_1$ steps. This path forms the upper boundary of the parallelogram in Figure \ref{fig:weight lattice}. It is the combinatorial geodesic from $[L']$ to $[L]$ that also first takes $\omega_2$ steps followed by $\omega_1$ steps, since $\omega_1^*=\omega_2$. 

Similarly, $\maxconv([L],[L'])$ is the set of vertices on the combinatorial geodesic from $[L]$ to $[L']$ (or from $[L']$ to $[L]$) consisting of $\omega_1$ steps followed by $\omega_2$ steps. It forms the lower boundary of the parallelogram in Figure \ref{fig:weight lattice}. Note that $\minconv([L],[L'])$ and $\maxconv([L],[L'])$ are equal if $d([L],[L'])$ is a multiple of $\omega_1$ or $\omega_2$. If this is not the case, then the term \emph{elbow vertex} will mean the terminal vertex of the last $\omega_2$ step (resp. $\omega_1$) along the $\minconv([L],[L'])$ geodesic (resp. $\maxconv([L],[L'])$).

\subsection{A Property of $\conv$ for Paths}

Since we will be interested in paths in the affine building, and more specifically polygons, we will need the following lemma and proposition. 
\begin{Lem}\label{lem:pairwise}
If $P=\{[L_1],\ldots,[L_n]\}$ is a path in $\Delta_2$, then
\begin{align*}
\minconv(P)&=\cup_{i\not=j}\minconv([L_i],[L_j]),\\
\maxconv(P)&=\cup_{i\not=j}\maxconv([L_i],[L_j]).
\end{align*}
\end{Lem}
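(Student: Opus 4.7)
The inclusion $\supseteq$ is immediate by monotonicity: since $[L_i],[L_j]\in P$, each $\minconv([L_i],[L_j])$ sits inside $\minconv(P)$, and similarly for $\maxconv$. For the reverse inclusion, the plan is induction on $n$. The case $n=2$ is a tautology. For $n\geq 3$, apply Lemma~\ref{lem:pairs} to peel off $[L_1]$:
\[
\minconv(P)=\bigcup_{[L]\in\minconv(P')}\minconv([L_1],[L]),
\]
where $P'=([L_2],\ldots,[L_n])$. Applying the inductive hypothesis to the sub-path $P'$ decomposes the inner set as a union of pairwise min-convex hulls. For a vertex $[L]\in\minconv([L_i],[L_j])$ with $2\leq i<j\leq n$, choose representatives so that $L=L_i\cap t^aL_j$; then any $[M]\in\minconv([L_1],[L])$ has representative $M=L_1\cap t^bL=L_1\cap t^bL_i\cap t^{a+b}L_j$, so $[M]\in\minconv([L_1],[L_i],[L_j])$.

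This reduces the lemma to a three-vertex statement:
\[
\minconv([L_1],[L_i],[L_j])=\minconv([L_1],[L_i])\cup\minconv([L_1],[L_j])\cup\minconv([L_i],[L_j]).
\]
To prove this I would work in a common apartment for the three vertices, identified with the weight lattice $\mathbb{Z}^3/(1,1,1)$ so that each lattice becomes a triple of exponents and intersections are computed componentwise as minima. The pairwise min-convex hulls correspond to the upper $\omega_2$-then-$\omega_1$ combinatorial geodesics shown in Figure~\ref{fig:weight lattice}. A finite case analysis — on which index among $\{L_1,L_i,L_j\}$ achieves the minimum in each of the three coordinates — shows that any triple intersection $L_1\cap t^bL_i\cap t^{a+b}L_j$ is already homothetic to some two-fold intersection, placing its class on one of the three pairwise upper geodesics. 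The $\maxconv$ statement follows by the dual argument, swapping sums with intersections and $\omega_1$ with $\omega_2$.

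The main obstacle is the three-vertex decomposition. It is genuinely a rank-$2$ statement: the analogue fails in $\Delta_m$ for $m\geq 3$, consistent with the introduction's remark that already in rank~$3$ generic polygons can satisfy $\conv(P)=P$. A secondary subtlety is ensuring that the three vertices $[L_1],[L_i],[L_j]$ lie in a common apartment so that the coordinatization above is available; if this is not guaranteed by the building axioms for an arbitrary triple, I would instead argue algebraically on lattice intersections in $\mathcal K^3$, exploiting that only three lattices are involved so that a $\mathcal K$-basis simultaneously diagonalizing them can be chosen without reference to an ambient apartment.
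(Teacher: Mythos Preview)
Your reduction via Lemma~\ref{lem:pairs} and the inductive hypothesis on $P'$ is correct and matches the paper's opening. The gap is in the three-vertex step. Three vertices of $\Delta_2$ need not lie in a common apartment, and your proposed fallback---choosing a $\mathcal K$-basis simultaneously diagonalizing the three lattices---is not an independent argument but the \emph{same} hypothesis restated: a basis $v_1,v_2,v_3$ with each $L_k=\vspan_{\mathcal O}(t^{a_k}v_1,t^{b_k}v_2,t^{c_k}v_3)$ exists if and only if the three classes lie in a common apartment. So both routes collapse to one unjustified assumption, and your componentwise-minimum case analysis is unavailable precisely in the cases where it is needed. You correctly flagged this as the main obstacle, but the workaround you offer is circular.

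The paper never proves the bare decomposition $\minconv(x,y,z)=\minconv(x,y)\cup\minconv(x,z)\cup\minconv(y,z)$ for an arbitrary triple. Instead it uses the path hypothesis in an essential way. After the same reduction (to $\minconv([L_1],[L_j],[L_n])$ in the paper's indexing), it observes that the two $1$-simplices $([L_1],[L_2])$ and $([L_{n-1}],[L_n])$ \emph{do} lie in a common apartment---any two simplices do---and that apartment also contains $\minconv([L_1],[L_n])$. A case analysis on the positions of $[L_2]$ and $[L_{n-1}]$ relative to this geodesic (Figure~\ref{fig:apartment}) shows that every $[L]\in\minconv([L_1],[L_j],[L_n])$ either already lies on some pairwise $\minconv([L_s],[L_t])$, or lies in $\minconv([L_2],[L_j],[L_n])$ or $\minconv([L_1],[L_j],[L_{n-1}])$, triples supported on a strictly shorter subpath to which the outer induction applies again. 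The adjacent path vertices $[L_2]$ and $[L_{n-1}]$ are what make the argument go through; an arbitrary triple has no such neighbours to hand the problem off to.
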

\begin{proof}
Induct on the length of the path $n$. The base case $n=2$ is trivial, so suppose that $n>2$ and that the result holds for paths of length less than $n$. By induction $\minconv([L_1],\ldots,[L_{n-1}])=\cup_{1\leq i<j\leq n-1}\minconv([L_i],[L_j])$. Then by Lemma \ref{lem:pairs}
\begin{align*}
\minconv([L_1],\ldots,[L_n])&=\bigcup_{[K]\in \minconv([L_1],\ldots,[L_{n-1}])}\minconv([K],[L_n])\\
&=\bigcup_{[K]\in \bigcup\minconv([L_i],[L_j])}\minconv([K],[L_n]).\\
&=\bigcup_{1\leq i<j\leq n-1}\minconv([L_i],[L_j],[L_n])
\end{align*}
If $i\not=1$, then $\minconv([L_i],[L_j],[L_n])$ is contained in 
\begin{align*}
\minconv([L_2],\ldots,[L_n])=\bigcup_{2\leq l<k\leq n}\minconv([L_l],[L_k])
\end{align*}
where the equality is by induction. So it remains to show that for any fixed $j$ that $\minconv([L_1],[L_j],[L_n])$ is contained in $\cup_{l\not=k}\minconv([L_l],[L_k])$.

Let $[L]$ be an element of $\minconv([L_1],[L_j],[L_n])$, so that there are integers $a$ and $b$ such that $[L]=[L_1\cap t^aL_j\cap t^b L_n]$. Consider the set $\minconv([L_1],[L_n])$ as a path in the building. Then the vertex $[L]$ lies on a path $\minconv([K],[L_j])$ for some $[K]\in \minconv([L_1],[L_n])$.

Consider the two $1$-simplices $([L_1],[L_2])$ and $([L_n],[L_{n-1}])$, and let $A$ be an apartment containing both. Then $A$ contains all of the combinatorial geodesics between $[L_1]$ and $[L_n]$, and hence also contains $\minconv([L_1],[L_n])$. Let $v_1,v_2,v_3$ be a basis defining $A$ such that $L_1=\vspan_{\mathcal O}(v_1,v_2,v_3)$ and $L_n=\vspan_{\mathcal O}(t^{-x}v_1,t^{-y}v_2,v_3)$ for integers $x\geq y\geq 0$. Then the path $\minconv([L_1],[L_n])$ from $[L_1]$ to $[L_n]$ is a sequence of $y$ many $\omega_2$ steps followed by $x-y$ many $\omega_1$ steps. 

There are six possible configurations for $[L_2]$ and six for $[L_{n-1}]$ in apartment $A$ relative to the path $\minconv([L_1],[L_n])$, as depicted in Figure \ref{fig:apartment}. Note that if $[K]$ is contained in $\minconv([L_2],[L_n])$ (resp. $\minconv([L_1],[L_{n-1}])$), then $[L]$ is in $\minconv([L_2],[L_j],[L_n])$ (resp. $\minconv([L_1],[L_j],[L_{n-1}])$), and the result follows by induction.

If $y=0$, then $[K]$ is contained in $\minconv([L_2],[L_n])$ for any choice of $[L_2]$. Similarly, if $x-y=0$, then $[K]$ is contained in $\minconv([L_1],[L_{n-1}])$. Hence, we may assume that both $y\not=0$ and $x-y\not=0$. 

For $[K]$ to not be contained in $\minconv([L_2],[L_n])$ nor $\minconv([L_1],[L_{n-1}])$ we must have that $[K]$ is the elbow vertex $\vspan_{\mathcal O}\left(t^{-y}v_1,t^{-y}v_2,v_3\right)$ and $[L_2]$ (resp. $[L_{n-1}]$) is one of $\vspan_{\mathcal O}\left(t^{-1}v_1,v_2,v_3\right)$ or $\vspan_{\mathcal O}\left(t^{-1}v_1,v_2,t^{-1}v_3\right)$ (resp. $\vspan_{\mathcal O}\left(t^{-x+1}v_1,t^{-y+1}v_2,v_3\right)$ or $\vspan_{\mathcal O}\left(t^{-x}v_1t^{-y+1},v_2,v_3\right)$). Let $a$ (resp. $b$) denote the vertex adjacent to $[K]$ that is on the geodesic from $[K]$ to $[L_1]$ (resp. $[L_n]$). Let $c$ denote the vertex that is simultaneously adjacent to all three $[K], a$, and $b$. See Figure \ref{fig:apartment}. 

Suppose that $d([L_j],[K])=s\omega_1+t\omega_2$ for nonnegative integers $s$ and $t$, so that $\minconv([L_j],[K])$ is a path from $[L_j]$ to $[K]$ consisting of $t$ many $\omega_2$ steps followed by $s$ many $\omega_1$ steps. Suppose that $s=0$. Let $e=(d,[K])$ be the last edge of the path $\minconv([L_j],[K])$, which is an $\omega_2$ step into $[K]$. If $d=a$, then all of $\minconv([L_j],[K])$ is contained in $\minconv([L_j],[L_n])$, and in particular $[L]\in \minconv([L_j],[L_n])$ so the result follows. If $d\not=a$, then the edges $(d,[K])$ and $(a,[K])$ can be placed in a common apartment from which it can be seen that $\minconv([L_j],[L_1])$ consists of the $\omega_2$ steps from $[L_j]$ to $[K]$ followed by the $\omega_1$ steps from $[K]$ to $[L_1]$. Hence, $\minconv([L_j],[K])$, and in particular $[L]$, is contained in $\minconv([L_j],[L_1])$.

Now suppose that $s\not=0$, so that the final edge $e=(d,[K])$ of $\minconv([L_j],[K])$ is an $\omega_1$ step into the vertex $[K]$. Place $e$ and $(a,[K])$ into a common apartment. If $d$ and $a$ are not adjacent vertices in the building, then the geodesic from $d$ to $[L_1]$ consists of $\omega_1$ steps, so $\minconv([L_j],[K])$ is contained in $\minconv([L_j],[L_1])$ and the result follows. Now suppose that $d$ and $a$ are adjacent in the building. Suppose that $d=c$. Note that $c$ is contained in $\minconv([L_2],[L_n])$. Hence, $[L]$ is contained in $\minconv([L_2],[L_j],[L_n])$, so the result follows by induction. If $d\not=c$, then the geodesic from $d$ to $[L_n]$ contains the geodesic from $[K]$ to $[L_n]$, so $\minconv([L_j],[K])$ is contained in $\minconv([L_j],[L_n])$.
\end{proof}
\begin{figure}
\center
\begin{tikzpicture}
\tikzmath{\x=3; \y=1;}
\tikzmath{\a=-2; \b=-2;}

\begin{rootSystem}{A}
\draw[thick] \weight{\a}{\b} -- \weight{\a}{\y};
\draw[thick] \weight{\a}{\y} -- \weight{\x}{\y};
\wt {\a}{\b}
\wt [red] {\a+1}{\b}
\wt {\a}{\b+1}
\wt {\a-1}{\b}
\wt {\a}{\b-1}
\wt [red] {\a+1}{\b-1}
\wt {\a-1}{\b+1}

\node[below right, black] at (hex cs:x=-1,y=-2){\small\([L_2]\)};
\node[below right, black] at (hex cs:x=-1,y=-3){\small\([L_2]\)};

\node[below, black] at (hex cs:x=3,y=0){\small\([L_{n-1}]\)};
\node[below right, black] at (hex cs:x=4,y=0){\small\([L_{n-1}]\)};

\node[left, black] at (hex cs:x=-2,y=0){\small\(a\)};
\node[above left, black] at (hex cs:x=-2,y=1){\small\([K]\)};
\node[above right, black] at (hex cs:x=-1,y=1){\small\(b\)};
\node[right, black] at (hex cs:x=-1,y=0){\small\(c\)};

\wt {\x}{\y}
\wt {\x+1}{\y}
\wt {\x}{\y+1}
\wt {\x-1}{\y}
\wt [red] {\x}{\y-1}
\wt [red] {\x+1}{\y-1}
\wt {\x-1}{\y+1}
\end{rootSystem}
\end{tikzpicture}
\caption{An apartment containing the path $\minconv\left([L_1],[L_n]\right)$ and the $1$-simplices $\left([L_1],[L_2]\right)$ and $\left([L_{n-1}],[L_n]\right)$. \label{fig:apartment}}
\end{figure}
Suppose that $d(v,v')=s\omega_1+t\omega_2$ for two vertices $v, v'$ in the building and nonnegative integers $s$ and $t$. Then $s$ and $t$ are both nonzero if and only if $\conv(v,v')=\left\{v,v'\right\}$. In this case, we say that $\conv(v,v')$ is trivial. If $s=0$ or $t=0$, then $\conv(v,v')=\minconv(v,v')=\maxconv(v,v')$ is a \emph{straight-path geodesic}, in which case the parallelogram of geodesics is simply a line segment. For a path $P$ in the building the following proposition says that every vertex in $\conv(P)$ lies on a straight-path geodesic between two vertices of $P$.

\begin{Prop}\label{prop:conv}
If $P=\{[L_1],\ldots,[L_n]\}$ is a path in $\Delta_2$, then
\begin{align*}
\conv(P)=\cup_{i\not=j}\conv([L_i],[L_j]).
\end{align*}
\end{Prop}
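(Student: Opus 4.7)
The plan is to prove the two containments separately. The containment $\supseteq$ is immediate: for any $i\neq j$, we have $\minconv([L_i],[L_j])\subseteq \minconv(P)$ and $\maxconv([L_i],[L_j])\subseteq \maxconv(P)$ by the definition of the convex hulls, hence $\conv([L_i],[L_j])\subseteq\conv(P)$.

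For the nontrivial direction $\subseteq$, suppose $[L]\in\conv(P)$. Applying Lemma \ref{lem:pairwise} to both $\minconv(P)$ and $\maxconv(P)$ yields indices $i\neq j$ and $k\neq l$ such that $[L]\in \minconv([L_i],[L_j])$ and $[L]\in\maxconv([L_k],[L_l])$. If $\{i,j\}=\{k,l\}$ then $[L]\in\conv([L_i],[L_j])$ and we are done, so assume the two pairs differ. In the degenerate cases where $d([L_i],[L_j])$ or $d([L_k],[L_l])$ is already a pure multiple of $\omega_1$ or $\omega_2$, the corresponding $\minconv$ or $\maxconv$ equals $\conv$, and we are again done immediately.

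In the remaining cases, $[L]$ lies on an upper bent geodesic from $[L_i]$ to $[L_j]$ (an $\omega_2$-segment followed by an $\omega_1$-segment through an elbow) and simultaneously on a lower bent geodesic from $[L_k]$ to $[L_l]$ (an $\omega_1$-segment followed by an $\omega_2$-segment through an elbow). I would perform a case analysis based on whether $[L]$ lies on the $\omega_1$-portion, the elbow, or the $\omega_2$-portion of each bent path. In every case, $[L]$ lies on a straight-path segment issuing from one of the four endpoints $[L_i],[L_j],[L_k],[L_l]$: that is, the $d$-distance from $[L]$ to that endpoint is a pure multiple of $\omega_1$ or $\omega_2$. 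The aim is to pair up two such segments meeting at $[L]$ to produce a single straight-path geodesic between two vertices of $P$ passing through $[L]$. For example, if $[L]$ lies on the $\omega_2$-portion of $\minconv([L_i],[L_j])$ so $d([L_i],[L])=r\omega_2$, and on the $\omega_2$-portion of $\maxconv([L_k],[L_l])$ so $d([L],[L_l])=r'\omega_2$, the target is to show $\conv([L_i],[L_l])$ is a straight $\omega_2$-geodesic containing $[L]$.

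The main obstacle will be the concatenation step: ensuring that two pure-direction segments meeting at $[L]$ in fact lie along a single line in a common apartment rather than bending at $[L]$. This requires an apartment-level argument in the style of Lemma \ref{lem:pairwise}, choosing an apartment that contains both segments and analyzing the finitely many possible relative positions. When the segments genuinely bend, one can either exploit the presence of the neighboring path vertex (e.g., $[L_{i+1}]$ or $[L_{l-1}]$) to force collinearity, or reduce to a different pair of indices among $P$ by taking the elbow vertex as a new anchor. The elbow cases, where $[L]$ admits straight-path segments in both fundamental-weight directions, give two candidate pairings and are the easiest. The cases where $[L]$ lies strictly in the interior of a single direction segment on both bent paths are the most delicate and carry the heart of the argument.
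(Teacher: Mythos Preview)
Your setup matches the paper's: use Lemma~\ref{lem:pairwise} to place $[L]$ on a bent $\minconv$ geodesic between $[L_i],[L_j]$ and on a bent $\maxconv$ geodesic between $[L_k],[L_l]$, then try to extract a straight-path geodesic between two vertices of $P$ through $[L]$. However, the proposal stops precisely at the point where the argument has content. You correctly flag the ``concatenation step'' as the obstacle, but the two remedies you offer do not work as stated. Using $[L_{i+1}]$ or $[L_{l-1}]$ cannot \emph{force collinearity}: two pure $\omega_2$-segments meeting at $[L]$ can genuinely bend there, and no amount of information about a single neighbor of an endpoint makes $d([L_i],[L_l])$ a pure multiple of $\omega_2$. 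And ``taking the elbow vertex as a new anchor'' is not a reduction to another pair in $P$, since the elbow vertex of a bent geodesic is generally not a vertex of $P$ at all. As written, the nine-case analysis you describe has no mechanism to close the hardest cases.

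The paper's proof avoids the full case split by a reduction you are missing. It uses the path structure to walk the indices inward: if $[L]$ is not the elbow of $\minconv([L_i],[L_j])$, then because $[L_{i+1}]$ and $[L_{j-1}]$ are adjacent to $[L_i]$ and $[L_j]$, the vertex $[L]$ already lies in one of $\minconv([L_{i+1}],[L_j])$ or $\minconv([L_i],[L_{j-1}])$. Since $j-i$ bounds the number of steps in the bent geodesic, after finitely many such replacements either the geodesic is straight (done) or $[L]$ sits exactly at the elbow. The same is done on the $\maxconv$ side. Once $[L]$ is the elbow on \emph{both} sides, the argument is purely local: let $a,b$ be the neighbors of $[L]$ toward $[L_i],[L_j]$ and $c,d$ the neighbors toward $[L_k],[L_l]$; a short adjacency check among $a,b,c,d$ always produces a pair among $\{i,j,k,l\}$ whose straight-path geodesic passes through $[L]$. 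That reduction-to-elbow step is the idea your sketch lacks.
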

\begin{proof}
Suppose that $[L]$ is an element of $\conv(P)$, so that by Lemma \ref{lem:pairwise} there are indices $i<j$ and $k<l$ and integers $x,y$ such that $[L]=[L_i\cap t^xL_j]=[L_k+t^yL_l]$. As in the previous lemma, consider an apartment containing the $1$-simplices $([L_i],[L_{i+1}])$ and $([L_j],[L_{j-1}])$ and the path $\minconv([L_i],[L_j])$. If $[L]$ is contained in some $\minconv([L_s],[L_t])$ that is a straight-path geodesic, then the result follows, so suppose that $\minconv([L_i],[L_j])$ has an elbow vertex $[K]$. If $[L]$ is not equal to $[K]$, then $[L]$ is contained in one of $\minconv([L_i],[L_{j-1}])$ or $\minconv([L_{i+1}],[L_j])$. By walking the $i$ and $j$ vertices towards each other, since $j-i$ bounds the number of steps in $\minconv([L_i],[L_j])$, there are two indices $i',j'$ such that either $\minconv([L_{i'}],[L_{j'}])$ is a straight-path geodesic, or that $[L]$ is the elbow vertex. Rename $i$ and $j$ to be these indices. Likewise, assume that $[L]$ is the elbow vertex in $\maxconv([L_k],[L_l])$. 

Let $a$ (resp. b) be the vertex adjacent to $[L]$ on the geodesic from $[L_i]$ (resp. $[L_j]$) to $[L]$. Similarly, let $c$ (resp. $d$) be the vertex adjacent to $[L]$ on the geodesic from $[L_k]$ (resp. $[L_l]$) to $[L]$. If $c$ is not adjacent to $a$, then $\minconv([L_k],[L_i])$ is a straight-path geodesic passing through $[L]$ consisting of $\omega_1$ steps from $[L_k]$ to $[L_i]$. Likewise, if $d$ is not adjacent to $a$, then $\minconv([L_l],[L_i])$ is a straight-path geodesic passing through $[L]$. If both $c$ and $d$ are adjacent to $a$, then they can't both be adjacent to $b$. Say $c$ is not adjacent to $b$, in which case $\minconv([L_k],[L_j])$ is a straight-path geodesic passing through $[L]$, consisting of $\omega_1$ steps from $[L_k]$ to $[L_j]$.
\end{proof}

\section{Growth Diagrams and Polygon Configurations}\label{sec:grassmannian}
In this section we define generic polygons in the affine building. Although we're interested in $m=2$, we'll state everything for general $m$. 
\subsection{The Polygon Space}
As mentioned in the introduction, the invariant space $\Inv(V_{\lambda^1}\otimes\cdots\otimes V_{\lambda^n})$ has an interpretation in terms of the geometric Satake correspondence. The geometric Satake correspondence describes the representation theory of $G$ in terms of perverse sheaves on the affine Grassmannian $Gr_{G^L}$ of the Langlands dual group $G^L$. The affine Grassmannian is an inductive limit of varieties and can be identified with the quotient $Gr_{G^L}=G^L(\mathcal K)/G^L(\mathcal O)$.

In the present case, $G=SL_{m+1}$, $G^L=PGL_{m+1}$, and the affine Grassmannian 
\begin{align*}
Gr_{PGL_{m+1}}=PGL_{m+1}(\mathcal K)/PGL_{m+1}(\mathcal O)
\end{align*}
can be identified with the set of lattice classes as follows. Let $e_1,\ldots,e_{m+1}$ be a basis of $\mathcal K^{m+1}$. Then $PGL_{m+1}(\mathcal K)$ acts transitively on lattice classes, and $PGL_{m+1}(\mathcal O)$ stabilizes the class of the base lattice $L_0=span_{\mathcal O}(e_1,\ldots,e_{m+1})$. Hence, the affine Grassmannian can be thought of as the vertices of the building~$\Delta_m$. 

For a sequence of fundamental weights $\vec\lambda=(\lambda^1,\ldots,\lambda^n)$ define the polygon configuration space as follows. Here $L_0$ denotes the base lattice.
\begin{align*}
\Poly(\vec\lambda)=\left\{\left([L_1]=[L_{n+1}],[L_2],\ldots,[L_n]\right)\in Gr_{PGL_{m+1}}^{n}:\,
\begin{aligned}
&[L_1]=[L_0],\\
&d([L_i],[L_{i+1}])=\lambda^i
\end{aligned}\right\}
\end{align*}
This forms an equidimensional, reducible variety, as shown by Haines in \cite{Hai1, Hai2}. It is also known as the Satake fiber. As a corollary of the geometric Satake correspondence, the number of irreducible components of $\Poly(\vec\lambda)$ is equal to the dimension of $\Inv_{SL_{m+1}}(\vec\lambda)$. We state this result here. We will recall a combinatorial description of the components in terms of distances in the next section. See \cite[\S 4.3]{HS} for a proof of the following theorem.

\begin{Thm}[\cite{Lus,Gin,BD,MV}]\label{Thm:Satake}
Under the geometric Satake correspondence, there is an isomorphism 
\begin{align*}
\left(V_{\lambda^1}\otimes \cdots \otimes V_{\lambda^n}\right)^{G^L}\cong H_{\text{top}}(\text{Poly}(\vec\lambda))
\end{align*}
where $H_{\text{top}}(\text{Poly}(\vec\lambda))$ is the top Borel--Moore homology of Poly$(\vec\lambda)$. Hence, the set of top components of Poly$(\vec\lambda)$ give a basis for $\left(V_{\lambda^1}\otimes \cdots V_{\lambda^n}\right)^G$.
\end{Thm}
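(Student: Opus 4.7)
The plan is to derive the isomorphism from the categorical form of the geometric Satake equivalence, applied to the convolution diagram whose fibers are exactly the polygon spaces.

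First, I would recall the main statement of Satake: the category $\mathrm{Perv}_{G^L(\mathcal O)}(Gr_{G^L})$ of $G^L(\mathcal O)$-equivariant perverse sheaves on the affine Grassmannian, equipped with convolution, is tensor-equivalent to the category $\mathrm{Rep}(G)$ of finite-dimensional $G$-representations. Under this equivalence the simple module $V_\lambda$ corresponds to the intersection cohomology sheaf $IC_\lambda$ supported on the closure of the orbit $Gr^\lambda$, the tensor product of $G$-representations corresponds to convolution of perverse sheaves, and the trivial $G$-representation corresponds to the skyscraper sheaf $\delta_{[L_0]}$ at the base point.

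Second, I would realize the polygon space as a fiber of the convolution map. Let
\[
\widetilde{Gr}^{\,\vec\lambda}=\bigl\{\,([L_1],[L_2],\ldots,[L_{n+1}])\in Gr_{G^L}^{\,n+1}\,:\,[L_1]=[L_0],\ d([L_i],[L_{i+1}])=\lambda^i\,\bigr\},
\]
and let $m:\widetilde{Gr}^{\,\vec\lambda}\to Gr_{G^L}$ be the map $([L_1],\ldots,[L_{n+1}])\mapsto[L_{n+1}]$. By construction $IC_{\lambda^1}*\cdots*IC_{\lambda^n}=Rm_*\mathcal F$, where $\mathcal F$ is the twisted external product of the $IC_{\lambda^i}$ along this tower, and the scheme-theoretic fiber $m^{-1}([L_0])$ is exactly $\Poly(\vec\lambda)$.

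Third, I would compute the invariant subspace via a costalk computation. The functor of $G$-invariants equals $\Hom_G(\mathbf 1,-)$, which under Satake translates to $\Hom(\delta_{[L_0]},-)$, i.e.\ a fixed degree of the costalk $i_{[L_0]}^!$ applied to the convolution. Since $m$ is proper, proper base change identifies this costalk with the hypercohomology of $\Poly(\vec\lambda)$ with coefficients in the restriction of $\mathcal F$. The technical input of Mirkovi\'c--Vilonen is that $m$ is stratified semi-small, so the restriction of $\mathcal F$ to any fiber is perverse and concentrated in a single cohomological degree; combined with Poincar\'e duality and the equidimensionality of $\Poly(\vec\lambda)$ due to Haines, this identifies $\Hom_G(\mathbf 1,V_{\lambda^1}\otimes\cdots\otimes V_{\lambda^n})$ with the top Borel--Moore homology $H_{\text{top}}(\Poly(\vec\lambda))$, with basis given by the fundamental classes of the top-dimensional irreducible components.

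The main obstacle is verifying the semi-smallness of $m$ and carefully tracking all shifts, Tate twists, and normalizations so that the answer lands in \emph{top} Borel--Moore homology rather than some arbitrary shifted cohomology group. This is precisely the technical heart of \cite{MV}, and once it is in hand the stated isomorphism and the basis statement follow directly from the general formalism above.
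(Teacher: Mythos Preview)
Your sketch is essentially the standard argument found in the references the paper cites; note however that the paper itself does not give a proof of this theorem at all, but simply attributes it to \cite{Lus,Gin,BD,MV} and points to \cite[\S4.3]{HS} for details. So there is no ``paper's own proof'' to compare against, and your outline is in line with what those sources do.
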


\subsection{The Growth Diagram Associated to a Generic Polygon}
The main combinatorial tool that we will use in the proofs is that of growth diagrams. In what follows we will identify partitions with their Young diagrams. To define cylindrical growth diagrams consider the following staircase-shaped set of lattice points indexed by pairs of integers $(i,j)$. Here we consider $(i,j)$ to be the lattice point in the $i$th row and $j$th column with indices increasing down and to the right as in matrix notation. 
\begin{align*}
St_n=\{(i,j)\in \mathbb Z^2\mid 1\leq i\leq n+1, i\leq j\leq i+n\}
\end{align*}
\begin{Def}
Fix a rectangular partition $\pi=(k,\ldots,k)$ with $m+1$ rows. A cylindrical growth diagram for $SL_{m+1}$ of shape $\pi$ is an assignment of partitions $\gamma_{i,j}$, one for each $(i,j)$ in $St_n$, such that
\begin{itemize}
\item
the skew shapes of neighboring partitions $\gamma_{i,j}/\gamma_{i+1,j}$ and $\gamma_{i,j}/\gamma_{i,j-1}$ are vertical strips,
\item
$\gamma_{i,i}$ is the empty partition for all $i$, 
\item
$\gamma_{i,n+i}=\pi$ for all $i$, 
\item
and for each unit square the following local condition is satisfied.
\begin{align}\label{local condition}
\gamma_{i+1,j+1}=\sort(\gamma_{i+1,j}+\gamma_{i,j+1}-\gamma_{i,j})
\end{align}
\end{itemize}
The \emph{type} of a cylindrical growth diagram is $(\gamma_{1,2},\gamma_{2,3},\ldots,\gamma_{n,n+1})$. 
\end{Def}
These diagrams were originally defined in \cite{Spe} and \cite{Whi}, and studied in the $GL_{m+1}$ case in \cite{Akh}. See Example \ref{ex:octagon} below. Although not immediately obvious from the definition, cylindrical growth diagrams are periodic in the sense that $\gamma_{1,j}=\gamma_{1+n,j+n}$. As such, it is often convenient to extend the diagram to an infinite staircase of width $n$ where the index $i$ can be arbitrary, for fixed $i$ the index $j$ satisfies $i\leq j\leq i+n$, and the partition labels are extended periodically $\gamma_{i,j}=\gamma_{i+n,j+n}$. If the partitions $\gamma_{i,i},\ldots,\gamma_{i,i+n}$ along a row are interpreted as row-strict semistandard tableaux, then the local condition is the Bender--Knuth move, and successive rows are given by the promotion operator on row-strict semistandard tableaux. 

Note that the local condition can be used to fill in a partially labelled growth diagram from the northwest to the southeast. In particular, any row determines the entire diagram (using the periodicity property alluded to). The southeastern vertex label $\gamma_{i+1,j+1}$ of a unit square is determined by the other three labels. It is convenient to write the local condition as $\gamma_{i+1,j+1}=\sort(\gamma_{i,j+1}-(\gamma_{i,j}-\gamma_{i+1,j}))$, which can be interpreted as follows. The partitions $\gamma_{i,j}$, $\gamma_{i+1,j}$ differ by $1$ in some subset of positions $X$. Subtract~$1$ from $\gamma_{i,j+1}$ in the same positions $X$. This may not be a dominant weight, so sort it to make it dominant.

In what follows we will identify dominant $SL_{m+1}$-weights with equivalence classes of partitions containing at most $m+1$ parts. Since $SL_{m+1}$-weights are defined up to the all-ones vector $(1,\ldots,1)$, partitions differing by a column of height $m+1$ correspond to the same $SL_{m+1}$-weight. 

The following result relates the combinatorics of growth diagrams to the components of $\Poly(\vec\lambda)$. It is implicit in the work of Fontaine and Kamnitzer \cite{FK} and explained explicitly in the case of $GL_{m+1}$ in \cite{Akh}.

\begin{Thm}[\cite{FK, Akh}]\label{Thm:components}
Each component of $\Poly(\vec\lambda)$ contains an open dense set of points $P=\left([L_1],\ldots,[L_n]\right)$ such that the distances $d([L_i],[L_j])$ do not depend on $P$. There is a unique choice of representative partitions $\gamma_{i,j}$ for the $d([L_i],[L_j])$ such that they form a cylindrical growth diagram of type $\vec\lambda$ (here the $i,j$ indices in $d([L_i],[L_j])$ should be taken modulo n). Furthermore, this is a bijection between the components of $\Poly(\vec\lambda)$ and cylindrical growth diagrams of type $\vec\lambda$.
\end{Thm}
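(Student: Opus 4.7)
The plan is to establish three claims in sequence: (i) each component of $\Poly(\vec\lambda)$ admits a Zariski-dense open subset on which the pairwise distances $d([L_i],[L_j])$ are constant; (ii) these generic distances, under a canonical choice of partition representatives normalized by $\gamma_{i,i}=\emptyset$, satisfy all four axioms of a cylindrical growth diagram of type $\vec\lambda$; and (iii) the resulting assignment is a bijection. For (i), observe that $d$ is the label of the $PGL_{m+1}(\mathcal O)$-orbit on pairs of vertices of the building, hence pairwise distance is a constructible function on $\Poly(\vec\lambda)$. Since $\Poly(\vec\lambda)$ is equidimensional (Haines), each irreducible component contains a unique open dense stratum on which this constructible function is constant.

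For (ii), I would check the axioms in turn. The vertical-strip condition on $\gamma_{i,j}/\gamma_{i+1,j}$ follows from the Pieri rule in the affine Grassmannian: since $d([L_i],[L_{i+1}])=\lambda^i=\omega_k$ is a fundamental weight, the convolution $Gr^{\lambda}\tilde{\times}\,Gr^{\omega_k}$ decomposes into Schubert cells whose labels differ from $\lambda$ by a $k$-box vertical strip, forcing the same pattern on generic pairwise distances. The boundary values $\gamma_{i,i}=\emptyset$ and $\gamma_{i,i+n}=\pi$ follow from $[L_{n+i}]=[L_i]$ together with the uniqueness of partition representatives (working modulo columns of height $m+1$) once the rectangular padding $\pi$ is fixed. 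The heart of the argument is the local sort rule at each unit square: for four lattices $[L_i],[L_{i+1}],[L_j],[L_{j+1}]$ with fundamental-weight distances on adjacent sides, the remaining distance $\gamma_{i+1,j+1}$ must equal $\sort(\gamma_{i+1,j}+\gamma_{i,j+1}-\gamma_{i,j})$. I would prove this by a generic transversality computation: choose an apartment containing three of the four vertices, reduce to the weight lattice, and verify directly from the combinatorics of Pieri addition that the generic fourth distance is given precisely by this sort formula.

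For (iii), injectivity is immediate since distinct components carry distinct generic distance tuples. For surjectivity I would construct a polygon from a given growth diagram inductively: at each step produce a new lattice $[L_{j+1}]$ lying in the intersection of Schubert cells relative to all previously constructed $[L_k]$ whose prescribed pairwise distances are encoded in the diagram. The Pieri rule and inductive transversality ensure the intersection is nonempty of the expected dimension, and a generic choice produces a polygon realizing the full distance tuple. A dimension count matches automatically: by Theorem~\ref{Thm:Satake} the number of top components of $\Poly(\vec\lambda)$ equals $\dim \Inv(\vec\lambda)$, which equals the number of growth diagrams of type $\vec\lambda$ by the standard tableau description of invariants.

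The main obstacle is the verification of the local sort rule in (ii). It is a precise geometric identity about a $2\times 2$ configuration of lattices with two prescribed fundamental-weight relative positions, and although it reduces to a computation inside a single apartment, that computation requires careful bookkeeping of where the vertical strips are added and how the $\sort$ operator interacts with signed row differences. The remaining difficulty is ensuring that the inductive construction in (iii) actually lands in the component whose generic distance tuple is the given diagram, which follows from the openness of each stratum but must be argued using irreducibility of the cells being intersected.
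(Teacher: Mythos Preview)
The paper does not give its own proof of this statement; it is quoted as a result from \cite{FK} and \cite{Akh}, with the comment that it is ``implicit in the work of Fontaine and Kamnitzer \cite{FK} and explained explicitly in the case of $GL_{m+1}$ in \cite{Akh}.'' There is thus nothing in the present paper against which to compare your proposal.

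Your three-step outline (constructibility of the pairwise-distance function, verification of the growth-diagram axioms on the generic stratum, bijectivity via a dimension count plus an inductive construction) is the natural strategy and is broadly in the spirit of the cited references. One small correction to your step (ii): you plan to place three of the four lattices $[L_i],[L_{i+1}],[L_j],[L_{j+1}]$ in a common apartment, but in fact since $([L_i],[L_{i+1}])$ and $([L_j],[L_{j+1}])$ are each $1$-simplices, the building axioms already place all four vertices in a common apartment. The genuine subtlety is not fitting them into one apartment but rather that, once there, several relative positions of $[L_{i+1}]$ and $[L_{j+1}]$ are consistent with the three given distances, and the sort formula picks out only the \emph{generic} one; isolating that stratum and showing it is open and dense is exactly the transversality you allude to but do not carry out. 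Likewise in step (iii), the nonemptiness of the successive intersections of Schubert cells is precisely the content of the Pieri/PRV-type statements underlying the cited papers, so your sketch is correct in shape but defers the actual work to those references.
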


Let us now restrict to $m=2$. For a dominant weight $\lambda=a\omega_1+b\omega_2$ let $\lvert \lambda\rvert=a+b$, and for a partition $\gamma$ representing $\lambda$ define $\lvert\gamma\rvert=\lvert\lambda\rvert$. If $\lambda$ is a weight and $\gamma=(a,b,c)$ is a partition, we will often abuse notation and write $\lambda+\gamma$ to mean the corresponding weight $\lambda+(a,b,c)$. Likewise although $\omega_1$ and $\omega_2$ are weights, we will often also think of them as the partitions $(1,0,0)$ and $(1,1,0)$. Partitions should be thought of as specific representatives for weights, so that one gets a growth diagram. We will also use $\omega_3$ to denote the partition $(1,1,1)$, which is a representative for the zero weight.

\begin{Def}
We will say that a polygon $P=\left([L_1],\ldots,[L_n]\right)$ in $\Delta_2$ is \emph{generic} if it lies in one of the open dense sets of Theorem \ref{Thm:components}. Equivalently, the distances $d([L_i],[L_j])$ form a cylindrical growth diagram (for the unique choice of partition representatives $\gamma_{i,j}$).
\end{Def}

Note that since $d([L_j],[L_i])=d([L_i],[L_j])^*$, any growth diagram is determined by the triangle of partitions $\gamma_{i,j}$ for indices $1\leq i\leq n+1$ and $i\leq j\leq n+1$. In terms of partitions, this means that $\gamma_{j,i+n}=\gamma_{i,j}^c$, where $\gamma_{i,j}^c$ is gotten by rotating $\gamma_{i,j}$ by 180 degrees, placing it in the bottom right corner of the full rectangular Young diagram, and taking its complement in the rectangular Young diagram.

For two adjacent entries of a growth diagram, define $\dif(\gamma_{i,j},\gamma_{i+1,j})$ to be the indices of the rows in which the partitions differ. Hence it is a set of size $1$ or~$2$. This set can be seen as labelling the edge between the vertices $(i,j)$ and $(i+1,j)$ of the growth diagram, and likewise for the edge between vertices $(i,j)$ and $(i,j+1)$. We will need the following observation about growth diagrams. Define the orders $\{1\}<\{2\}<\{3\}$ and $\{1,2\}<\{1,3\}<\{2,3\}$ on size-$1$ and size-$2$ subsets of $\{1,2,3\}$ respectively.

\begin{Prop}\label{prop:difs}
\begin{enumerate}
Let $\{\gamma_{i,j}\}_{(i,j)\in St_n}$ be a cylindrical growth diagram.
\item
Suppose that $\gamma_{i,i+1}=\omega_1$. Then
\begin{itemize}
\item
$\dif(\gamma_{i,i+1},\gamma_{i+1,i+1})=\{1\}$, 
\item
$\dif(\gamma_{i,i+n},\gamma_{i+1,i+n})=\{3\}$, and 
\item
$\dif(\gamma_{i,j},\gamma_{i+1,j})$ is a set of size $1$ for all $j$ and increases monotonically from $1$ to $3$ as $j$ increases from $i+1$ to $i+n$.
\end{itemize}
\item
Suppose that $\gamma_{i,i+1}=\omega_2$. Then 
\begin{itemize}
\item
$\dif(\gamma_{i,i+1},\gamma_{i+1,i+1})=\{1,2\}$, 
\item
$\dif(\gamma_{i,i+n},\gamma_{i+1,i+n})=\{2,3\}$, and 
\item
$\dif(\gamma_{i,j},\gamma_{i+1,j})$ is a set of size $2$ for all $j$ and increases monotonically from $\{1,2\}$ to $\{2,3\}$ as $j$ increases from $i+1$ to $i+n$.
\end{itemize}
\end{enumerate}
\end{Prop}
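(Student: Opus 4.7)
The plan is to establish three claims: (i) the size $|X_j|$ of $X_j := \dif(\gamma_{i,j},\gamma_{i+1,j})$ is independent of $j$; (ii) the boundary values at $j=i+1$ and $j=i+n$ match the ones stated; and (iii) $X_j$ is coordinatewise monotone nondecreasing in $j$ along the row. Combining these three forces $X_j$ to step monotonically through the listed subsets, proving the proposition.

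For (i), note $|X_j| = |\gamma_{i,j}| - |\gamma_{i+1,j}|$ since the two partitions differ by a $0/1$ vector. The local rule $\gamma_{i+1,j+1} = \sort(\gamma_{i,j+1} - (\gamma_{i,j} - \gamma_{i+1,j}))$ preserves total size under sorting, so this difference is independent of $j$ and equals $|\gamma_{i,i+1}|$, which is $1$ if $\gamma_{i,i+1}=\omega_1$ and $2$ if $\gamma_{i,i+1}=\omega_2$. For (ii), since $\gamma_{i+1, i+1} = \emptyset$ the set $X_{i+1}$ equals the support of $\gamma_{i, i+1}$, giving $\{1\}$ or $\{1,2\}$ in the two cases. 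At the other end I use the complementation identity $\gamma_{i+1, i+n} = \gamma_{i,i+1}^c$ recalled just before the definition of genericity, together with the direct computations $\omega_1^c = (k,k,k-1)$ and $\omega_2^c = (k,k-1,k-1)$, yielding $X_{i+n} = \{3\}$ or $\{2,3\}$ respectively.

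For (iii), set $u_j = \gamma_{i,j} - \gamma_{i+1,j}$ (a $0/1$ vector supported on $X_j$), $v_{i,j} = \gamma_{i,j+1} - \gamma_{i,j}$, and $v_{i+1,j} = \gamma_{i+1,j+1} - \gamma_{i+1,j}$. The local rule rewrites as $\gamma_{i+1,j} + v_{i+1,j} = \sort(\gamma_{i+1,j} + v_{i,j})$, so within each maximal plateau of $\gamma_{i+1,j}$ the $1$'s of $v_{i,j}$ are pushed to the top of the plateau to form $v_{i+1,j}$. Using that $\gamma_{i,j+1}$ must be weakly decreasing, one sees that within each such plateau both $u_j$ and $v_{i,j}$ have their $1$'s forming top-prefixes in the top sub-plateau (where $\gamma_{i,j}$ is $1$ larger than $\gamma_{i+1,j}$) and in the bottom sub-plateau (where they agree). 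Tracking $u_{j+1} = u_j + v_{i,j} - v_{i+1,j}$ plateau-by-plateau then shows that each $1$ of $u_j$ either stays in place or shifts to a strictly larger index by exactly the number of $1$'s that $v_{i,j}$ placed in the bottom sub-plateau, so $X_{j+1} \geq X_j$ coordinatewise.

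The main obstacle is (iii); the per-plateau bookkeeping in the final step is delicate since it requires simultaneously tracking the supports of $u_j$, $v_{i,j}$, and $v_{i+1,j}$ inside each plateau of $\gamma_{i+1,j}$ to confirm that the sort only shifts $u_j$'s entries rightward. The other claims reduce to direct computations from the definitions and the complementation identity.
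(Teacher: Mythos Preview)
Your proof is correct, but you take a longer road to monotonicity than the paper does. The paper uses the other form of the local rule,
\[
\gamma_{i+1,j+1}=\sort\bigl(\gamma_{i,j+1}-u_j\bigr),
\]
so that $u_{j+1}=\gamma_{i,j+1}-\sort(\gamma_{i,j+1}-u_j)$: one subtracts $1$ from $\gamma_{i,j+1}$ in the position(s) of $X_j$ and sorts. It is then a one-line observation that subtracting $1$ from a partition in position $p$ and sorting produces a partition differing from the original in some position $q\geq p$ (the decremented entry slides to the bottom of its plateau in $\gamma_{i,j+1}$). This gives $X_{j+1}\geq X_j$ immediately, with no need to introduce $v_{i,j}$ or $v_{i+1,j}$ or to align three supports inside each plateau of $\gamma_{i+1,j}$. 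Your route via $\gamma_{i+1,j+1}=\sort(\gamma_{i+1,j}+v_{i,j})$ forces you to track how the $1$'s of $v_{i,j}$ redistribute under sorting and then cancel against $u_j$; the bookkeeping you sketch is right, but it is exactly the ``delicate'' step you flag, and it is avoidable.

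For the endpoint $X_{i+n}$ you invoke the complementation identity $\gamma_{i+1,i+n}=\gamma_{i,i+1}^c$, which is fine. The paper instead gets it for free from monotonicity together with $\gamma_{i,i+n}$ being a rectangle (so the only position in which one can subtract and remain a partition is the last row). Either argument works; the paper's has the mild advantage of not appealing to anything beyond the local rule and the shape condition on the final column.
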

\begin{proof}
We will prove part (a) with part (b) being similar. The first bullet point follows since $\gamma_{i,i+1}=\omega_1$ and $\gamma_{i+1,i+1}=\emptyset$. For simplicity assume that $i=1$. The local condition can be written as $\gamma_{2,j+1}=\sort\left(\gamma_{1,j+1}-(\gamma_{1,j}-\gamma_{2,j})\right)$. This can be interpreted as subtracting one from $\gamma_{1,j+1}$ in position $\dif(\gamma_{1,j},\gamma_{2,j})$ and sorting to get $\gamma_{2,j+1}$. Then $\dif(\gamma_{1,j+1},\gamma_{2,j+1})$ must also have size $1$. The resulting index in $\dif(\gamma_{1,j+1},\gamma_{2,j+1})$ must be equal to or larger than $\dif(\gamma_{1,j},\gamma_{2,j})$. This establishes the third bullet point. The second bullet point follows from the third and because $\gamma_{1,n+1}$ is a rectangle.
\end{proof}

\begin{Ex}\label{ex:octagon}
Before beginning the proofs in the next section, we give an example of a generic polygon. Consider the following cylindrical growth diagram of type $(\omega_1,\omega_2,\omega_1,\omega_2,\omega_1,\omega_2,\omega_1,\omega_2)$ and $n=8$.

\begin{minipage}{\linewidth}
\scalemath{.75}{
\ytableausetup{boxsize=1.5mm}
\begin{array}{cccccccccccccccccc}
&1&2&3&4&5&6&7&8&9&10&11&12&13&14&15&16&17\\
1&\emptyset&\ydiagram{1, 0, 0}& \ydiagram{2, 1, 0}& \ydiagram{2, 2, 0}& \ydiagram{3, 2, 1}& \ydiagram{3, 3, 1}& \ydiagram{4, 3, 2}& \ydiagram{4, 3, 3}& \ydiagram{4, 4, 4}\\
2&&\emptyset& \ydiagram{1, 1, 0}& \ydiagram{2, 1, 0}& \ydiagram{3, 1, 1}& \ydiagram{3, 2, 1}& \ydiagram{4, 2, 2}& \ydiagram{4, 3, 2}& \ydiagram{4, 4, 3}& \ydiagram{4, 4, 4}\\
3&&&\emptyset& \ydiagram{1, 0, 0}& \ydiagram{2, 1, 0}& \ydiagram{2, 2, 0}& \ydiagram{3, 2, 1}& \ydiagram{3, 3, 1}& \ydiagram{4, 3, 2}& \ydiagram{4, 3, 3}& \ydiagram{4, 4, 4}\\
4&&&&\emptyset& \ydiagram{1, 1, 0}& \ydiagram{2, 1, 0}& \ydiagram{3, 1, 1}& \ydiagram{3, 2, 1}& \ydiagram{4, 2, 2}& \ydiagram{4, 3, 2}& \ydiagram{4, 4, 3}& \ydiagram{4, 4, 4}\\
5&&&&&\emptyset& \ydiagram{1, 0, 0}& \ydiagram{2, 1, 0}& \ydiagram{2, 2, 0}& \ydiagram{3, 2, 1}& \ydiagram{3, 3, 1}& \ydiagram{4, 3, 2}& \ydiagram{4, 3, 3}& \ydiagram{4, 4, 4}\\
6&&&&&&\emptyset& \ydiagram{1, 1, 0}& \ydiagram{2, 1, 0}& \ydiagram{3, 1, 1}& \ydiagram{3, 2, 1}& \ydiagram{4, 2, 2}& \ydiagram{4, 3, 2}& \ydiagram{4, 4, 3}& \ydiagram{4, 4, 4}\\
7&&&&&&&\emptyset& \ydiagram{1, 0, 0}& \ydiagram{2, 1, 0}& \ydiagram{2, 2, 0}& \ydiagram{3, 2, 1}& \ydiagram{3, 3, 1}& \ydiagram{4, 3, 2}& \ydiagram{4, 3, 3}& \ydiagram{4, 4, 4}\\
8&&&&&&&&\emptyset& \ydiagram{1, 1, 0}& \ydiagram{2, 1, 0}& \ydiagram{3, 1, 1}& \ydiagram{3, 2, 1}& \ydiagram{4, 2, 2}& \ydiagram{4, 3, 2}& \ydiagram{4, 4, 3}& \ydiagram{4, 4, 4}\\
9&&&&&&&&&\emptyset& \ydiagram{1, 0, 0}& \ydiagram{2, 1, 0}& \ydiagram{2, 2, 0}& \ydiagram{3, 2, 1}& \ydiagram{3, 3, 1}& \ydiagram{4, 3, 2}& \ydiagram{4, 3, 3}& \ydiagram{4, 4, 4}
\end{array}}
\end{minipage}

Let $e_1,e_2,e_3$ be the standard basis for $\mathcal K^3$. A generic polygon in the component corresponding to this diagram consists of the following eight lattice classes where we list an $\mathcal O$-basis for each lattice representative. Note that the corresponding vertices in the building are not contained in a common apartment. 
\begin{align*}
L_1&=e_1,e_2,e_3 & L_5&=t^{-1}e_1,t^{-2}e_2,e_3\\
L_2&=t^{-1}e_1,e_2,e_3 & L_6&=t^{-1}e_1,t^{-2}e_2,t^{-2}e_1+t^{-1}e_3\\
L_3&=t^{-2}e_1,t^{-1}e_2,e_3 & L_7&=t^{-1}e_1,t^{-1}e_2,t^{-2}e_1+t^{-2}e_2+t^{-1}e_3\\
L_4&=t^{-2}e_1,t^{-2}e_2,e_3 & L_8&=t^{-1}e_1+t^{-1}e_2,e_2,e_3
\end{align*}

By direct computation $\maxconv(P)$ contains the additional lattice classes corresponding to the lattices with $\mathcal O$-bases $(e_1,t^{-1}e_2,e_3)$, $(t^{-2}e_1,t^{-1}e_2,t^{-2}e_2+t^{-1}e_3)$, and $(t^{-1}e_1,t^{-1}e_2,e_3)$. The vertices in $\minconv(P)$, other than those in $P$, have lattice representatives $(t^{-1}e_1,t^{-1}e_2,t^{-2}e_1+t^{-1}e_3)$, $(t^{-2}e_1+t^{-2}e_2,t^{-1}e_2,e_3)$ and $(t^{-1}e_1,t^{-1}e_2,e_3)$. Note that the lattice class with representative $(t^{-1}e_1,t^{-1}e_2,e_3)$ is contained in both. It is the middle vertex of the octagon in the triangulation of $P$ given by $\conv(P)$ shown below.
\begin{align*}
\scalemath{.8}{
\begin{tikzpicture}
\fill[gray] (0,-2) -- (1.2,-.5) -- (0,0);
\fill[gray] (0,2) -- (1.2,-.5) -- (0,0);
\draw[ultra thick] (0,-2) -- (1.2,-.5) -- (0,0);
\draw[ultra thick] (0,2) -- (1.2,-.5) -- (0,0);
\foreach \t in {0,45,...,315} {
    \fill[lightgray, nearly opaque] (0,0) -- (\t:2) -- (\t+45:2); 
    \draw (0,0) -- (\t:2) -- (\t+45:2);};
\node at (0,0) [circle, fill, inner sep=2pt] {};
\fill[lightgray, nearly opaque] (-2,0) -- (-.5,1.2) -- (0,0);
\fill[lightgray, nearly opaque] (2,0) -- (-.5,1.2) -- (0,0);
\draw (-2,0) -- (-.5,1.2) -- (0,0);
\draw (2,0) -- (-.5,1.2) -- (0,0);
\node at (0,-2.5) {$\maxconv(P)$};
\end{tikzpicture}
\hspace{5mm}
\begin{tikzpicture}
\fill[gray] (225:2) -- (.5,-1.2) -- (0,0);
\fill[gray] (45:2) -- (.5,-1.2) -- (0,0);
\draw[ultra thick] (225:2) -- (.5,-1.2) -- (0,0);
\draw[ultra thick] (45:2) -- (.5,-1.2) -- (0,0);
\foreach \t in {0,45,...,315} {
    \fill[lightgray, nearly opaque] (0,0) -- (\t:2) -- (\t+45:2); 
    \draw (0,0) -- (\t:2) -- (\t+45:2);};
\node at (0,0) [circle, fill, inner sep=2pt] {};
\fill[lightgray, nearly opaque] (135:2) -- (.5,1.2) -- (0,0);
\fill[lightgray, nearly opaque] (-45:2) -- (.5,1.2) -- (0,0);
\draw (135:2) -- (.5,1.2) -- (0,0);
\draw (-45:2) -- (.5,1.2) -- (0,0);
\node at (0,-2.5) {$\minconv(P)$};
\end{tikzpicture}
\hspace{5mm}
\begin{tikzpicture}
\foreach \t in {0,45,...,315} {
    \fill[lightgray, nearly opaque] (0,0) -- (\t:2) -- (\t+45:2); 
    \draw (0,0) -- (\t:2) -- (\t+45:2);};
\foreach \t in {0,90,...,280}{
	\draw[arrowfrom] (\t:2) -- (0,0);
	\draw[arrowto] (\t:2) -- (\t+45:2);
	\draw[arrowto] (\t:2) -- (\t-45:2);
	\draw[arrowto] (\t+45:2) -- (0,0);
	};
\node at (0,0) [circle, fill, inner sep=2pt] {};
\node at (0,-2.8) {$\conv(P)$};
\node[left] at (180:2) {$[L_1]$};
\node[left] at (135:2) {$[L_2]$};
\node[above] at (90:2) {$[L_3]$};
\node[right] at (45:2) {$[L_4]$};
\node[right] at (0:2) {$[L_5]$};
\node[right] at (-45:2) {$[L_6]$};
\node[below] at (-90:2) {$[L_7]$};
\node[left] at (-135:2) {$[L_8]$};
\end{tikzpicture}}
\end{align*}
\end{Ex}

\section{Proof of the Main Theorem}\label{sec:proof}
The proof of the main theorem will be by induction on $n$, the size of the generic polygon $P$ in $\Delta_2$. The idea is to make local changes to $P$, while keeping track of the set $\conv$. Two of the local changes immediately decrease the number of vertices, which occur in the following situations. We say that a polygon in $\Delta_2$ has a \emph{U-turn configuration} at three consecutive vertices $[L_1], [L_2], [L_3]$ if $[L_1]=[L_3]$. We say that a polygon has a \emph{sharp-corner configuration} at three consecutive vertices $[L_1], [L_2], [L_3]$ if $[L_1]$ and $[L_3]$ are adjacent vertices in $\Delta_2$. We collect in the following propositions the effects of these local changes. Each proposition states that altering the polygon preserves the growth diagram condition and that $\conv(P)$ changes in an expected way.

\begin{figure}
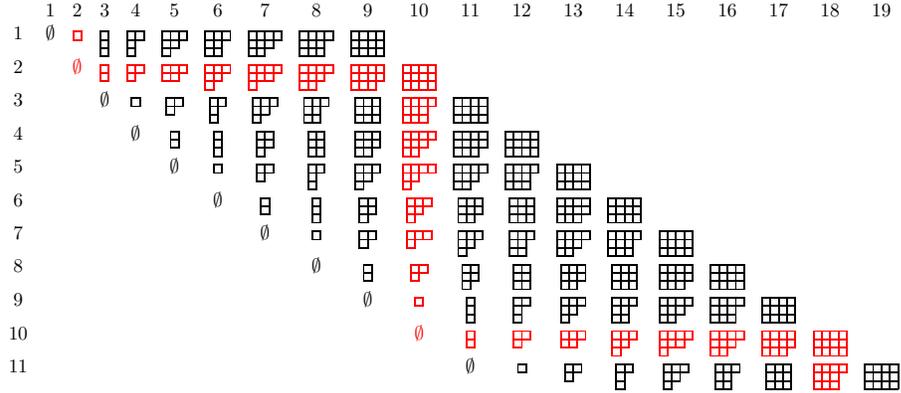

\ytableausetup{boxsize=1.5mm}
\centerline{
  \begin{minipage}{\linewidth}
\begin{align*}
\scalemath{.665}{
\begin{array}{cccccccccccccccccccc}
&1&2&3&4&5&6&7&8&9&10&11&12&13&14&15&16&17&18&19\\
1&\emptyset&\color{red} \ydiagram{1, 0, 0}& \ydiagram{1, 1, 1}& \ydiagram{2, 1, 1}& \ydiagram{3, 2, 1}& \ydiagram{3, 2, 2}& \ydiagram{4, 3, 2}& \ydiagram{4, 3, 3}& \ydiagram{4, 4, 4}\\
2&&\color{red}\emptyset& \color{red}\ydiagram{1, 1, 0}&\color{red} \ydiagram{2, 1, 0}&\color{red} \ydiagram{3, 2, 0}& \color{red}\ydiagram{3, 2, 1}& \color{red}\ydiagram{4, 3, 1}&\color{red} \ydiagram{4, 3, 2}&\color{red} \ydiagram{4, 4, 3}&\color{red} \ydiagram{4, 4, 4}\\
3&&&\emptyset& \ydiagram{1, 0, 0}& \ydiagram{2, 1, 0}& \ydiagram{2, 1, 1}& \ydiagram{3, 2, 1}& \ydiagram{3, 2, 2}& \ydiagram{3, 3, 3}& \color{red}\ydiagram{4, 3, 3}& \ydiagram{4, 4, 4}\\
4&&&&\emptyset& \ydiagram{1, 1, 0}& \ydiagram{1, 1, 1}& \ydiagram{2, 2, 1}& \ydiagram{2, 2, 2}& \ydiagram{3, 3, 2}& \color{red}\ydiagram{4, 3, 2}& \ydiagram{4, 4, 3}& \ydiagram{4, 4, 4}\\
5&&&&&\emptyset& \ydiagram{1, 0, 0}& \ydiagram{2, 1, 0}& \ydiagram{2, 1, 1}& \ydiagram{3, 2, 1}& \color{red}\ydiagram{4, 2, 1}& \ydiagram{4, 3, 2}& \ydiagram{4, 3, 3}& \ydiagram{4, 4, 4}\\
6&&&&&&\emptyset& \ydiagram{1, 1, 0}& \ydiagram{1, 1, 1}& \ydiagram{2, 2, 1}& \color{red}\ydiagram{3, 2, 1}& \ydiagram{3, 3, 2}& \ydiagram{3, 3, 3}& \ydiagram{4, 4, 3}& \ydiagram{4, 4, 4}\\
7&&&&&&&\emptyset& \ydiagram{1, 0, 0}& \ydiagram{2, 1, 0}&\color{red} \ydiagram{3, 1, 0}& \ydiagram{3, 2, 1}& \ydiagram{3, 2, 2}& \ydiagram{4, 3, 2}& \ydiagram{4, 3, 3}& \ydiagram{4, 4, 4}\\
8&&&&&&&&\emptyset& \ydiagram{1, 1, 0}&\color{red} \ydiagram{2, 1, 0}& \ydiagram{2, 2, 1}& \ydiagram{2, 2, 2}& \ydiagram{3, 3, 2}& \ydiagram{3, 3, 3}& \ydiagram{4, 4, 3}& \ydiagram{4, 4, 4}\\
9&&&&&&&&&\emptyset&\color{red} \ydiagram{1, 0, 0}& \ydiagram{1, 1, 1}& \ydiagram{2, 1, 1}& \ydiagram{3, 2, 1}& \ydiagram{3, 2, 2}& \ydiagram{4, 3, 2}& \ydiagram{4, 3, 3}& \ydiagram{4, 4, 4}\\
10&&&&&&&&&&\color{red}\emptyset&\color{red} \ydiagram{1, 1, 0}&\color{red} \ydiagram{2, 1, 0}&\color{red} \ydiagram{3, 2, 0}&\color{red} \ydiagram{3, 2, 1}&\color{red} \ydiagram{4, 3, 1}&\color{red} \ydiagram{4, 3, 2}&\color{red} \ydiagram{4, 4, 3}&\color{red} \ydiagram{4, 4, 4}\\
11&&&&&&&&&&&\emptyset& \ydiagram{1, 0, 0}& \ydiagram{2, 1, 0}& \ydiagram{2, 1, 1}& \ydiagram{3, 2, 1}& \ydiagram{3, 2, 2}& \ydiagram{3, 3, 3}&\color{red} \ydiagram{4, 3, 3}& \ydiagram{4, 4, 4}
\end{array}}
\end{align*}
\end{minipage}}
\caption{A growth diagram for a polygon containing a U-turn. Partitions representing distances involving $[L_2]$ are shown in red.}
\label{U-turn}
\end{figure}
\begin{figure}
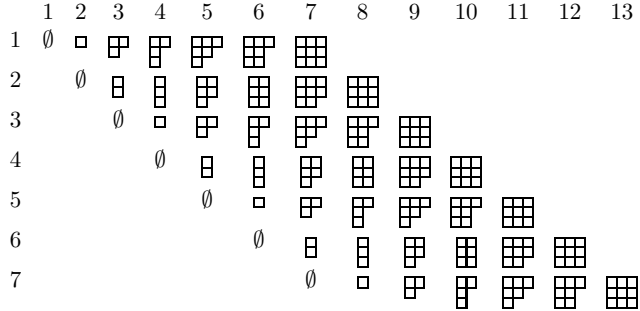

\ytableausetup{boxsize=1.5mm}
\centerline{
  \begin{minipage}{\linewidth}
\begin{align*}
\scalemath{.8}{
\begin{array}{cccccccccccccccccccc}
&1&2&3&4&5&6&7&8&9&10&11&12&13\\
1&\emptyset& \ydiagram{1, 0, 0}& \ydiagram{2, 1, 0}& \ydiagram{2, 1, 1}& \ydiagram{3, 2, 1}& \ydiagram{3, 2, 2}& \ydiagram{3, 3, 3} \\
2&&\emptyset& \ydiagram{1, 1, 0}& \ydiagram{1, 1, 1}& \ydiagram{2, 2, 1}& \ydiagram{2, 2, 2}& \ydiagram{3, 3, 2}&   \ydiagram{3, 3, 3}\\
3&&&\emptyset& \ydiagram{1, 0, 0}& \ydiagram{2, 1, 0}& \ydiagram{2, 1, 1}& \ydiagram{3, 2, 1}&  \ydiagram{3, 2, 2}& \ydiagram{3, 3, 3}\\
4&&&&\emptyset& \ydiagram{1, 1, 0}& \ydiagram{1, 1, 1}& \ydiagram{2, 2, 1}&   \ydiagram{2, 2, 2}& \ydiagram{3, 3, 2}& \ydiagram{3, 3,3}\\
5&&&&&\emptyset& \ydiagram{1, 0, 0}& \ydiagram{2, 1, 0}& \ydiagram{2, 1, 1}& \ydiagram{3, 2, 1}& \ydiagram{3, 2, 2}& \ydiagram{3, 3, 3}\\
6&&&&&&\emptyset& \ydiagram{1, 1, 0}&  \ydiagram{1, 1, 1}& \ydiagram{2, 2, 1}& \ydiagram{2, 2, 2}& \ydiagram{3, 3, 2}& \ydiagram{3, 3, 3}\\
7&&&&&&&\emptyset& \ydiagram{1, 0, 0}& \ydiagram{2, 1, 0}& \ydiagram{2, 1, 1}& \ydiagram{3, 2, 1}& \ydiagram{3, 2, 2}& \ydiagram{3, 3, 3} \\
\end{array}}
\end{align*}
\end{minipage}}
\caption{The growth diagram resulting from the diagram in Figure \ref{U-turn} with the red distances removed and corresponding entries identified as in the proof of Proposition \ref{prop:U-turn}.}
\label{U-turn removed}
\end{figure}

\begin{Prop}\label{prop:U-turn}
Let $P$ be a generic polygon in $\Delta_2$. If $P$ contains a U-turn at $[L_1],[L_2],[L_3]$, then the polygon with $[L_2]$ removed and $[L_1]$ identified with $[L_3]$ is a generic polygon. Furthermore, $\conv(P)=\conv(P\setminus [L_2])\cup \{[L_2]\}$.
\end{Prop}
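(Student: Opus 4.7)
I would prove the two claims separately: that $P' := (P \setminus \{[L_2]\})$ with $[L_1]$ identified to $[L_3]$ is generic, and that $\conv(P) = \conv(P') \cup \{[L_2]\}$.

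For the first claim, by Theorem~\ref{Thm:components} it suffices to verify that the pairwise distances of $P'$ extend to a cylindrical growth diagram. The U-turn hypothesis $[L_1] = [L_3]$ forces $\gamma_{1,3}$, which must represent the zero weight, to be the partition $\omega_3 = (1,1,1)$. Using the local condition and induction on $j$, I would show that $\gamma_{1,j} = \gamma_{3,j} + \omega_3$ for all $j \geq 3$; in particular $\gamma_{3,n+1} = (k-1,k-1,k-1)$ whenever the original diagram has rectangle $(k,k,k)$. The sub-diagram indexed by rows and columns $\geq 3$, with partition representatives renormalized to the smaller rectangle, will then satisfy all the growth-diagram axioms for $P'$: the interior local conditions are inherited from the original diagram, and the new rectangle is $(k-1,k-1,k-1)$.

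For the conv equality, the inclusion $\conv(P) \supseteq \conv(P') \cup \{[L_2]\}$ is immediate. For the reverse inclusion, I would invoke Proposition~\ref{prop:conv} to reduce the claim to $\conv([L_2],[L_j]) \subseteq \conv(P') \cup \{[L_2]\}$ for each $j \neq 2$. When $\conv([L_2],[L_j])$ is trivial (i.e., $d([L_2],[L_j])$ has nontrivial components in both $\omega_1$ and $\omega_2$), the containment is immediate. Otherwise $\conv([L_2],[L_j])$ is a straight-path geodesic, and I would use Proposition~\ref{prop:difs} combined with the U-turn identity $\gamma_{1,j} = \gamma_{3,j} + \omega_3$. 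Assuming $d([L_1],[L_2]) = \omega_1$ (the other orientation being symmetric), when $d([L_2],[L_j]) = k\omega_2$ the dif analysis forces $d([L_1],[L_j]) = (k-1)\omega_2$; then $[L_1]=[L_3]$ lies on the unique $\omega_2$-geodesic from $[L_2]$ to $[L_j]$ and $\conv([L_2],[L_j]) \setminus \{[L_2]\} = \conv([L_1],[L_j]) \subseteq \conv(P')$. When $d([L_2],[L_j]) = k\omega_1$, a parallel analysis narrows $d([L_1],[L_j])$ to $(k+1)\omega_1$ or $(k-1)\omega_1 + \omega_2$; in the first subcase $[L_2]$ lies on the $\omega_1$-geodesic from $[L_1]$ to $[L_j]$ and the claim follows, while the second subcase requires exhibiting the intermediate geodesic vertices on convex hulls of other pairs in $P'$.

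The main obstacle will be this last subcase, where $\conv([L_1],[L_j])$ is trivial and the $\omega_1$-direction from $[L_2]$ is not aligned with the step to $[L_1]$ (since $d([L_2],[L_1]) = \omega_2$). Covering the intermediate $\omega_1$-geodesic vertices should require a more global argument, likely by walking along the polygon to an adjacent vertex of $P'$ whose straight-path convex hull with $[L_j]$ captures those vertices, or by invoking the dual picture $d([L_j],[L_2]) = k\omega_2$ and applying the $\omega_2$-side analysis from $[L_j]$.
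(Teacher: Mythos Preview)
Your treatment of the first claim (genericity of $P'$) matches the paper's: remove row and column $2$, use $\gamma_{1,j}=\gamma_{3,j}+(1,1,1)$ to merge, and observe the local conditions are inherited.

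For the $\conv$ claim you have the right reduction via Proposition~\ref{prop:conv}, but your case analysis is more complicated than necessary and leaves what you call an ``obstacle'' unresolved. The paper's key observation dissolves this obstacle entirely. From $\gamma_{1,3}=(1,1,1)$ and $\gamma_{2,3}=(1,1,0)$ one has $\dif(\gamma_{1,3},\gamma_{2,3})=\{3\}$, which is already the maximal value in the monotone sequence of Proposition~\ref{prop:difs}; hence $\dif(\gamma_{1,j},\gamma_{2,j})=\{3\}$ for \emph{every} $j\geq 3$. This single fact gives $|\gamma_{2,j}|=|\gamma_{1,j}|+1$ uniformly, so $[L_1]=[L_3]$ lies on every combinatorial geodesic from $[L_2]$ to $[L_j]$, and any nontrivial $\conv([L_2],[L_j])$ is exactly $\conv([L_1],[L_j])\cup\{[L_2]\}$.

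In particular, your problematic case $d([L_2],[L_j])=k\omega_1$ is vacuous: it would force $\gamma_{2,j}=(a,b,b)$ with $a>b$, and then $\gamma_{1,j}=\gamma_{2,j}+(0,0,1)=(a,b,b+1)$ is not a partition. Both of your $k\omega_1$ subcases correspond to $\dif\in\{\{1\},\{2\}\}$, which the monotonicity argument rules out. So the ``walking along the polygon'' or dualizing workaround you sketch is never needed; the proof closes in one line once you notice the $\dif$ sequence is already pinned at $\{3\}$ from column $3$ onward.
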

\begin{proof}
Let $\gamma_{ij}$ be the partitions representing the pairwise distances $d([L_i],[L_j])$ in the growth diagram for $P$. Suppose that $\gamma_{12}=\omega_1$, $\gamma_{23}=\omega_2$, and $\gamma_{13}=\omega_3$, the other case being similar. See Figure $\ref{U-turn}$ for an example of such a growth diagram. We must show that removing the vertex $[L_2]$ and identifying $[L_1]$ with $[L_3]$ yields a generic polygon - that the corresponding pairwise distances yield a valid cylindrical growth diagram. Removing the vertex $[L_2]$ corresponds to removing the row entries $\gamma_{2,j}$ and column entries $\gamma_{i,2}$ from the growth diagram. Identifying the vertices $[L_1]$ and $[L_3]$ does not change any of the distances $d([L_i],[L_j])$. Since $[L_1]=[L_3]$, we have $d([L_1],[L_j])=d([L_3],[L_j])$ for all valid $j$, so that as partitions $\gamma_{1,j}$ and $\gamma_{3,j}$ differ by a full column. More specifically, $\gamma_{1,j}=\gamma_{3,j}+(1,1,1)$ for all valid $j$. Similarly, $\gamma_{i,n+1}+(1,1,1)=\gamma_{i,n+3}$ for all valid $i$.

Consider the triangle of partitions $\gamma_{i,j}$ for indices $3\leq i\leq n+1$ and $n+3\leq j\leq n+i$. Remove a full column  $(1,1,1)$ from each partition in this triangle to get the partitions $\gamma_{i,j}'$. This allows us to merge the columns $\gamma_{i,n+1}=\gamma_{i,n+3}'$. Similarly, merge the first and third rows. See Figure \ref{U-turn} and the resulting diagram in Figure \ref{U-turn removed}. The new diagram satisfies the local condition in every unit square because the unit square came from the original diagram, or it came from one in the original diagram whose partitions all decreased by a full column.

Now we show that $\conv(P)=\conv(P\setminus [L_2])\cup \{[L_2]\}$. Since $\dif(\gamma_{13},\gamma_{23})=\{3\}$, Proposition \ref{prop:difs} implies that $\dif(\gamma_{1,j},\gamma_{2,j})=\{3\}$ for all $j\geq 3$. Hence, for all $j$ we have $\lvert \gamma_{2,j}\rvert=\lvert \gamma_{1,j}\rvert+1$. That is, for all $j$ the vertex $[L_1]=[L_3]$ is strictly closer to $[L_j]$ than $[L_2]$. Placing the edge $([L_1],[L_2])$ in a common apartment with $[L_j]$, we see that any nontrivial $\conv([L_2],[L_j])$ is equal to $\conv([L_1],[L_j])\cup \{[L_2]\}$. Together with Proposition \ref{prop:conv} the result follows.
\end{proof}

\begin{figure}[b]
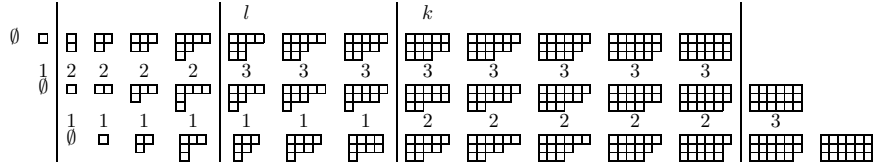

\centerline{
  \begin{minipage}{\linewidth}
\begin{align*}
\scalemath{.69}{
\ytableausetup{boxsize=1.5mm}
\begin{array}{ccc|cccc|ccc|ccccc|cc}
&&&&&&&l&&&k&&&&&\\
&\emptyset&\ydiagram{1}&\ydiagram{1,1}&\ydiagram{2,1}&\ydiagram{3,2}&\ydiagram{4,2,1}&\ydiagram{4,2,2}&\ydiagram{5,3,2}&\ydiagram{5,4,2}&\ydiagram{5,5,3}&\ydiagram{6,5,3}&\ydiagram{6,5,4}&\ydiagram{6,6,5}&\ydiagram{6,6,6}&\\
&&1&2&2&2&2&3&3&3&3&3&3&3&3\\[-1.5mm]
&&\emptyset&\ydiagram{1}&\ydiagram{2}&\ydiagram{3,1}&\ydiagram{4,1,1}&\ydiagram{4,2,1}&\ydiagram{5,3,1}&\ydiagram{5,4,1}&\ydiagram{5,5,2}&\ydiagram{6,5,2}&\ydiagram{6,5,3}&\ydiagram{6,6,4}&\ydiagram{6,6,5}&\ydiagram{6,6,6}&\\
&&&1&1&1&1&1&1&1&2&2&2&2&2&3\\[-1.5mm]
&&&\emptyset&\ydiagram{1}&\ydiagram{2,1}&\ydiagram{3,1,1}&\ydiagram{3,2,1}&\ydiagram{4,3,1}&\ydiagram{4,4,1}&\ydiagram{5,4,2}&\ydiagram{6,4,2}&\ydiagram{6,4,3}&\ydiagram{6,5,4}&\ydiagram{6,5,5}&\ydiagram{6,6,5}&\ydiagram{6,6,6}\\
\end{array}}
\end{align*}
\end{minipage}}
\caption{First three rows of a growth diagram for a polygon that contains a sharp corner. The column indices $l$ and $k$ from the proof of Proposition \ref{prop:sharp corner} are indicated. \label{fig:sharp corner}}
\label{sharp}
\end{figure}
\begin{figure}
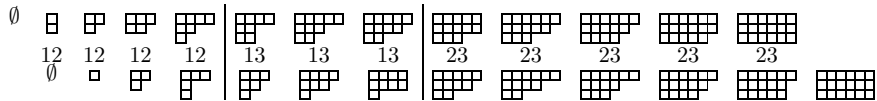

\centerline{
  \begin{minipage}{\linewidth}
\begin{align*}
\scalemath{.77}{
\ytableausetup{boxsize=1.5mm}
\begin{array}{ccccc|ccc|cccccccc}
\emptyset&\ydiagram{1,1}&\ydiagram{2,1}&\ydiagram{3,2}&\ydiagram{4,2,1}&\ydiagram{4,2,2}&\ydiagram{5,3,2}&\ydiagram{5,4,2}&\ydiagram{5,5,3}&\ydiagram{6,5,3}&\ydiagram{6,5,4}&\ydiagram{6,6,5}&\ydiagram{6,6,6}&\\
&12&12&12&12&13&13&13&23&23&23&23&23&&\\[-1.5mm]
&\emptyset&\ydiagram{1}&\ydiagram{2,1}&\ydiagram{3,1,1}&\ydiagram{3,2,1}&\ydiagram{4,3,1}&\ydiagram{4,4,1}&\ydiagram{5,4,2}&\ydiagram{6,4,2}&\ydiagram{6,4,3}&\ydiagram{6,5,4}&\ydiagram{6,5,5}&\ydiagram{6,6,6}\\
\end{array}}
\end{align*}
\end{minipage}}
\caption{The growth diagram from Figure \ref{sharp} with distances corresponding to $[L_2]$ removed.}
\label{sharp removed}
\end{figure}

\begin{Prop}\label{prop:sharp corner}
Let $P$ be a generic polygon in $\Delta_2$. If $P$ contains a sharp corner at $[L_1],[L_2],[L_3]$, then $P$ with $[L_2]$ removed and $[L_1]$ connected to $[L_3]$ is a generic polygon. Furthermore, $\conv(P)=\conv(P\setminus [L_2])\cup \{[L_2]\}$. 
\end{Prop}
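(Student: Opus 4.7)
The plan is to mirror the two-step structure of the proof of Proposition~\ref{prop:U-turn}. Let $\gamma_{i,j}$ denote the cylindrical growth diagram of $P$. Since $[L_1],[L_2],[L_3]$ form a $2$-simplex in $\Delta_2$, the three directed distances around the triangle share a common direction, forcing $\gamma_{1,2}=\gamma_{2,3}$ to be a common fundamental weight with $\gamma_{1,3}$ the dual weight. I will treat the representative case $\gamma_{1,2}=\gamma_{2,3}=\omega_1$ and $\gamma_{1,3}=\omega_2$, shown in Figures~\ref{sharp} and~\ref{sharp removed}; the dual case $\gamma_{1,2}=\gamma_{2,3}=\omega_2$, $\gamma_{1,3}=\omega_1$ is handled symmetrically via $*$-duality.

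For Step 1 (the reduced polygon $P\setminus[L_2]$ is generic), I will delete the row $\gamma_{2,\cdot}$ and the cyclic column $\gamma_{\cdot,2}$ from the growth diagram and reindex to produce a candidate growth diagram for $P\setminus[L_2]$. Every local condition inside the retained region is directly inherited; the only new ones are those at the interface where old rows $1$ and $3$ become adjacent. By Proposition~\ref{prop:difs}(a) applied to the row pairs $(1,2)$ and $(2,3)$, the differences $\dif(\gamma_{1,j},\gamma_{2,j})=\{a_j\}$ and $\dif(\gamma_{2,j},\gamma_{3,j})=\{b_j\}$ are monotonic singletons in $j$ with $a_2=b_3=1$ and (using $\dif(\gamma_{1,3},\gamma_{3,3})=\{1,2\}$) $a_3=2$. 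I then need to show $a_j\ne b_j$ for all $j$ and that $\{a_j,b_j\}$ is monotonic in the order $\{1,2\}<\{1,3\}<\{2,3\}$. This matches the pattern of Proposition~\ref{prop:difs}(b) for a row transition of type $\omega_2$, making the candidate a valid growth diagram.

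For Step 2 ($\conv(P)=\conv(P\setminus[L_2])\cup\{[L_2]\}$), the containment $\supseteq$ is immediate from monotonicity of $\minconv$ and $\maxconv$. For the reverse, Proposition~\ref{prop:conv} says any $v\in\conv(P)$ lies on a straight-path geodesic between two vertices of $P$; if neither is $[L_2]$, we are done, so assume $v\in\conv([L_2],[L_j])$ with $d([L_2],[L_j])=s\omega_k$ for some $s\ge1$ and $k\in\{1,2\}$. I will case-analyze on the shape of $\gamma_{2,j}$. If $\gamma_{2,j}=s\omega_1$, the partition constraints force $b_j=1$ and $\gamma_{3,j}=(s-1)\omega_1$; combined with $d([L_2],[L_3])=\omega_1$, the unique straight $\omega_1$-geodesic from $[L_2]$ to $[L_j]$ must pass through $[L_3]$, placing $v\in\conv([L_3],[L_j])\cup\{[L_2]\}$. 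Dually, if $\gamma_{2,j}=s\omega_2$, monotonicity (with $a_3=2$) forces $a_j=3$ and $\gamma_{1,j}\equiv(s-1)\omega_2$, so the unique straight $\omega_2$-geodesic from $[L_2]$ to $[L_j]$ passes through $[L_1]$, giving $v\in\conv([L_1],[L_j])\cup\{[L_2]\}$. The mixed case is immediate since then $\conv([L_2],[L_j])$ is trivial. The essential geometric input here is the uniqueness of the straight-path geodesic at distance $s\omega_k$, whose parallelogram of geodesics collapses to a line segment.

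The main obstacle I anticipate is the verification in Step 1 that $a_j\ne b_j$ and that the combined sequence $\{a_j,b_j\}$ is monotonic; neither is automatic from the individual monotonicity of $a_j$ and $b_j$. I expect this to follow by tracking the local Bender--Knuth move through two successive row transitions, using the adjacency of $[L_1]$ and $[L_3]$ (encoded in $\gamma_{1,3}=\omega_2$) to rule out the conflict $a_j=b_j$ and to propagate the correct pattern along $j$ through the cyclic wrap. Step 2 then becomes a comparatively direct consequence of Step 1 and the uniqueness of straight-path geodesics in the building.
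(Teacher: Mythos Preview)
Your overall approach is the same as the paper's: delete row and column~$2$, verify that rows~$1$ and~$3$ become adjacent rows of a valid growth diagram, then handle $\conv$ via Proposition~\ref{prop:conv} by showing each nontrivial $\conv([L_2],[L_j])$ is absorbed into $\conv([L_1],[L_j])$ or $\conv([L_3],[L_j])$. The paper carries this out by introducing indices $l$ (the first column with $a_j=3$) and $k$ (the first column with $b_j\ne 1$), proving $l\le k$, and then checking the three regions $j<l$, $l\le j<k$, $j\ge k$ separately.

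There is, however, a genuine gap in your Step~1. Matching the pattern of Proposition~\ref{prop:difs}(b) --- that is, showing $a_j\ne b_j$ and that $\{a_j,b_j\}$ is monotone in $\{1,2\}<\{1,3\}<\{2,3\}$ --- is only a \emph{necessary} condition for the reduced array to be a growth diagram, not a sufficient one. You still have to verify the local rule $\gamma_{3,j}=\sort(\gamma_{3,j-1}+\gamma_{1,j}-\gamma_{1,j-1})$ at every new unit square, and this does \emph{not} follow formally from the two original local rules. Concretely, at the transition column $j=l$ (where $a_{j-1}=2$, $a_j=3$) you need to know that $\gamma_{1,l}$ has its second and third parts equal, and at $j=k$ (where $b_{j-1}=1$, $b_j=2$) you need $\gamma_{2,k}$ to have its first two parts equal; only then does the sort produce the correct $\gamma_{3,j}$. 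The paper establishes exactly these facts and checks the local rule at $l$ and $k$ by hand (including the degenerate case $l=k$, which forces $\gamma_{1,l}$ to be a full rectangle). Your plan to ``track the Bender--Knuth move through two successive row transitions'' is the right instinct, but it must be executed at these transition points, not bypassed by appeal to Proposition~\ref{prop:difs}.

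For Step~2 your case split on the shape of $d([L_2],[L_j])$ is equivalent to the paper's split at the index $k$, but note that your claim ``partition constraints force $b_j=1$'' when $d([L_2],[L_j])=s\omega_1$ is not automatic: the partition $\gamma_{2,j}$ could be $(s+c,c,c)$ with $c>0$, and then $b_j\in\{1,3\}$ a priori. You need the fact, proved in the paper as part of Step~1, that $b_j\ne 3$ for $3\le j\le n+1$ (because $a_j\ge 2$ prevents $\gamma_{2,j}$ from being a full rectangle). Once that is in hand your argument is fine.
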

\begin{proof}
As in the previous proof, let $\gamma_{ij}$ be the partitions representing the pairwise distances $d([L_i],[L_j])$ in the growth diagram for $P$. Since there is a sharp corner, suppose that $\gamma_{1,2}=\omega_1$, $\gamma_{2,3}=\omega_1$, and $\gamma_{1,3}=\omega_2$. The other case with $\gamma_{1,2}=\omega_2$, $\gamma_{2,3}=\omega_2$, and $\gamma_{1,3}=\omega_1$ is similar. See Figure \ref{fig:sharp corner} for an example of a growth diagram for this setup. We must show that removing the entries corresponding to distances involving $[L_2]$ yields a valid growth diagram since the distances $d([L_i],[L_j])$ for $i\not=2$ and $j\not=2$ are not affected when $[L_2]$ is removed. Unlike the previous proposition, since the rows and columns corresponding to $[L_1]$ and $[L_3]$ are not identified, we must check that $\gamma_{1,i}$ and $\gamma_{3,i}$ differ in a vertical strip and that the new unit squares satisfy the local condition. 

The differences in the third column across the first three rows are $\dif(\gamma_{1,3},\gamma_{2,3})=\{2\}$ and $\dif(\gamma_{2,3},\gamma_{3,3})=\{1\}$. Let $l$ be the smallest index such that $\dif(\gamma_{1,l},\gamma_{2,l})=\{3\}$. This is the smallest index such $\gamma_{1,l}=(a,b,b)$ for integers $a\geq b\geq 0$. Similarly, define $k$ to be the smallest index such that $\dif(\gamma_{2,k},\gamma_{3,k})\not=\{1\}$. This is the smallest index such that $\gamma_{2,k}=(c,c,d)$ for some integers $c\geq d\geq 0$. Note that $l\leq k$ because $\gamma_{1,j}$ and $\gamma_{2,j}$ differ by a box in the second row for $j<l$, so $\gamma_{2,k}$ cannot possibly be of the form $(c,c,d)$. Note that $\dif(\gamma_{2,k},\gamma_{3,k})=\{2\}$, i.e. the differences $\dif(\gamma_{2,j},\gamma_{3,j})$ across the second and third rows cannot switch directly from $\{1\}$ to $\{3\}$ as the column index $j$ increases. Such a switch would require $\gamma_{2,j}$ to be of the form $(a,a,a)$ for some $j$, which is impossible since $\dif(\gamma_{1,j},\gamma_{2,j})=\{2\}$ or $\{3\}$ for all $j>2$. Hence, the smallest index $j$ for which $\dif(\gamma_{2,j},\gamma_{3,j})=\{3\}$ is $j=n+2$, the index of the column containing the last non-empty entry of the second row. Hence, there are $3$ portions of the diagram to analyze if $l\not=k$ and $2$ otherwise (see Figure \ref{fig:sharp corner}).

In either case, for all $j<l$, $\gamma_{1,j}$ and $\gamma_{3,j}$ differ by a vertical strip of size two, more specifically $\dif(\gamma_{1,i},\gamma_{3,i})=\{1,2\}$. Hence, removing the entries $\gamma_{2,j}$ from the diagram for $j<l$ yields a portion of a new diagram that satisfies the local rule in each new unit square. 

Suppose that $l\not=k$. Then for each $j$ such that $l\leq j<k$, $\dif(\gamma_{1,j},\gamma_{3,j})=\{1,3\}$. Hence, the local condition is satisfied for unit squares $\gamma_{1,j-1},\gamma_{1,j},\gamma_{3,j-1},\gamma_{3,j}$ for all $j$ in the range $l<j<k$. We must also check the local condition for the unit square involving the four entries $\gamma_{1,l-1},\gamma_{1,l},\gamma_{3,l-1},\gamma_{3,l}$. As mentioned above, $\gamma_{1,l}=(a,b,b)$ for integers $a,b$. The integers $a,b$ are not equal because $a=b$ would imply $l=k$. The weight $\gamma_{3,l}$ is $(a-1,b,b-1)$, which we show agrees with the local condition. Subtracting one from the entries $\dif(\gamma_{1,l-1},\gamma_{3,l-1})=\{1,2\}$ of $\gamma_{1,l}$ yields the weight $(a-1,b-1,b)$. According to the local condition, and since $a\not=b$, this is then sorted to get $\gamma_{3,l}=(a-1,b,b-1)$ as desired.

Now consider the unit square involving $\gamma_{1,k-1},\gamma_{1,k},\gamma_{3,k-1},\gamma_{3,k}$, still assuming $l\not=k$. We have that $\dif(\gamma_{1,k-1},\gamma_{3,k-1})=\{1,3\}$ and $\dif(\gamma_{1,k},\gamma_{3,k})=\{2,3\}$. As previously mentioned $\gamma_{2,k}=(c,c,d)$ where $c\not=d$ because $\dif(\gamma_{1,k},\gamma_{2,k})=\{3\}$. Then $\gamma_{1,k}=(c,c,d+1)$ and $\gamma_{3,k}=(c,c-1,d)$. According to the local condition, subtracting $1$ in entries $1$ and $3$ yields the weight $(c-1,c,d)$, which sorts to $(c,c-1,d)$ as desired.

Now suppose that $l=k$. This is only possible if $\gamma_{1,l}=(a,a,a)$ for some integer $a$. This forces $\gamma_{1,l-1}$ to be $(a,a,a-1)$ ($\gamma_{1,l-1}=(a,a-1,a-1)$ would imply $l<k$), $\gamma_{3,l-1}=(a-1,a-1,a-1)$, and $\gamma_{3,l}=(a,a-1,a-1)$, which satisfy the local rule with $\dif(\gamma_{1,l-1},\gamma_{3,l-1})=\{1,2\}$ and $\dif(\gamma_{1,l},\gamma_{3,l})=\{2,3\}$.  

For $j\geq k$ in either case of $l\not=k$ and $l=k$, $\dif(\gamma_{1,i},\gamma_{3,i})=\{2,3\}$ and the local condition is satisfied for all unit squares involving $\gamma_{1,j}$ and $\gamma_{3,j}$ for $j\geq k$.

To see that $\conv(P)=\conv(P\setminus [L_2])\cup \{L_2\}$, consider first $j<k$. Since $\dif(\gamma_{2,j},\gamma_{3,j})=\{1\}$ and the first two parts of $\gamma_{2,j}$ cannot be equal for all $j<k$, $\lvert \gamma_{2,j}\rvert=\lvert \gamma_{3,j}\rvert+1$ for all $j<k$. Hence, for any non-trivial $\conv([L_2],[L_j])$, it holds that $\conv([L_2],[L_j])=\conv([L_3],[L_j])\cup \{[L_2]\}$. Likewise, for $j\geq k$, $\dif(\gamma_{1,j},\gamma_{2,j})=\{3\}$, so $\lvert \gamma_{2,j}\rvert=\lvert \gamma_{1,j}\rvert+1$ and $\conv([L_2],[L_j])=\conv([L_1],[L_j])\cup \{[L_2]\}$ for any non-trivial $\conv([L_2],[L_j])$.
\end{proof}

The previous two propositions allow for the elimination of U-turn configurations and sharp-corner configurations from polygons. We will say that three consecutive vertices  $[L_1],[L_2],[L_3]$ in a path or polygon form an \emph{elbow} if they are all distinct and $d([L_1],[L_2])\not=d([L_2],[L_3])$. The following lemma is an exercise. 

\begin{Lem}\label{lem:double elbow}
Let $P$ be a polygon in $\Delta_2$ that does not have any U-turn configurations nor any sharp-corner configurations. Then there is a sequence of consecutive vertices $[L_1],\ldots,[L_a]$ of $P$ such that $[L_1],[L_2],[L_3]$ and $[L_{a-2}],[L_{a-1}],[L_a]$ are both elbow configurations, and $a$ is the smallest index such that $d([L_{j-1}],[L_{j}])=d([L_j],[L_{j+1}])$ for all $3\leq j\leq a-2$ and $\lvert d([L_1],[L_a]) \rvert=\lvert d([L_1],[L_{a-1}]) \rvert$. The sequence of vertices $[L_1],\ldots,[L_a]$ can be put in a common apartment.
\end{Lem}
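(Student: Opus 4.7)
My plan rests on two structural consequences of the hypotheses. First, the no-sharp-corner condition implies that two consecutive same-type edges at a vertex must extend a straight line in any apartment containing them: two $\omega_1$-edges in distinct apartment-directions $\varepsilon_i, \varepsilon_j$ would place the outer endpoints at displacement $\varepsilon_i + \varepsilon_j = -\varepsilon_k$ (an $\omega_2$-direction in $SL_3$), giving a sharp corner. Hence every maximal monochromatic straight run in $P$ is a combinatorial geodesic lying in a single apartment.

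Since $P$ is a closed polygon, it cannot consist of a single monochromatic run (such a run would fail to close), so $P$ contains at least one elbow. Choose one and label it $[L_1],[L_2],[L_3]$, say with $d([L_1],[L_2])=\omega_2$ and $d([L_2],[L_3])=\omega_1$. By the building axiom, the two edges at $[L_2]$ lie in a common apartment $A$; iteratively extending along the straight $\omega_1$-run (each next vertex being the forced straight extension by no-sharp-corner), all of $[L_1],[L_2],\ldots,[L_{m-1}]$ fit in $A$, where $[L_{m-1}]$ denotes the last vertex before the next elbow. Setting $[L_1]$ at the origin of $A$, $[L_2]$ at $-\varepsilon_{i_1}$, and $[L_k]$ at $-\varepsilon_{i_1}+(k-2)\varepsilon_{i_2}$ with $i_2\neq i_1$ (no U-turn at $[L_2]$), a direct computation gives $|d([L_1],[L_k])|=k-2$. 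At the elbow $[L_{m-1}]\to[L_m]$ the $\omega_2$-step moves in direction $-\varepsilon_j$ with $j\neq i_2$, and a case analysis in a common apartment of $[L_1],[L_{m-1}],[L_m]$ shows $|d([L_1],[L_m])|=m-1$ if $j=i_1$ or $|d([L_1],[L_m])|=m-2$ if $j$ equals the third index; only the latter case satisfies the length condition.

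For existence of the smallest $a$, I would track $f(k)=|d([L_1],[L_k])|$ around $P$. Each step changes $f$ by $\pm 1$ or $0$, and $f(1)=f(n+1)=0$ by closure. Within any apartment containing both $[L_1]$ and a contiguous stretch of $P$, the case analysis above shows that straight-run steps increment $f$ by $+1$ while elbow steps change $f$ by $0$ (length-preserving case) or $+1$ (length-increasing case). Hence for $P$ to close with $\sum\Delta f=0$, at least one length-preserving elbow must occur; take $a$ to be the smallest index realizing this along with the no-intermediate-elbow property.

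To place $[L_1],\ldots,[L_a]$ in a common apartment, I would invoke the standard building-theoretic fact that for any two apartments $A, B$ sharing two vertices $v, w$, the intersection $A\cap B$ is convex and contains the parallelogram of all combinatorial geodesics between $v$ and $w$. Let $B$ be any apartment containing the vertex $[L_1]$ and the edge $[L_{a-1}][L_a]$ (guaranteed by the two-simplex axiom). Then the geodesic $[L_1],[L_2],\ldots,[L_{a-1}]$ in $A$ lies in $A\cap B\subseteq B$, so $B$ contains the entire sequence $[L_1],\ldots,[L_a]$. The main obstacle is the existence step, where one must justify the sign analysis of $\Delta f$ robustly—including when $P$ transitions between apartments—to conclude that a length-preserving elbow must exist; once such an $a$ is identified, the apartment claim is comparatively direct.
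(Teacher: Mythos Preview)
The paper leaves this lemma as an exercise and provides no proof, so there is nothing to compare against directly. Your overall strategy is sound and matches what the exercise presumably has in mind: locate an elbow, run along the subsequent straight segment, and analyse the next elbow in a common apartment. Your final paragraph (using the building axiom to place $[L_1]$ and the edge $([L_{a-1}],[L_a])$ in a common apartment, and then noting that the path $[L_1],\ldots,[L_{a-1}]$ is itself a combinatorial geodesic of length $a-2=|d([L_1],[L_{a-1}])|$, hence lies in every apartment containing its endpoints) is correct and is exactly the right argument for the apartment claim.

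The existence step, however, has a genuine gap that you yourself flag. Your claim that ``straight-run steps increment $f$ by $+1$ while elbow steps change $f$ by $0$ or $+1$'' is only justified \emph{locally}, in an apartment containing $[L_1]$ together with the current portion of the path; you have not shown that this picture persists once the polygon might leave that apartment. The way to close the gap is to prove the following auxiliary statement by induction on the number of elbows traversed: \emph{as long as every elbow encountered so far has been length-increasing (i.e.\ $f$ has gone up by $+1$ at every step), the path $[L_1],\ldots,[L_k]$ is a combinatorial geodesic}. This holds because the path length equals $f(k)=k-1$, and a geodesic lies in any apartment containing its endpoints; thus you may always choose an apartment containing $[L_1]$ together with the next edge and repeat your local computation there. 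Once this is established, $f$ is forced to strictly increase until the first length-preserving elbow, and since $f(n+1)=f(1)=0$ for a closed polygon, such an elbow must occur. A second point you should check explicitly is that ``length-preserving measured from $[L_1]$'' coincides with ``length-preserving measured from the vertex before the \emph{preceding} elbow,'' which is what the lemma actually demands; this follows from the same apartment computation once the geodesic property is in hand, since both reference points lie on the geodesic and see the same coordinate picture.
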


For example, if $d([L_1],[L_2])=\omega_1$ and $d([L_2],[L_3])=\omega_2$, then $d([L_1],[L_j])=\omega_1+(j-2)\omega_2$ for $2\leq j<a$ and $d([L_1],[L_a])=(a-2)\omega_2$. See Figure \ref{fig:double elbow}. We will call the path $[L_1],\ldots,[L_a]$ a \emph{double-elbow configuration}.

\begin{figure}[b]
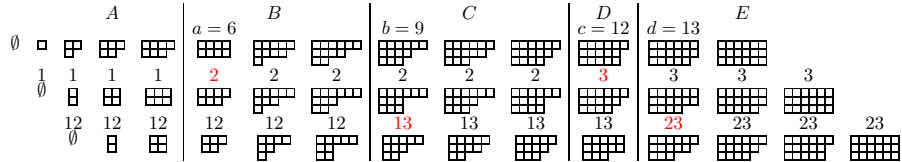

\centerline{
  \begin{minipage}{\linewidth}
\begin{align*}
\scalemath{.65}{
\ytableausetup{boxsize=1.5mm}
\begin{array}{ccccc|ccc|ccc|c|ccccc}
&&&A&&&B&&&C&&D&&E&\\[-1.5mm]
&&&&&a=6&&&b=9&&&c=12&d=13&&\\[-1.5mm]
\emptyset&\ydiagram{1}&\ydiagram{2,1}&\ydiagram{3,2}&\ydiagram{4,3}&\ydiagram{4,4}&\ydiagram{5,4,1}&\ydiagram{6,4,2}&\ydiagram{6,4,3}&\ydiagram{6,5,3}&\ydiagram{6,5,4}&\ydiagram{6,5,5}&\ydiagram{6,6,5}&\ydiagram{6,6,6}\\
&1&1&1&1&\color{red} 2&2&2&2&2&2&\color{red}3&3&3&3\\[-1.5mm]
&\emptyset&\ydiagram{1,1}&\ydiagram{2,2}&\ydiagram{3,3}&\ydiagram{4,3}&\ydiagram{5,3,1}&\ydiagram{6,3,2}&\ydiagram{6,3,3}&\ydiagram{6,4,3}&\ydiagram{6,4,4}&\ydiagram{6,5,4}&\ydiagram{6,6,4}&\ydiagram{6,6,5}&\ydiagram{6,6,6}\\
&&12&12&12&12&12&12&\color{red}13&13&13&13&\color{red}23&23&23&23\\[-1.5mm]
&&\emptyset&\ydiagram{1,1}&\ydiagram{2,2}&\ydiagram{3,2}&\ydiagram{4,2,1}&\ydiagram{5,2,2}&\ydiagram{5,3,2}&\ydiagram{5,4,2}&\ydiagram{5,4,3}&\ydiagram{5,5,3}&\ydiagram{6,5,3}&\ydiagram{6,5,4}&\ydiagram{6,5,5}&\ydiagram{6,6,6}\\
\end{array}}
\end{align*}
\end{minipage}}
\caption{First three rows of a growth diagram containing a double-elbow configuration. The column indices $a,b,c,d$ from the proof of Proposition \ref{prop:elbow move} are shown for this example. \label{fig:elbow configuration}}
\label{elbow}
\end{figure}
\begin{figure}
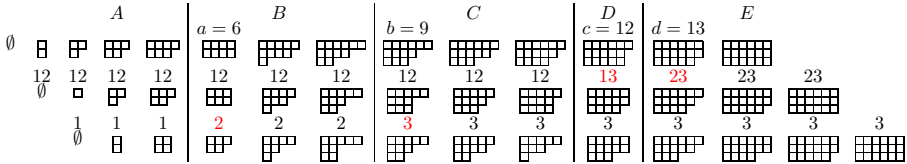

\centerline{
  \begin{minipage}{\linewidth}
\begin{align*}
\scalemath{.65}{
\ytableausetup{boxsize=1.5mm}
\begin{array}{ccccc|ccc|ccc|c|ccccc}
&&&A&&&B&&&C&&D&&E&\\[-1.5mm]
&&&&&a=6&&&b=9&&&c=12&d=13&&\\[-1.5mm]
\emptyset&\ydiagram{1,1}&\ydiagram{2,1}&\ydiagram{3,2}&\ydiagram{4,3}&\ydiagram{4,4}&\ydiagram{5,4,1}&\ydiagram{6,4,2}&\ydiagram{6,4,3}&\ydiagram{6,5,3}&\ydiagram{6,5,4}&\ydiagram{6,5,5}&\ydiagram{6,6,5}&\ydiagram{6,6,6}\\
&12&12&12&12&12&12&12&12&12&12&\color{red}13&\color{red}23&23&23\\[-1.5mm]
&\emptyset&\ydiagram{1}&\ydiagram{2,1}&\ydiagram{3,2}&\ydiagram{3,3}&\ydiagram{4,3,1}&\ydiagram{5,3,2}&\ydiagram{5,3,3}&\ydiagram{5,4,3}&\ydiagram{5,4,4}&\ydiagram{5,5,4}&\ydiagram{6,5,4}&\ydiagram{6,6,5}&\ydiagram{6,6,6}\\
&&1&1&1&\color{red}2&2&2&\color{red}3&3&3&3&3&3&3&3\\[-1.5mm]
&&\emptyset&\ydiagram{1,1}&\ydiagram{2,2}&\ydiagram{3,2}&\ydiagram{4,2,1}&\ydiagram{5,2,2}&\ydiagram{5,3,2}&\ydiagram{5,4,2}&\ydiagram{5,3,1}&\ydiagram{5,5,3}&\ydiagram{6,5,3}&\ydiagram{6,6,4}&\ydiagram{6,6,5}&\ydiagram{6,6,6}\\
\end{array}}
\end{align*}
\end{minipage}}
\caption{Example from Figure \ref{elbow} with $[L_2]$ replaced by $[L_2']$. \label{elbow move}}
\label{elbow switched}
\end{figure}

\begin{Prop}\label{prop:elbow move}
Let $P$ be a generic polygon in $\Delta_2$. Suppose $P$ contains a double-elbow configuration at $[L_1],\ldots,[L_a]$. Let $[L_2']$ be the unique vertex adjacent to $[L_1],[L_2],[L_3]$. Then the polygon $P'=\left([L_1],[L_2'],[L_3],\ldots,[L_n]\right)$ is generic. Furthermore, $\conv(P)=\conv(P')\cup \{[L_2]\}$.
\end{Prop}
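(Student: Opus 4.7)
The plan is to verify two assertions: (A) the new polygon $P'$ is generic, and (B) $\conv(P)=\conv(P')\cup\{[L_2]\}$. By the three-adjacent-vertices fact stated at the end of Section~\ref{sec:building} (for concreteness take $d([L_1],[L_2])=\omega_1$ and $d([L_2],[L_3])=\omega_2$; the other case is symmetric), the vertex $[L_2']$ is uniquely determined, and one computes $d([L_1],[L_2'])=\omega_2$ and $d([L_2'],[L_3])=\omega_1$. Thus the boundary type of $P'$ is $\vec\lambda$ with its first two entries swapped.

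For (A), the candidate growth diagram for $P'$ differs from that of $P$ only in the row and column indexed by position $2$ (where $[L_2]$ is replaced by $[L_2']$). I would first compute, using the local growth-diagram rule applied to the unchanged row~$1$ together with the new diagonal entry $\gamma'_{2,3}=\omega_1$, the predicted values $\gamma'_{2,j}$ for $j\geq 3$. Then I would verify $d([L_2'],[L_j])$ matches this prediction by case analysis. For $j\leq a$ both $[L_2]$ and $[L_2']$ lie in the common apartment from Lemma~\ref{lem:double elbow}, so the distances can be read off directly and matched to the local rule by the same apartment calculation used in Proposition~\ref{prop:difs}. For $j>a$ I would place $[L_j]$ together with $[L_1],[L_2],[L_2'],[L_3]$ in a common apartment (possible since these four vertices all pairwise lie close to $[L_1]$) and use adjacency of $[L_2']$ to the other three to pin down $d([L_2'],[L_j])$ from $d([L_1],[L_j])$, $d([L_2],[L_j])$, and $d([L_3],[L_j])$. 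The main obstacle is this case $j>a$: one must translate ``$[L_2]$ is swapped with $[L_2']$ across an elementary triangle'' into a definite relation on distances that matches the growth-diagram prediction coming from Proposition~\ref{prop:difs}.

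For (B), I would use Proposition~\ref{prop:conv} to express each of $\conv(P),\conv(P')$ as a union of straight-path geodesics between pairs. The containment $[L_2]\in\conv(P)$ is trivial. The crucial geometric observation is that, by Lemma~\ref{lem:double elbow}, $d([L_1],[L_a])=(a-2)\omega_2$ is a multiple of a single fundamental weight, so $\conv([L_1],[L_a])$ is a straight-path geodesic inside the common apartment, and both $[L_2']$ and $[L_3],\ldots,[L_{a-1}]$ lie on this geodesic; in particular $[L_2']\in\conv(P)$, and symmetrically $[L_2]\in\conv(P')$ would require a nontrivial straight-path geodesic of $P'$ to pass through $[L_2]$, which one rules out by inspecting the possible pairs.

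For the equality of vertex sets, I would then show the two inclusions by pair-by-pair analysis. For $\conv(P)\setminus\{[L_2]\}\subseteq\conv(P')$, any vertex on a straight-path geodesic $\conv([L_i],[L_j])$ with $i,j\neq 2$ is already in $\conv(P')$; the remaining vertices come from geodesics $\conv([L_2],[L_j])$, and for each such vertex $v\neq[L_2]$ one places $[L_2],[L_2'],[L_j]$ in a common apartment and verifies that $v$ lies on one of $\conv([L_1],[L_j])$, $\conv([L_3],[L_j])$, or $\conv([L_2'],[L_j])$. The reverse inclusion $\conv(P')\subseteq\conv(P)$ is symmetric, with the role of $[L_2]$ played by $[L_2']$; again the common apartment of the double-elbow carries the calculation, using that the step $[L_1]$--$[L_2']$ has the opposite weight to $[L_1]$--$[L_2]$.
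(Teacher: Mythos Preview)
Your overall architecture matches the paper's: check genericity by verifying the local growth-diagram rule on the altered row, and check the $\conv$ identity pair-by-pair via Proposition~\ref{prop:conv}. But there is a genuine gap in the step you flag as ``the main obstacle.''

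You write that for $j>a$ you would ``place $[L_j]$ together with $[L_1],[L_2],[L_2'],[L_3]$ in a common apartment (possible since these four vertices all pairwise lie close to $[L_1]$).'' This is not generally possible, and closeness of the four to each other is no reason it should be. The building axioms guarantee a common apartment for any two simplices, not for a $2$-simplex rhombus together with a distant vertex. The paper explicitly confronts this: in the middle region (where $\dif(\gamma_{1,j},\gamma_{2,j})=\{2\}$ and $\dif(\gamma_{2,j},\gamma_{3,j})=\{1,3\}$, so $d([L_1],[L_j])=d([L_3],[L_j])$) it says outright that one may not be able to put $[L_j]$ in a single apartment with all of $[L_1],[L_2],[L_3]$. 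The fix is a two-apartment argument. Pick one apartment containing the edge $([L_2],[L_3])$ and $[L_j]$; in the parallelogram of geodesics from $[L_3]$ to $[L_j]$ there is a vertex $K$ one $\omega_2$-step from $[L_3]$ toward $[L_j]$, and the distance relations force $K$ adjacent to $[L_2]$. Repeat with an apartment containing $([L_1],[L_2])$ and $[L_j]$ to get a vertex $K'$ with the analogous property. Both $K$ and $K'$ lie on the parallelogram from $[L_2]$ to $[L_j]$, one $\omega_1$-step from $[L_2]$, hence $K=K'$; by the uniqueness of the vertex adjacent to $[L_1],[L_2],[L_3]$ this common vertex is $[L_2']$. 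Only then can one read off $d([L_2'],[L_j])$ and match it against the growth-diagram prediction.

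Relatedly, your dichotomy $j\le a$ versus $j>a$ is too coarse. The paper subdivides $\{3,\ldots,n\}$ into five ranges $A,B,C,D,E$ according to where $\dif(\gamma_{1,j},\gamma_{2,j})$ and $\dif(\gamma_{2,j},\gamma_{3,j})$ transition (from $\{1\}$/$\{2\}$/$\{3\}$ and $\{1,2\}$/$\{1,3\}$/$\{2,3\}$ respectively), proving $a<b<c\le d$ and handling the boundary unit squares at $j=a,b,c,d$ individually. The same region analysis drives the $\conv$ comparison: in each region one determines exactly when $\conv([L_2],[L_j])$ or $\conv([L_2'],[L_j])$ is a nontrivial straight-path geodesic, and shows it is absorbed by $\conv([L_1],[L_j])$ or $\conv([L_3],[L_j])$ (up to the extra vertex). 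Your pair-by-pair plan is right in spirit, but the apartment you invoke for the $\conv$ argument (``place $[L_2],[L_2'],[L_j]$ in a common apartment and verify that $v$ lies on one of $\conv([L_1],[L_j])$, $\conv([L_3],[L_j])$, \ldots'') again silently assumes you know where $[L_1]$ and $[L_3]$ sit in that apartment, which is exactly the unavailable information in region $C$.
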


\begin{proof}
Let $\gamma_{ij}$ be the partitions representing the pairwise distances $d([L_i],[L_j])$ in the growth diagram for $P$. Since there is an elbow configuration starting at $[L_1]$, suppose that $\gamma_{1,2}=\omega_1$, $\gamma_{2,3}=\omega_2$, and $\gamma_{1,3}=\omega_1+\omega_2$. See Figure \ref{fig:elbow configuration} for an example of the first three rows of a growth diagram with this setup. The other case is similar. 

Let $[L_2']$ be the unique vertex adjacent to $[L_1],[L_2]$ and $[L_3]$. We must show that replacing the partitions $\gamma_{2,j}$ and $\gamma_{i,2}$ with partitions representing $d([L_2'],[L_j])$ and $d([L_i],[L_2'])$ yields a valid growth diagram. Note that the only entry that changes in the first row is $\gamma_{1,2}=d([L_1],[L_2])=\omega_1$, which becomes $d([L_1],[L_2'])=\omega_2$, and this new first row determines the entire new growth diagram. We will show that partitions for $d([L_2'],[L_j])$ are precisely the entries of the second row of this new diagram. In what follows, we interpret $\omega_1, \omega_2$ as both weights and the partitions $(1,0,0), (1,1,0)$ respectively. It should be clear from context which is meant.

Suppose that the double-elbow configuration consists of $a$ vertices $[L_1],[L_2],[L_3],\ldots,[L_a]$, so that 
\begin{align*}
\begin{array}{lccl}
\gamma_{1,j}=\omega_1+(j-2)\omega_2& \text{ for } 2\leq j<a, &\;&\gamma_{1,a}=(a-2)\omega_2\\
\gamma_{2,j}=(j-2)\omega_2& \text{ for } 3\leq j<a, &\;&\gamma_{2,a}=\omega_1+(a-3)\omega_2\\
\gamma_{3, j}=(j-3)\omega_2& \text{ for } 4\leq j<a, &\;&\gamma_{3,a}=\omega_1+(a-4)\omega_2
\end{array}
\end{align*}
See Figure \ref{fig:elbow configuration} for an example of the first three rows of a growth diagram with $a=6$. 

For $j$ such that $3\leq j<a$ the distances involving $[L_2']$ are $d(L_2',L_j)=\omega_1+(j-3)\omega_2$. Replace $\gamma_{2,j}$ with $\gamma_{2,j}'=\omega_1+(j-3)\omega_2$ and $\gamma_{1,2}$ with $\omega_2$. Then $\dif(\gamma_{1,j},\gamma_{2,j}')=\{1,2\}$ for all $3\leq j<a$, so the local condition is satisfied in each unit square across the first two rows of the new diagram. The local condition across the second and third rows is also satisfied for all unit squares involving indices $3\leq j<a$ because $\dif(\gamma_{2,j}',\gamma_{3,j})=\{1\}$ for all such $j$. See Figure \ref{elbow move} for the first three rows of the diagram corresponding to the diagram from Figure~\ref{fig:elbow configuration} with $[L_2]$ replaced by $[L_2']$. 

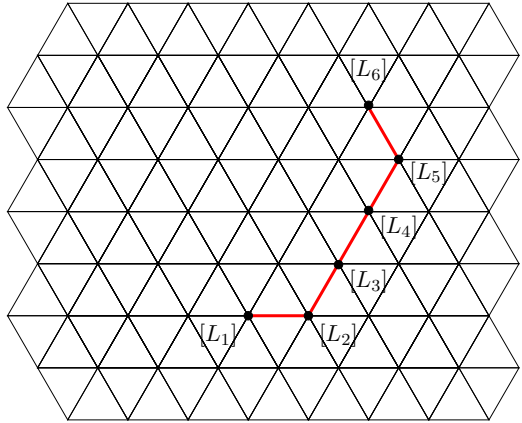
\begin{figure}
\center
\scalemath{.8}{
\begin{tikzpicture}
\foreach \j in {-1,...,5}{
\foreach \i in {-3,...,3}{
	\ifodd\j{
	\draw (\i+1/2,0.866*\j) -- ++(0:1) -- ++(120:1) -- ++(-120:1);
	\draw (\i+1/2,0.866*\j) -- ++(60:1) -- ++(180:1) -- ++(-60:1);
	\draw (\i+1/2,0.866*\j) -- ++(120:1) -- ++(-120:1) -- ++(0:1);
	\draw (\i+1/2,0.866*\j) -- ++(180:1) -- ++(-60:1) -- ++(60:1);
	\draw (\i+1/2,0.866*\j) -- ++(-120:1) -- ++(0:1) -- ++(120:1);
	\draw (\i+1/2,0.866*\j) -- ++(-60:1) -- ++(60:1) -- ++(180:1);
	}
	\else{
	\draw (\i,0.866*\j) -- ++(0:1) -- ++(120:1) -- ++(-120:1);
	\draw (\i,0.866*\j) -- ++(60:1) -- ++(180:1) -- ++(-60:1);
	\draw (\i,0.866*\j) -- ++(120:1) -- ++(-120:1) -- ++(0:1);
	\draw (\i,0.866*\j) -- ++(180:1) -- ++(-60:1) -- ++(60:1);
	\draw (\i,0.866*\j) -- ++(-120:1) -- ++(0:1) -- ++(120:1);
	\draw (\i,0.866*\j) -- ++(-60:1) -- ++(60:1) -- ++(180:1);
	}\fi
}
}
\draw [line width=0.5mm, red] (0,0) -- (1,0) -- (2.5,3*.866) -- (2,4*.866);
\filldraw (0:0) circle(2pt);
\filldraw (1,0) circle(2pt);
\filldraw (1.5,.85) circle(2pt);
\filldraw (2,1.75) circle(2pt);
\filldraw (2.5,2.6) circle(2pt);
\filldraw (2,3.5) circle(2pt);
\node[below] at (-.5,0) {$[L_1]$}; 
\node[below] at (1.5,0) {$[L_2]$};
\node[below] at (2,.9) {$[L_3]$}; 
\node[below] at (2.5,1.8) {$[L_4]$}; 
\node[below] at (3,2.7) {$[L_5]$}; 
\node[above] at (2,3.75) {$[L_6]$};
\end{tikzpicture}}
\caption{A double-elbow configuration. \label{fig:double elbow}}
\end{figure}

Divide $P$ into subpaths A, B, C, D, E, where part A consists of the initial subset $\{[L_1],\ldots,[L_{a-1}]\}$ of $P$. Note that $a$ is the smallest index such that $\dif(\gamma_{1,a},\gamma_{2,a})=\{2\}$. Let $b$ be the smallest index such that $\dif(\gamma_{2,b},\gamma_{3,b})\not=\{1,2\}$. Let $c$ be the smallest index such that $\dif(\gamma_{1,c},\gamma_{2,c})=\{3\}$, and $d$ the smallest index such that $\dif(\gamma_{2,d},\gamma_{3,d})=\{2,3\}$. We claim that $a<b<c\leq d$ and set
\begin{align*}
A&=\{[L_1],\ldots,[L_{a-1}]\}\\
B&=\{[L_a],\ldots,[L_{b-1}]\}\\
C&=\{[L_b],\ldots,[L_{c-1}]\}\\
D&=\{[L_c],\ldots,[L_{d-1}]\}\\
E&=\{[L_d],\ldots,[L_{n}]\}.
\end{align*}

Since $\gamma_{1,a}=(a-2)\omega_2$, $\gamma_{2,a}=\omega_1+(a-3)\omega_2$, and $\gamma_{3,a}=\omega_1+(a-4)\omega_2$, it follows that $\dif(\gamma_{1,a},\gamma_{2,a})=\{2\}$ and $\dif(\gamma_{2,a},\gamma_{3,a})=\{1,2\}$, so $a$ is strictly smaller than $b$. To see that $b$ is strictly less than $c$ note that $c$ is the smallest index such that $\gamma_{1,c}=(x,y,y)$ for integers $x,y$ with $x\geq y\geq 0$. This implies that $\gamma_{1,(c-1)}=(z,y,y-1)$ for $z=x$ or $z=x-1$ (the latter is only possible if $x>y$). Since $\dif(\gamma_{1,c-1},\gamma_{2,c-1})=\{2\}$, we have $\gamma_{2,(c-1)}=(z,y-1,y-1)$. This implies that $\dif(\gamma_{2,(c-1)},\gamma_{3,(c-1)})$ cannot be $\{1,2\}$, so $b<c$. Now consider $j$ such that $a\leq j<c$, i.e. the indices $j$ such that $\dif(\gamma_{1,j},\gamma_{2,j})=\{2\}$. Since $d$ is the smallest index such that $\dif(\gamma_{2,j},\gamma_{3,j})=\{2,3\}$, this requires $\gamma_{2,d}=(x,x,y)$ for some integers $x\geq y\geq 0$. But this is impossible for any $j$ in the range $a\leq j<c$ because $\dif(\gamma_{1,j},\gamma_{2,j})=\{2\}$, so $c\leq d$. Note it is possible that $c=d$. \\ 

For each region we will show two things: 1) the local condition holds and 2) $\conv(P)$ changes as expected, i.e. $\conv([L_2'],[L_j])\subset \conv(P)$ and $\conv([L_2],[L_j])\subset \conv(P')\cup\{[L_2]\}$. Note that both hold for subpath A.\\ 

\noindent\underline{\bf Subpath B.} 

\noindent{\bf Local Condition:}
Consider region B consisting of the vertices $[L_j]$ such that $a\leq j<b$ for which $\dif(\gamma_{1,j},\gamma_{2,j})=\{2\}$ and $\dif(\gamma_{2,j},\gamma_{3,j})=\{1,2\}$. The difference $\dif(\gamma_{2,j},\gamma_{3,j})=\{1,2\}$ implies $\gamma_{2,j}=\gamma_{3,j}+\omega_2$, and similarly $\dif(\gamma_{1,j},\gamma_{2,j})=\{2\}$ implies that $\gamma_{2,j}=\gamma_{1,j}-\omega_2+\omega_1$. In particular $\lvert \gamma_{2,j}\rvert=\lvert \gamma_{3,j}\rvert+1$ and $\lvert \gamma_{2,j}\rvert=\lvert \gamma_{1,j}\rvert$. 

Place the edge $([L_1],[L_2])$ and $[L_j]$ in a common apartment $A$ containing the parallelogram of combinatorial geodesics from $[L_2]$ to $[L_j]$ (see Figure \ref{fig:region B}). Since $[L_3]$ is strictly closer to $[L_j]$ than $[L_2]$ it lies on this parallelogram (one $\omega_2$ step from $[L_2]$). Since any apartment containing $[L_1]$ and $[L_3]$ must contain all of their combinatorial geodesics, $A$ contains $[L_2']$, which has to be the other vertex adjacent to $[L_2]$ in the parallelogram of geodesics from $[L_2]$ to $[L_j]$. This implies that $d(L_2',L_j)=\gamma_{2,j}-\omega_1=\gamma_{1,j}-\omega_2=\gamma_{3,j}+\omega_2-\omega_1$. Let $\gamma_{2,j}'$ be the corresponding partitions. This means that $\dif(\gamma_{1,j},\gamma_{2,j}')=\{1,2\}$ and $\dif(\gamma_{2,j}',\gamma_{3,j})=\{2\}$ for all $a\leq j<b$. Hence, the local condition is satisfied for each unit square $\gamma_{1,j-1}, \gamma_{1,j}, \gamma_{2,j-1}, \gamma_{2,j}$ for $a< j<b$, and for each unit square across the second row, $\gamma_{2,j-1}, \gamma_{2,j}, \gamma_{3,j-1}, \gamma_{3,j}$ for $a< j<b$. 

Now consider the unit squares across columns $a-1,a$. Since $\gamma_{1,a}=(a-2)\omega_2$, $\gamma_{1,a-1}=\omega_1+(a-3)\omega_2$, $d([L_2'],[L_a])=(a-3)\omega_2$, $d([L_2'],[L_{a-1}])=\omega_1+(a-2)\omega_2$ the local condition is satisfied. For the square across the next two rows, $\gamma_{3,a}=\omega_1+(a-4)\omega_2$, $\gamma_{3,a-1}=(a-4)\omega_2$, so the local condition is satisfied. \\ 

\noindent{\bf Effects on conv:}
Note that for all $a\leq j<b$, the distance $d([L_2],[L_j])$ is never of the form $k\omega_1$ or $k\omega_2$ for any $k$, so $\conv([L_2],[L_j])=\{[L_2],[L_j]\}$. On the other hand, $d([L_2'],[L_j])$ is a multiple of $\omega_2$ only when $i=a$, in which case the straight-path geodesic from $[L_1]$ to $[L_a]$ contains the straight-path geodesic from $[L_2']$ to $[L_a]$. Hence, $\conv([L_1],[L_a])=\conv([L_2'],[L_a])\cup \{[L_1]\}$.\\ 

\begin{figure}
\center
\scalemath{.8}{
\begin{tikzpicture}
\foreach \j in {0,...,6}{
\foreach \i in {-3,...,3}{
	\ifodd\j{
	\draw (\i+1/2,0.866*\j) -- ++(0:1) -- ++(120:1) -- ++(-120:1);
	\draw (\i+1/2,0.866*\j) -- ++(60:1) -- ++(180:1) -- ++(-60:1);
	\draw (\i+1/2,0.866*\j) -- ++(120:1) -- ++(-120:1) -- ++(0:1);
	\draw (\i+1/2,0.866*\j) -- ++(180:1) -- ++(-60:1) -- ++(60:1);
	\draw (\i+1/2,0.866*\j) -- ++(-120:1) -- ++(0:1) -- ++(120:1);
	\draw (\i+1/2,0.866*\j) -- ++(-60:1) -- ++(60:1) -- ++(180:1);
	}
	\else{
	\draw (\i,0.866*\j) -- ++(0:1) -- ++(120:1) -- ++(-120:1);
	\draw (\i,0.866*\j) -- ++(60:1) -- ++(180:1) -- ++(-60:1);
	\draw (\i,0.866*\j) -- ++(120:1) -- ++(-120:1) -- ++(0:1);
	\draw (\i,0.866*\j) -- ++(180:1) -- ++(-60:1) -- ++(60:1);
	\draw (\i,0.866*\j) -- ++(-120:1) -- ++(0:1) -- ++(120:1);
	\draw (\i,0.866*\j) -- ++(-60:1) -- ++(60:1) -- ++(180:1);
	}\fi
}
\filldraw (0,0) circle(2pt) node[below]{$[L_1]$};
\filldraw (1,0) circle(2pt) node[below]{$[L_2]$} ;
\filldraw (1/2,.866) circle(2pt) node[left]{$[L_2']$};
\filldraw (3/2,.866) circle(2pt) node[right]{$[L_3]$};
\filldraw (0,6*.866) circle(2pt) node[above]{$[L_j]$};
\draw [line width=0.5mm, red] (1/2,.866) -- (0,0) -- (1,0); 
\draw [line width=0.5mm, blue] (1,0) -- (2,2*.866) -- (0,6*.866) -- (-1,4*.866) -- (1,0); 
}
\end{tikzpicture}}
\caption{The general configuration for $[L_j]$ in subpath B in the proof of Proposition \ref{prop:elbow move}. \label{fig:region B}}
\end{figure}
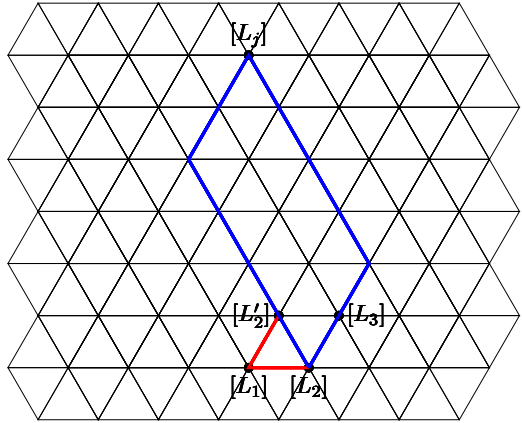

\noindent\underline{\bf Subpath C.}

\noindent{\bf Local Condition:}
Consider region $C$ consisting of the vertices $[L_j]$ such that $b\leq j<c$, in which case $\dif(\gamma_{1,j},\gamma_{2,j})=\{2\}$ and $\dif(\gamma_{2,j},\gamma_{3,j})=\{1,3\}$. In particular, $\gamma_{1,j}=\gamma_{3,j}+(1,1,1)$. These differences imply that $d([L_1],[L_j])=d([L_3],[L_j])$ and 
\begin{align*}
d([L_2],[L_j])&=d([L_1],[L_j])-\omega_2+\omega_1\\
&=d([L_3],[L_j])-\omega_2+\omega_1.
\end{align*}
Let $[L_j]$ be any vertex for $b\leq j<c$. In this case, it may not be possible to place $[L_j]$ in a common apartment with all three of $[L_1], [L_2], [L_3]$ simultaneously. However, this is possible when $d([L_2],[L_j])=k\omega_1$ for some $k$, as is the case when $i=b$ or $i=c-1$.

Consider first $i=b$, so that $d([L_2],[L_b])=k\omega_1$ for some $k$, and $d([L_1],[L_b])=d([L_3],[L_b])=(k-1)\omega_1+\omega_2$. Let $K$ be the vertex adjacent to $[L_2]$ on the straight-path geodesic from $[L_2]$ to $[L_b]$. Since $\lvert \gamma_{1,b}\rvert=\lvert \gamma_{2,b}\rvert=\lvert \gamma_{3,b}\rvert$, and $[L_1]$ and $[L_3]$ are adjacent to $[L_2]$, we have that $\left([L_1],[L_2],K\right)$ and $\left([L_3],[L_2],K\right)$ are a pair of $2$-simplices in the building. The two triangles can be put in a common apartment, along with the geodesic from $[L_2]$ to $[L_b]$. This implies that $K=[L_2']$ and $d([L_2'],[L_b])=(k-1)\omega_1$. See Figure \ref{fig:region C, index b}. The same setup occurs when $i=c-1$. 
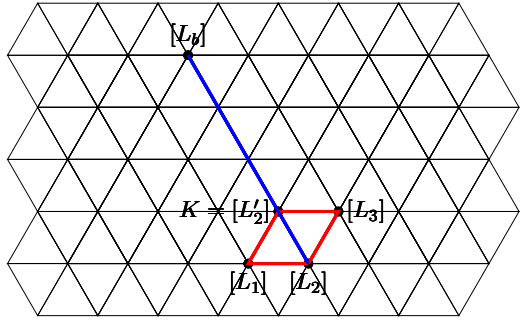
\begin{figure}
\center
\scalemath{.8}{
\begin{tikzpicture}
\foreach \j in {0,...,4}{
\foreach \i in {-3,...,3}{
	\ifodd\j{
	\draw (\i+1/2,0.866*\j) -- ++(0:1) -- ++(120:1) -- ++(-120:1);
	\draw (\i+1/2,0.866*\j) -- ++(60:1) -- ++(180:1) -- ++(-60:1);
	\draw (\i+1/2,0.866*\j) -- ++(120:1) -- ++(-120:1) -- ++(0:1);
	\draw (\i+1/2,0.866*\j) -- ++(180:1) -- ++(-60:1) -- ++(60:1);
	\draw (\i+1/2,0.866*\j) -- ++(-120:1) -- ++(0:1) -- ++(120:1);
	\draw (\i+1/2,0.866*\j) -- ++(-60:1) -- ++(60:1) -- ++(180:1);
	}
	\else{
	\draw (\i,0.866*\j) -- ++(0:1) -- ++(120:1) -- ++(-120:1);
	\draw (\i,0.866*\j) -- ++(60:1) -- ++(180:1) -- ++(-60:1);
	\draw (\i,0.866*\j) -- ++(120:1) -- ++(-120:1) -- ++(0:1);
	\draw (\i,0.866*\j) -- ++(180:1) -- ++(-60:1) -- ++(60:1);
	\draw (\i,0.866*\j) -- ++(-120:1) -- ++(0:1) -- ++(120:1);
	\draw (\i,0.866*\j) -- ++(-60:1) -- ++(60:1) -- ++(180:1);
	}\fi
}
\filldraw (0,0) circle(2pt) node[below]{$[L_1]$};
\filldraw (1,0) circle(2pt) node[below]{$[L_2]$} ;
\filldraw (1/2,.866) circle(2pt) node[left]{$K=[L_2']$};
\filldraw (3/2,.866) circle(2pt) node[right]{$[L_3]$};
\filldraw (-1,4*.866) circle(2pt) node[above]{$[L_b]$};
\draw [line width=0.5mm, red] (1/2,.866) -- (0,0) -- (1,0); 
\draw [line width=0.5mm, red] (1,0) -- (3/2,.866) -- (1/2,.866); 
\draw [line width=0.5mm, blue] (1,0) -- (-1,4*.866); 
}
\end{tikzpicture}}
\caption{The straight-path geodesic from $[L_2]$ to $[L_b]$. \label{fig:region C, index b}}
\end{figure}
\begin{figure}
\scalemath{.68}{
\begin{tikzpicture}
\foreach \j in {0,...,6}{
\foreach \i in {-3,...,3}{
	\ifodd\j{
	\draw (\i+1/2,0.866*\j) -- ++(0:1) -- ++(120:1) -- ++(-120:1);
	\draw (\i+1/2,0.866*\j) -- ++(60:1) -- ++(180:1) -- ++(-60:1);
	\draw (\i+1/2,0.866*\j) -- ++(120:1) -- ++(-120:1) -- ++(0:1);
	\draw (\i+1/2,0.866*\j) -- ++(180:1) -- ++(-60:1) -- ++(60:1);
	\draw (\i+1/2,0.866*\j) -- ++(-120:1) -- ++(0:1) -- ++(120:1);
	\draw (\i+1/2,0.866*\j) -- ++(-60:1) -- ++(60:1) -- ++(180:1);
	}
	\else{
	\draw (\i,0.866*\j) -- ++(0:1) -- ++(120:1) -- ++(-120:1);
	\draw (\i,0.866*\j) -- ++(60:1) -- ++(180:1) -- ++(-60:1);
	\draw (\i,0.866*\j) -- ++(120:1) -- ++(-120:1) -- ++(0:1);
	\draw (\i,0.866*\j) -- ++(180:1) -- ++(-60:1) -- ++(60:1);
	\draw (\i,0.866*\j) -- ++(-120:1) -- ++(0:1) -- ++(120:1);
	\draw (\i,0.866*\j) -- ++(-60:1) -- ++(60:1) -- ++(180:1);
	}\fi
}
\filldraw[gray] (0,0) circle(2pt) node[below]{$[L_1]$};
\filldraw (1,0) circle(2pt) node[below]{$[L_2]$} ;
\filldraw (1/2,.866) circle(2pt) node[below]{$K$};
\filldraw (3/2,.866) circle(2pt) node[right]{$[L_3]$};
\filldraw (-7/2,0.866*5) circle(2pt) node[left]{$[L_j]$};
\draw [line width=0.5mm, red] (1/2,.866) -- (1,0); 
\draw [line width=0.5mm, red] (1,0) -- (3/2,.866) -- (1/2,.866); 
\draw [line width=0.5mm, blue] (3/2,0.866) -- (-3/2,0.866) -- (-7/2,0.866*5) -- (-1/2,0.866*5) -- (3/2,0.866); 
}
\end{tikzpicture}}
\scalemath{.68}{
\begin{tikzpicture}
\foreach \j in {0,...,6}{
\foreach \i in {-3,...,3}{
	\ifodd\j{
	\draw (\i+1/2,0.866*\j) -- ++(0:1) -- ++(120:1) -- ++(-120:1);
	\draw (\i+1/2,0.866*\j) -- ++(60:1) -- ++(180:1) -- ++(-60:1);
	\draw (\i+1/2,0.866*\j) -- ++(120:1) -- ++(-120:1) -- ++(0:1);
	\draw (\i+1/2,0.866*\j) -- ++(180:1) -- ++(-60:1) -- ++(60:1);
	\draw (\i+1/2,0.866*\j) -- ++(-120:1) -- ++(0:1) -- ++(120:1);
	\draw (\i+1/2,0.866*\j) -- ++(-60:1) -- ++(60:1) -- ++(180:1);
	}
	\else{
	\draw (\i,0.866*\j) -- ++(0:1) -- ++(120:1) -- ++(-120:1);
	\draw (\i,0.866*\j) -- ++(60:1) -- ++(180:1) -- ++(-60:1);
	\draw (\i,0.866*\j) -- ++(120:1) -- ++(-120:1) -- ++(0:1);
	\draw (\i,0.866*\j) -- ++(180:1) -- ++(-60:1) -- ++(60:1);
	\draw (\i,0.866*\j) -- ++(-120:1) -- ++(0:1) -- ++(120:1);
	\draw (\i,0.866*\j) -- ++(-60:1) -- ++(60:1) -- ++(180:1);
	}\fi
}
\filldraw (0,0) circle(2pt) node[below]{$[L_1]$};
\filldraw (1,0) circle(2pt) node[below]{$[L_2]$} ;
\filldraw (1/2,.866) circle(2pt) node[right]{$K'$};
\filldraw[gray] (3/2,.866) circle(2pt) node[right]{$[L_3]$};
\filldraw (-1/2,7*.866) circle(2pt) node[above]{$[L_j]$};
\draw [line width=0.5mm, red] (1/2,.866) -- (0,0) -- (1,0); 
\draw [line width=0.5mm, red] (1,0) -- (1/2,.866); 
\draw [line width=0.5mm, blue] (0,0) -- (3/2,3*.866) -- (-1/2,7*.866) -- (-2,.866*4) -- (0,0); 
}
\end{tikzpicture}}
\caption{Left: an apartment containing the edge $([L_2],[L_3])$  and $[L_j]$. Right: an apartment containing the edge $([L_1],[L_2])$  and $[L_j]$. \label{fig:apartments}}
\end{figure}
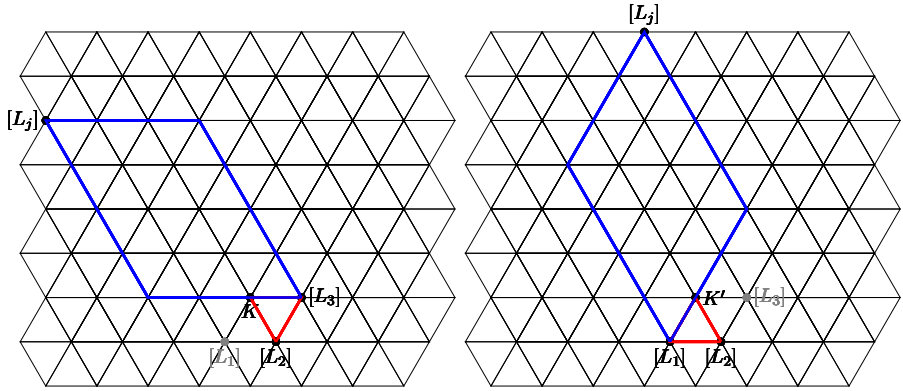
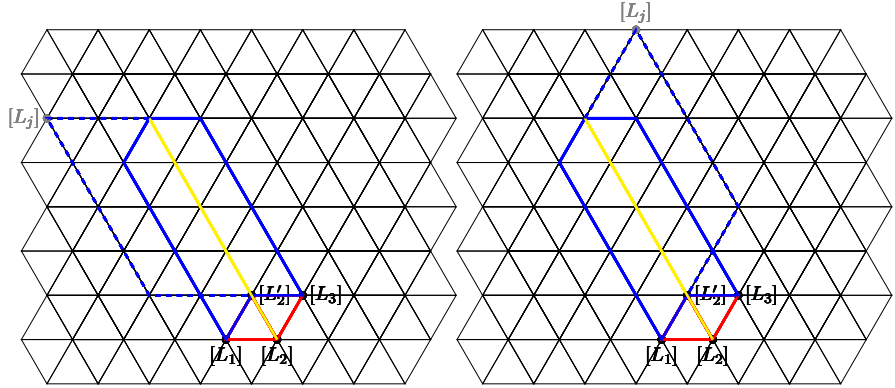
\begin{figure}
\begin{align*}
\scalemath{.68}{
\begin{tikzpicture}
\foreach \j in {0,...,6}{
\foreach \i in {-3,...,3}{
	\ifodd\j{
	\draw (\i+1/2,0.866*\j) -- ++(0:1) -- ++(120:1) -- ++(-120:1);
	\draw (\i+1/2,0.866*\j) -- ++(60:1) -- ++(180:1) -- ++(-60:1);
	\draw (\i+1/2,0.866*\j) -- ++(120:1) -- ++(-120:1) -- ++(0:1);
	\draw (\i+1/2,0.866*\j) -- ++(180:1) -- ++(-60:1) -- ++(60:1);
	\draw (\i+1/2,0.866*\j) -- ++(-120:1) -- ++(0:1) -- ++(120:1);
	\draw (\i+1/2,0.866*\j) -- ++(-60:1) -- ++(60:1) -- ++(180:1);
	}
	\else{
	\draw (\i,0.866*\j) -- ++(0:1) -- ++(120:1) -- ++(-120:1);
	\draw (\i,0.866*\j) -- ++(60:1) -- ++(180:1) -- ++(-60:1);
	\draw (\i,0.866*\j) -- ++(120:1) -- ++(-120:1) -- ++(0:1);
	\draw (\i,0.866*\j) -- ++(180:1) -- ++(-60:1) -- ++(60:1);
	\draw (\i,0.866*\j) -- ++(-120:1) -- ++(0:1) -- ++(120:1);
	\draw (\i,0.866*\j) -- ++(-60:1) -- ++(60:1) -- ++(180:1);
	}\fi
}
\filldraw (0,0) circle(2pt) node[below]{$[L_1]$};
\filldraw (1,0) circle(2pt) node[below]{$[L_2]$} ;
\filldraw (1/2,.866) circle(2pt) node[right]{$[L_2']$};
\filldraw (3/2,.866) circle(2pt) node[right]{$[L_3]$};
\filldraw[gray] (-7/2,0.866*5) circle(2pt) node[left]{$[L_j]$};
\draw [line width=0.5mm, red] (1/2,.866) -- (0,0) -- (1,0); 
\draw [line width=0.5mm, red] (1,0) -- (1/2,.866); 
\draw [line width=0.5mm, red] (1/2,.866) -- (1,0); 
\draw [line width=0.5mm, red] (1,0) -- (3/2,.866) -- (1/2,.866); 
\draw [line width=0.5mm, blue] (-3/2,5*.866) -- (-1/2, 5*.866) -- (3/2,.866) -- (1/2,.866); 
\draw [line width=0.5mm, blue] (-3/2,5*.866) -- (-2, 4*.866)--(0,0) -- (1/2,.866); 
\draw[line width=0.5mm, yellow] (1,0) -- (-3/2,5*.866);
\draw[dashed, line width=.5mm, blue] (-3/2,5*.866) -- (-7/2,5*0.866) -- (-3/2,0.866) -- (1/2,0.866);
}
\end{tikzpicture}}
\scalemath{.68}{
\begin{tikzpicture}
\foreach \j in {0,...,6}{
\foreach \i in {-3,...,3}{
	\ifodd\j{
	\draw (\i+1/2,0.866*\j) -- ++(0:1) -- ++(120:1) -- ++(-120:1);
	\draw (\i+1/2,0.866*\j) -- ++(60:1) -- ++(180:1) -- ++(-60:1);
	\draw (\i+1/2,0.866*\j) -- ++(120:1) -- ++(-120:1) -- ++(0:1);
	\draw (\i+1/2,0.866*\j) -- ++(180:1) -- ++(-60:1) -- ++(60:1);
	\draw (\i+1/2,0.866*\j) -- ++(-120:1) -- ++(0:1) -- ++(120:1);
	\draw (\i+1/2,0.866*\j) -- ++(-60:1) -- ++(60:1) -- ++(180:1);
	}
	\else{
	\draw (\i,0.866*\j) -- ++(0:1) -- ++(120:1) -- ++(-120:1);
	\draw (\i,0.866*\j) -- ++(60:1) -- ++(180:1) -- ++(-60:1);
	\draw (\i,0.866*\j) -- ++(120:1) -- ++(-120:1) -- ++(0:1);
	\draw (\i,0.866*\j) -- ++(180:1) -- ++(-60:1) -- ++(60:1);
	\draw (\i,0.866*\j) -- ++(-120:1) -- ++(0:1) -- ++(120:1);
	\draw (\i,0.866*\j) -- ++(-60:1) -- ++(60:1) -- ++(180:1);
	}\fi
}
\filldraw (0,0) circle(2pt) node[below]{$[L_1]$};
\filldraw (1,0) circle(2pt) node[below]{$[L_2]$} ;
\filldraw (1/2,.866) circle(2pt) node[right]{$[L_2']$};
\filldraw (3/2,.866) circle(2pt) node[right]{$[L_3]$};
\filldraw[gray] (-1/2,7*.866) circle(2pt) node[above]{$[L_j]$};
\draw [line width=0.5mm, red] (1/2,.866) -- (0,0) -- (1,0); 
\draw [line width=0.5mm, red] (1,0) -- (1/2,.866); 
\draw [line width=0.5mm, red] (1/2,.866) -- (1,0); 
\draw [line width=0.5mm, red] (1,0) -- (3/2,.866) -- (1/2,.866); 
\draw [line width=0.5mm, blue] (-3/2,5*.866) -- (-1/2, 5*.866) -- (3/2,.866) -- (1/2,.866); 
\draw [line width=0.5mm, blue] (-3/2,5*.866) -- (-2, 4*.866)--(0,0) -- (1/2,.866); 
\draw[line width=0.5mm, yellow] (1,0) -- (-3/2,5*.866);
\draw[dashed, line width=.5mm, blue] (-3/2,5*.866) -- (-1/2,7*.866) -- (3/2,3*.866) -- (1/2,.866);
}
\end{tikzpicture}}
\end{align*}
\caption{Two views of an apartment containing all four of $[L_1],[L_2],[L_2'],[L_3]$ with $[L_j]$ receding out of the plane of the apartment. \label{fig:flap}}
\end{figure}

Now suppose that $b<j<c-1$. Choose an apartment containing the edge $([L_2],[L_3])$ and $[L_j]$, so that it must also contain the parallelogram of geodesics between $[L_3]$ and $[L_j]$. Since $\gamma_{2,j}=\gamma_{3,j}-\omega_2+\omega_1$, $[L_2]$ is also adjacent to the vertex $K$ on the parallelogram that is an $\omega_2$ step from $[L_3]$ towards $[L_j]$. By choosing an apartment containing the edge $([L_1],[L_2])$ and $[L_j]$, similar reasoning shows that $[L_2]$ is adjacent to the vertex $K'$ on the parallelogram of geodesics from $[L_1]$ to $[L_j]$ that is an $\omega_2$ step from $[L_1]$ towards $[L_j]$. See Figure \ref{fig:apartments}.

But then $K$ and $K'$ are both adjacent to $[L_2]$ in the parallelogram of geodesics from $[L_2]$ to $[L_j]$ and both are an $\omega_1$ step from $[L_2]$ towards $[L_j]$, so $K=~K'$. Since there is a unique vertex adjacent to all three of $[L_1],[L_2],[L_3]$, it follows that $K=[L_2']$. Hence, $[L_2']$ is on a combinatorial geodesic from each of $[L_1],[L_2],[L_3]$ to $[L_j]$, and more specifically $d([L_2'],[L_j])=\gamma_{2,j}-\omega_1=\gamma_{3,j}-\omega_2=\gamma_{1,j}-\omega_2$. See Figure \ref{fig:flap} where all of the solid lines appear in a common apartment, but cannot necessarily be placed in a common apartment with the dashed portion. The dashed portions can be viewed as receding away from the plane (i.e. the yellow line segment can be viewed as a spine with three flaps attached to it). 

Hence, $\dif(\gamma_{1,j},\gamma_{2,j}')=\{1,2\}$ and $\dif(\gamma_{2,j}',\gamma_{3,j})=\{3\}$ for all $b\leq j<c$, so the local condition holds for each of the unit squares across columns with indices in the range $b\leq j<c$. Now consider the unit squares across the columns $b-1$ and $b$. Since $\dif(\gamma_{1,b},\gamma_{2,b}')=\{1,2\}=\dif(\gamma_{1,b-1},\gamma_{2,b-1}')$, the local condition holds for the unit square with weights $\gamma_{1,b-1}$, $\gamma_{1,b}$, $\gamma_{2,b-1}'$, and $\gamma_{2,b}'$. We have that $\gamma_{2,b}=(x,y,y)$ for integers $x>y$, so that $\gamma_{2,b}'=(x-1,y,y)$ and $\gamma_{3,b}=(x-1,y,y-1)$. Recall that $\dif(\gamma_{2,b-1}',\gamma_{3,{b-1}})=\{2\}$. Subtracting~$1$ from the second entry of $\gamma_{2,b}'$ and a sorting yields $\gamma_{3,b}$ as desired.\\

\noindent{\bf Effects on conv:}
Note that for any $i$ such that $b\leq j<c$ either $\conv([L_2],[L_j])$ and $\conv([L_2'],[L_j])$ are both trivial, or $d([L_2],[L_j])=k\omega_1$ and  $d([L_2'],[L_j])=(k-1)\omega_1$ for some $k$, in which case $\conv([L_2],[L_j])=\conv([L_2'],[L_j])\cup \{[L_2]\}$.\\ 

\noindent\underline{\bf Subpath D.} 
Consider the subpath $D$ consisting of vertices $[L_j]$ such that $c\leq j<d$. This region can be empty if $c=d$, which can only happen if $\gamma_{1,c}=(x,x,x)$ for some natural number $x$. If $c<d$, then this case is analogous to the subpath B case with the roles of $[L_1]$ and $[L_3]$ interchanged. We leave the analysis to the reader.\\ 
 \newpage 

\noindent\underline{\bf Subpath E.} 

\noindent{\bf Local Condition:}
Consider the final subpath E that begins with partitions $\gamma_{1,d}=(x,x,y), \gamma_{2,d}=(x,x,y-1), \gamma_{3,d}=(x,x-1,y-1)$ for $x\geq y$. Then $d([L_1],[L_d])=k\omega_2$, $d([L_2],[L_d])=(k+1)\omega_2$, and $d([L_3],[L_d])=(k+1)\omega_2+\omega_1$ for some $k$. For any $j$ such that $d\leq j$, we have $d([L_2],[L_j])=d([L_1],[L_j])+\omega_2$ and $d([L_3],[L_j])=d([L_2],[L_j])+\omega_1$. In this case, all five vertices $[L_1],[L_2],[L_2'],[L_3]$ and $[L_j]$ can be placed in a common apartment giving the configuration on the right of Figure \ref{fig:regions D and E}.

Then replacing $[L_2]$ by $[L_2']$ gives the distance $d([L_2'],[L_j])=d([L_2],[L_j])-\omega_2+\omega_1$, which implies that $\dif(\gamma_{1,j},\gamma_{2,j}')=\{2,3\}$ and $\dif(\gamma_{2,j}',\gamma_{3,j})=\{3\}$. The local rule is satisfied across each unit square, including at index $i=d$.\\

\noindent{\bf Effects on conv:}
Note that $\conv([L_2],[L_j])$ is nontrivial only for indices $j$ such that $\gamma_{2,j}=(k+1)\omega_2$ and $\gamma_{1,j}=k\omega_2$ for some $k$, in which case $\conv([L_2],[L_j])=\conv([L_1],[L_j])\cup\{[L_2]\}$. Similarly, $\conv([L_2'],[L_j])$ is nontrivial only when $d([L_2'],[L_j])=k\omega_1$ and $d([L_1],[L_j])=(k-1)\omega_1$ for some $k$, in which case $\conv([L_2'],[L_j])=\conv([L_1],[L_j])\cup\{[L_2']\}$. \qedhere

\begin{figure}
\begin{align*}
\sloppy
\scalemath{.67}{
\begin{tikzpicture}
\foreach \j in {0,...,4}{
\foreach \i in {-3,...,3}{
	\ifodd\j{
	\draw (\i+1/2,0.866*\j) -- ++(0:1) -- ++(120:1) -- ++(-120:1);
	\draw (\i+1/2,0.866*\j) -- ++(60:1) -- ++(180:1) -- ++(-60:1);
	\draw (\i+1/2,0.866*\j) -- ++(120:1) -- ++(-120:1) -- ++(0:1);
	\draw (\i+1/2,0.866*\j) -- ++(180:1) -- ++(-60:1) -- ++(60:1);
	\draw (\i+1/2,0.866*\j) -- ++(-120:1) -- ++(0:1) -- ++(120:1);
	\draw (\i+1/2,0.866*\j) -- ++(-60:1) -- ++(60:1) -- ++(180:1);
	}
	\else{
	\draw (\i,0.866*\j) -- ++(0:1) -- ++(120:1) -- ++(-120:1);
	\draw (\i,0.866*\j) -- ++(60:1) -- ++(180:1) -- ++(-60:1);
	\draw (\i,0.866*\j) -- ++(120:1) -- ++(-120:1) -- ++(0:1);
	\draw (\i,0.866*\j) -- ++(180:1) -- ++(-60:1) -- ++(60:1);
	\draw (\i,0.866*\j) -- ++(-120:1) -- ++(0:1) -- ++(120:1);
	\draw (\i,0.866*\j) -- ++(-60:1) -- ++(60:1) -- ++(180:1);
	}\fi
}
\filldraw (0,0) circle(2pt) node[below]{$[L_1]$};
\filldraw (1,0) circle(2pt) node[below]{$[L_2]$} ;
\filldraw (1/2,.866) circle(2pt) node[left]{$[L_2']$};
\filldraw (3/2,.866) circle(2pt) node[right]{$[L_3]$};
\filldraw (-3,4*.866) circle(2pt) node[above]{$[L_j]$};
\draw [line width=0.5mm, red] (1,0) -- (3/2,.866) -- (1/2,.866); 
\draw [line width=0.5mm, blue] (1,0) -- (-1,4*.866) -- (-3,4*.866) -- (-1,0) -- (1,0); 
}
\end{tikzpicture}}
\scalemath{.67}{
\begin{tikzpicture}
\foreach \j in {-2,...,2}{
\foreach \i in {-3,...,3}{
	\ifodd\j{
	\draw (\i+1/2,0.866*\j) -- ++(0:1) -- ++(120:1) -- ++(-120:1);
	\draw (\i+1/2,0.866*\j) -- ++(60:1) -- ++(180:1) -- ++(-60:1);
	\draw (\i+1/2,0.866*\j) -- ++(120:1) -- ++(-120:1) -- ++(0:1);
	\draw (\i+1/2,0.866*\j) -- ++(180:1) -- ++(-60:1) -- ++(60:1);
	\draw (\i+1/2,0.866*\j) -- ++(-120:1) -- ++(0:1) -- ++(120:1);
	\draw (\i+1/2,0.866*\j) -- ++(-60:1) -- ++(60:1) -- ++(180:1);
	}
	\else{
	\draw (\i,0.866*\j) -- ++(0:1) -- ++(120:1) -- ++(-120:1);
	\draw (\i,0.866*\j) -- ++(60:1) -- ++(180:1) -- ++(-60:1);
	\draw (\i,0.866*\j) -- ++(120:1) -- ++(-120:1) -- ++(0:1);
	\draw (\i,0.866*\j) -- ++(180:1) -- ++(-60:1) -- ++(60:1);
	\draw (\i,0.866*\j) -- ++(-120:1) -- ++(0:1) -- ++(120:1);
	\draw (\i,0.866*\j) -- ++(-60:1) -- ++(60:1) -- ++(180:1);
	}\fi
}
\filldraw (0,0) circle(2pt) node[below]{$[L_1]$};
\filldraw (1,0) circle(2pt) node[below]{$[L_2]$} ;
\filldraw (1/2,.866) circle(2pt) node[left]{$[L_2']$};
\filldraw (3/2,.866) circle(2pt) node[right]{$[L_3]$};
\filldraw (-4,-2*.866) circle(2pt) node[left]{$[L_j]$};
\draw [line width=0.5mm, red] (1,0) -- (3/2,.866) -- (1/2,.866) -- (1,0); 
\draw [line width=0.5mm, blue] (1,0) -- (-3,0) -- (-4,-2*.866) -- (0,-2*.866) -- (1,0); 
}
\end{tikzpicture}}
\end{align*}
\caption{Left: the configuration for subpath D. Right: the configuration for subpath E.\label{fig:regions D and E}}
\end{figure} \end{proof}

\begin{Thm}
Let $P$ be a generic polygon. Then $\conv(P)$ is a CAT(0) triangulation of $P$ in the affine building.
\end{Thm}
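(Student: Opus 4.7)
The plan is to prove this theorem by induction on $n$, the number of vertices of the generic polygon $P$. The three propositions of this section supply the inductive moves: Propositions \ref{prop:U-turn} and \ref{prop:sharp corner} strictly decrease $n$, while Proposition \ref{prop:elbow move} preserves $n$ but simplifies the type sequence. In every case, the conclusion has the form $\conv(P) = \conv(P') \cup \{[L_2]\}$, so the triangulation of $P$ is built from that of $P'$ by attaching $[L_2]$ back in a controlled way.

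For the base case $n=3$, a generic polygon has pairwise distances forming a valid growth diagram with fundamental-weight edges, which forces $P$ itself to be a $2$-simplex of $\Delta_2$. Hence $\conv(P) = P$ is a single triangle, trivially a CAT(0) triangulated diskoid (no interior vertices). For the inductive step, if $P$ contains a U-turn (resp.\ sharp-corner) configuration at $[L_1], [L_2], [L_3]$, apply Proposition \ref{prop:U-turn} (resp.\ \ref{prop:sharp corner}) to produce a generic polygon $P'$ with one fewer vertex. By the inductive hypothesis $\conv(P')$ is a CAT(0) triangulated diskoid of $P'$; the identity $\conv(P) = \conv(P') \cup \{[L_2]\}$ glues back $[L_2]$ as a boundary vertex incident to the simplex $([L_1], [L_2], [L_3])$ of $\Delta_2$, producing a triangulated diskoid for $P$. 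Because $[L_2]$ is a boundary vertex, no interior degree in $\conv(P')$ decreases, so CAT(0) is preserved.

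If $P$ has neither a U-turn nor a sharp corner, invoke Lemma \ref{lem:double elbow} to locate a double-elbow configuration on consecutive vertices $[L_1], \ldots, [L_a]$. Apply Proposition \ref{prop:elbow move} to obtain $P'$ in which $[L_2]$ is replaced by the unique vertex $[L_2']$ adjacent to $[L_1], [L_2], [L_3]$, yielding $\conv(P) = \conv(P') \cup \{[L_2]\}$. The effect is to shift the initial elbow one position along the double-elbow path. Iterating this move (each time applying Proposition \ref{prop:elbow move} to the new leading elbow) advances the elbow step-by-step toward the terminal elbow at $[L_{a-2}], [L_{a-1}], [L_a]$; after at most $a-3$ applications the propagating elbow collides with the terminal elbow and produces either a sharp-corner or a U-turn configuration in the resulting polygon, reducing to one of the previous cases. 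The vertex $[L_2]$ glues back as a triangle $([L_1], [L_2], [L_2'])$ (both $[L_1]$-$[L_2]$ and $[L_2]$-$[L_2']$ are edges of $\Delta_2$, and $[L_2']$ is also adjacent to $[L_1]$), so the triangulation property is maintained and again $[L_2]$ is a boundary vertex, preserving CAT(0).

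The main obstacle is verifying that the sequence of elbow moves terminates with the creation of a sharp-corner or U-turn configuration: one must exhibit a monovariant on generic polygons (for example, a lexicographic statistic on the growth diagram encoding the position of the unique non-$\omega_2$ step within the double-elbow region) that strictly decreases under each application of Proposition \ref{prop:elbow move}, ensuring finiteness. A secondary subtlety, already addressed implicitly in Proposition \ref{prop:elbow move}, is that after each move the resulting configuration is still generic and contains a valid double-elbow (possibly one vertex shorter), so the reduction is well-posed. Once termination is established, the triangulated-diskoid and CAT(0) properties follow from the inductive hypothesis together with the identity $\conv(P) = \conv(P') \cup \{[L_2]\}$ at every step, and the duality with non-elliptic webs is precisely the content of the CAT(0) condition.
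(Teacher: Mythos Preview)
Your overall plan matches the paper's: induct on $n$, reduce via Propositions \ref{prop:U-turn} and \ref{prop:sharp corner} when a U-turn or sharp corner is present, and otherwise iterate Proposition \ref{prop:elbow move} along a double-elbow until a sharp corner appears. Your termination worry is misplaced, however. By Lemma \ref{lem:double elbow} the entire double-elbow $[L_1],\ldots,[L_a]$ lies in a single apartment, and each elbow move simply slides the leading elbow one step along that apartment toward $[L_a]$. After exactly $a-2$ moves one reaches a sharp corner (never a U-turn) at $[L_{a-2}'],[L_{a-1}'],[L_a]$; no lexicographic monovariant is needed, only the observation that the double-elbow length strictly decreases.

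The genuine gap is in your CAT(0) argument for the elbow step. When $[L_2]$ is glued back onto $\conv(P')$, the vertex $[L_2']$---which was on the boundary of the diskoid $\conv(P')$---becomes an \emph{interior} vertex of $\conv(P)$, because $[L_2]$ is adjacent to all three of $[L_1],[L_2'],[L_3]$ and the new triangles close up the link of $[L_2']$. Your assertion that ``no interior degree in $\conv(P')$ decreases'' is true but says nothing about this \emph{new} interior vertex; you have not shown its degree is at least $6$. The paper fills this gap with a one-line appeal to the ambient building: the link of any vertex in $\Delta_2$ is a spherical $A_2$ building, which has girth $6$, so every cycle in such a link has length at least $6$. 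Consequently, an interior vertex of the induced subcomplex $\conv(P)$ cannot have fewer than $6$ neighbors, and CAT(0) follows immediately.
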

\begin{proof}
Induct on $n$, the number of vertices in $P$. If $P$ contains a $U$-turn at vertex $i$, then the polygon $P'$ with vertex $i$ removed and the $i-1$, $i+1$ vertices identified is generic by Proposition \ref{prop:U-turn}. By induction $\conv(P')$ is a CAT(0) triangulation of $P'$ in the building. Adding $[L_i]$ back in gives $\conv(P)=\conv(P')\cup\{L_i\}$ by the second part of Proposition \ref{prop:U-turn}, and is a CAT(0) triangulation of $P$.  If $P$ contains a sharp corner at vertex $i$, then similarly apply Proposition \ref{prop:sharp corner} to show that $\conv(P)=\conv(P')\cup\{L_i\}$, which is a CAT(0) triangulation of $P$ in the building. 

Now suppose that $P$ has no U-turns and no sharp corners. By Lemma \ref{lem:double elbow}, $P$ must contain a double-elbow configuration of some length $a$, which we may assume occurs at $[L_1],[L_2],[L_3],\ldots,[L_a]$. Let $[L_2']$ be the unique vertex in the building other than $[L_2]$ that is adjacent to both $[L_1]$ and $[L_3]$. Then by Proposition \ref{prop:elbow move} the polygon with $[L_2]$ replaced by $[L_2']$ is generic, but has the same number of vertices. It now has a double-elbow configuration at $[L_2'],\ldots,[L_a]$. After $a-2$ applications of Proposition \ref{prop:elbow move}, the new generic polygon has a sharp corner at $[L_{a-2}'],[L_{a-1}'],[L_a]$. Let $P'$ be the generic polygon with this sharp corner removed, so that by induction $\conv(P')$ is a CAT(0) triangulation of $P'$. Then $\conv(P)=\conv(P')\cup\{[L_2],[L_3],\ldots,[L_{a-1}]\}$ by repeated application of Proposition \ref{prop:elbow move}.

We claim that the resulting set $\conv(P)$ is a CAT(0) triangulation. If it were not, then an exterior vertex of $\conv(P')$ became an interior vertex when passing to $\conv(P)$, whose link contains fewer than $6$ edges. However, this is impossible in the affine building, hence $\conv(P)$ is a CAT(0) triangulation.
\end{proof}

The proof of the main theorem now follows easily.

\begin{Thm}\label{thm:main}
Let $P_1,\ldots,P_d$ be $d$ generic polygons, one from each of the components of the polygon space $\Poly(\vec\lambda)$. Then the duals of $\conv(P_1),\ldots,\conv(P_d)$ form the non-elliptic web basis for $\Inv(\vec\lambda)$.
\end{Thm}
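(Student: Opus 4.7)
The plan is to combine the preceding theorem (which provides the geometric content) with the enumerative facts already collected in the introduction. By the preceding theorem, for each component representative $P_i$ the induced subcomplex $\conv(P_i)$ is a CAT(0) triangulated diskoid in $\Delta_2$ whose boundary is a $\vec\lambda$-labeled $n$-gon. By the duality of \cite{FKK} between CAT(0) triangulated diskoids and non-elliptic webs, dualizing $\conv(P_i)$ produces a non-elliptic web of type $\vec\lambda$. So the map $P_i \mapsto \conv(P_i)^\vee$ lands in the set of non-elliptic webs of type $\vec\lambda$, which by Kuperberg form a basis of $\Inv(\vec\lambda)$.

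The next step is to show this assignment is injective. By Theorem \ref{Thm:components}, distinct components of $\Poly(\vec\lambda)$ correspond to distinct cylindrical growth diagrams, i.e. a generic polygon in each component has a distinct system of pairwise distances $\gamma_{i,j} = d([L_i],[L_j])$. I would argue these pairwise distances can be recovered from the edge-labeled triangulated diskoid $\conv(P_i)$: each edge of the diskoid carries a direction that records an $\omega_1$- or $\omega_2$-step in the building, and any dominant-weight-valued distance $d([L_i],[L_j])$ between two boundary vertices is read off from a path in the diskoid, as discussed in \S\ref{sec:webs}. Hence the diskoid, and \emph{a fortiori} its dual non-elliptic web, determines the growth diagram and therefore the component.

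A cardinality argument then finishes the proof. By Theorem \ref{Thm:Satake}, the number $d$ of components of $\Poly(\vec\lambda)$ equals $\dim \Inv(\vec\lambda)$, and by \cite{Kup} the number of non-elliptic webs of type $\vec\lambda$ is also $\dim \Inv(\vec\lambda)$. Since an injection between finite sets of the same cardinality is a bijection, the duals of $\conv(P_1),\ldots,\conv(P_d)$ exhaust the non-elliptic webs of type $\vec\lambda$, and therefore form the non-elliptic web basis of $\Inv(\vec\lambda)$.

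The main subtlety, and essentially the only non-formal step, is the injectivity argument: one must confirm that the triangulated diskoid (or equivalently its dual web) retains enough information about the pairwise boundary distances $\gamma_{i,j}$ to reconstruct the growth diagram. Everything else is bookkeeping against results already in place: the preceding theorem, the FKK duality, Kuperberg's count of non-elliptic webs, and geometric Satake.
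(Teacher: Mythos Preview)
Your approach is essentially the same as the paper's: use the preceding theorem to produce non-elliptic webs, argue injectivity by recovering the growth diagram from the diskoid, and conclude by a cardinality match. The paper is slightly more precise on exactly the point you flag as subtle. Where you say the distances $d([L_i],[L_j])$ are ``read off from a path in the diskoid,'' the paper supplies the needed fact: by induction along the same reductions used in Propositions~\ref{prop:U-turn}, \ref{prop:sharp corner}, and \ref{prop:elbow move}, each $\conv(P_k)$ actually \emph{contains a combinatorial geodesic} between any pair of boundary vertices. That is what guarantees the \emph{minimal} path in the abstract diskoid has dominant-weight-valued length exactly $\gamma_{i,j}$, so the growth diagram---and hence the component---is determined by the diskoid. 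Your reference to \S\ref{sec:webs} only defines the length of a path, not that some path in the diskoid realizes the building distance; you should state and justify the geodesic-containment claim explicitly. With that one sentence sharpened, your argument matches the paper's.
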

\begin{proof}
Each $\conv(P_k)$ is a CAT(0) diskoid by the previous theorem, so corresponds to a non-elliptic web. By induction and similar arguments as in the preceding propositions, one can show that $\conv(P_k)$ contains a combinatorial geodesic between any pair of vertices. Since the $P_k$ are contained in different components of $\Poly(\vec\lambda)$, any two $P_k$ differ in at least one distance $d([L_i],[L_j])$ for some $i$, $j$. Hence the $\conv(P_k)$ are distinct.
\end{proof}

\addtocontents{toc}{\protect\setcounter{tocdepth}{2}} 

\bibliographystyle{amsalpha}
\bibliography{biblio}

\end{document}